\documentclass[a4paper,11pt]{amsart}
\usepackage{mathrsfs}
\usepackage{cases}
\usepackage{amsfonts}
\usepackage{graphicx}
\usepackage{amsmath}
\usepackage{amssymb}
\usepackage[all]{xypic}
\usepackage[all]{xy}
\usepackage{color}
\usepackage{colordvi}
\usepackage{multicol}
\usepackage[linktocpage=true]{hyperref}
\hypersetup{colorlinks,linkcolor=blue,urlcolor=cyan,citecolor=blue}

\textheight 22.8cm \oddsidemargin 0cm \evensidemargin 0cm \textwidth 16.3cm
\begin{document}
\input xy
\xyoption{all}

\numberwithin{equation}{section}
\allowdisplaybreaks
\renewcommand{\mod}{\operatorname{mod}\nolimits}
\newcommand{\proj}{\operatorname{proj.}\nolimits}
\newcommand{\rad}{\operatorname{rad}\nolimits}
\newcommand{\soc}{\operatorname{soc}\nolimits}
\newcommand{\ind}{\operatorname{inj.dim}\nolimits}
\newcommand{\id}{\operatorname{id}\nolimits}
\newcommand{\Mod}{\operatorname{Mod}\nolimits}
\newcommand{\R}{\operatorname{R}\nolimits}
\newcommand{\End}{\operatorname{End}\nolimits}
\newcommand{\ob}{\operatorname{Ob}\nolimits}
\newcommand{\Ht}{\operatorname{Ht}\nolimits}
\newcommand{\cone}{\operatorname{cone}\nolimits}
\newcommand{\rep}{\operatorname{rep}\nolimits}
\newcommand{\Ext}{\operatorname{Ext}\nolimits}
\newcommand{\Tor}{\operatorname{Tor}\nolimits}
\newcommand{\Hom}{\operatorname{Hom}\nolimits}
\newcommand{\Pic}{\operatorname{Pic}\nolimits}
\newcommand{\aut}{\operatorname{Aut}\nolimits}
\newcommand{\Fac}{\operatorname{Fac}\nolimits}
\newcommand{\Div}{\operatorname{Div}\nolimits}
\newcommand{\rank}{\operatorname{rank}\nolimits}
\newcommand{\Len}{\operatorname{Length}\nolimits}
\newcommand{\RHom}{\operatorname{RHom}\nolimits}
\renewcommand{\deg}{\operatorname{deg}\nolimits}
\renewcommand{\Im}{\operatorname{Im}\nolimits}
\newcommand{\Ker}{\operatorname{ker}\nolimits}
\newcommand{\Iso}{\operatorname{Iso}\nolimits}
\newcommand{\Coh}{\operatorname{coh}\nolimits}
\newcommand{\Qcoh}{\operatorname{Qch}\nolimits}
\newcommand{\inj}{\operatorname{inj.dim}\nolimits}

\newcommand{\Cp}{\operatorname{Cp}\nolimits}
\newcommand{\coker}{\operatorname{Coker}\nolimits}
\renewcommand{\dim}{\operatorname{dim}\nolimits}
\renewcommand{\ker}{\operatorname{Ker}\nolimits}
\renewcommand{\div}{\operatorname{div}\nolimits}
\newcommand{\Ab}{{\operatorname{Ab}\nolimits}}
\newcommand{\Cone}{{\operatorname{Cone}\nolimits}}
\renewcommand{\Vec}{{\operatorname{Vec}\nolimits}}
\newcommand{\pd}{\operatorname{proj.dim}\nolimits}
\newcommand{\sdim}{\operatorname{sdim}\nolimits}
\newcommand{\add}{\operatorname{add}\nolimits}
\newcommand{\pr}{\operatorname{pr}\nolimits}
\newcommand{\oR}{\operatorname{R}\nolimits}
\newcommand{\oL}{\operatorname{L}\nolimits}
\newcommand{\Perf}{{\mathfrak Perf}}
\newcommand{\cc}{{\mathcal C}}
\newcommand{\ce}{{\mathcal E}}
\newcommand{\cs}{{\mathcal S}}
\newcommand{\cf}{{\mathcal F}}
\newcommand{\cx}{{\mathcal X}}
\newcommand{\cy}{{\mathcal Y}}
\newcommand{\cl}{{\mathcal L}}
\newcommand{\ct}{{\mathcal T}}
\newcommand{\cu}{{\mathcal U}}
\newcommand{\cm}{{\mathcal M}}
\newcommand{\cv}{{\mathcal V}}
\newcommand{\ch}{{\mathcal H}}
\newcommand{\ca}{{\mathcal A}}
\newcommand{\mcr}{{\mathcal R}}
\newcommand{\cb}{{\mathcal B}}
\newcommand{\ci}{{\mathcal I}}
\newcommand{\cj}{{\mathcal J}}
\newcommand{\cp}{{\mathcal P}}
\newcommand{\cg}{{\mathcal G}}
\newcommand{\cw}{{\mathcal W}}
\newcommand{\co}{{\mathcal O}}
\newcommand{\cd}{{\mathcal D}}
\newcommand{\ck}{{\mathcal K}}
\newcommand{\calr}{{\mathcal R}}

\def \fg{{\mathfrak g}}
\newcommand{\ol}{\overline}
\newcommand{\ul}{\underline}
\newcommand{\cz}{{\mathcal Z}}
\newcommand{\st}{[1]}
\newcommand{\ow}{\widetilde}
\renewcommand{\P}{\mathbf{P}}
\newcommand{\pic}{\operatorname{Pic}\nolimits}
\newcommand{\Spec}{\operatorname{Spec}\nolimits}
\newtheorem{theorem}{Theorem}[section]
\newtheorem{acknowledgement}[theorem]{Acknowledgement}
\newtheorem{algorithm}[theorem]{Algorithm}
\newtheorem{axiom}[theorem]{Axiom}
\newtheorem{case}[theorem]{Case}
\newtheorem{claim}[theorem]{Claim}
\newtheorem{conclusion}[theorem]{Conclusion}
\newtheorem{condition}[theorem]{Condition}
\newtheorem{conjecture}[theorem]{Conjecture}
\newtheorem{construction}[theorem]{Construction}
\newtheorem{corollary}[theorem]{Corollary}
\newtheorem{criterion}[theorem]{Criterion}
\newtheorem{definition}[theorem]{Definition}
\newtheorem{example}[theorem]{Example}
\newtheorem{exercise}[theorem]{Exercise}
\newtheorem{lemma}[theorem]{Lemma}
\newtheorem{notation}[theorem]{Notation}
\newtheorem{problem}[theorem]{Problem}
\newtheorem{proposition}[theorem]{Proposition}
\newtheorem{solution}[theorem]{Solution}
\newtheorem{summary}[theorem]{Summary}
\newtheorem*{thm}{Theorem}
\newcommand{\qbinom}[2]{\begin{bmatrix} #1\\#2 \end{bmatrix} }

\theoremstyle{remark}
\newtheorem{remark}[theorem]{Remark}

\def \bp{{\mathbf p}}
\def \bA{{\mathbf A}}
\def \bL{{\mathbf L}}
\def \bF{{\mathbf F}}
\def \bS{{\mathbf S}}
\def \bC{{\mathbf C}}
\def \bD{{\mathbf D}}

\def \Z{{\Bbb Z}}
\def \F{{\Bbb F}}
\def \C{{\Bbb C}}
\def \N{{\Bbb N}}
\def \Q{{\Bbb Q}}
\def \G{{\Bbb G}}
\def \X{{\Bbb X}}
\def \P{{\Bbb P}}
\def \K{{\Bbb K}}
\def \E{{\Bbb E}}
\def \A{{\Bbb A}}
\def \BH{{\Bbb H}}
\def \T{{\Bbb T}}
\newcommand{\bluetext}[1]{\textcolor{blue}{#1}}
\newcommand{\redtext}[1]{\textcolor{red}{#1}}
\newcommand{\red}[1]{\redtext{ #1}}
\newcommand{\blue}[1]{\bluetext{ #1}}

\title[Semi-derived Ringel-Hall algebras and Drinfeld double]{Semi-derived Ringel-Hall algebras and Drinfeld double}

\author[M. Lu]{Ming Lu}
\address{Department of Mathematics, Sichuan University, Chengdu 610064, P.R.China}
\email{luming@scu.edu.cn}
\author[L. Peng]{Liangang Peng}
\address{Department of Mathematics, Sichuan University, Chengdu 610064, P.R.China}
\email{penglg@scu.edu.cn}

\subjclass[2000]{18E10,16W30,17B37}
\keywords{Bridgeland's Hall algebras, Drinfeld double, Hereditary abelian categories, Semi-derived Ringel-Hall algebras.}
\thanks{Both authors are supported by the National Natural Science Foundations of China with Grant Nos. 11401401 and 11821001, respectively}

\begin{abstract}
Let $\ca$ be an arbitrary hereditary abelian category that may not have enough projective objects. For example, $\ca$ can be the category of finite-dimensional representations of a quiver or the category of coherent sheaves on a smooth projective curve or on a weighted projective line. Inspired by the works of Bridgeland and Gorsky, we define the semi-derived Ringel-Hall algebra of $\ca$,  denoted by $\cs\cd\ch_{\Z/2}(\ca)$, to be the localization of a quotient algebra of the Ringel-Hall algebra of the category of $\Z/2$-graded complexes over $\ca$.

We obtain the following three main results. The semi-derived Ringel-Hall algebra has a natural basis. %is a free module over a quantum torus of acyclic complexes, with a basis given by the isomorphism classes of objects in the derived category of $\Z/2$-graded complexes.
A twisted version of the semi-derived Ringel-Hall algebra of $\ca$ is isomorphic to the Drinfeld double of the twisted extended Ringel-Hall algebra $\ch_{tw}^e(\ca)$ of $\ca$. If $\ca$ has a tilting object $T$, then its semi-derived Ringel-Hall algebra is isomorphic to the $\Z/2$-graded semi-derived Hall algebra $\cs\cd\ch_{\Z/2}(\add T)$ of the exact category $\add T$ defined by Gorsky,  and so is isomorphic to Bridgeland's Hall algebra of $\mod (\End(T)^{op})$.
\end{abstract}

\maketitle
 \tableofcontents
\section{Introduction}
The Hall algebra (also called Ringel-Hall algebra) of an associative algebra $A$ (or more generally, of an exact category) over a finite field $\K=\F_q$ is, by definition, an associative algebra over $\Q$ (or $\C$) with a basis indexed by the isomorphism classes of finite-dimensional $A$-modules. The structure coefficients, known as Hall numbers, count in a way the numbers of short exact sequences in $\mod A$. % of the form $0\longrightarrow L\longrightarrow M\longrightarrow N\longrightarrow0$.
For $A$ hereditary and of finite representation type, a celebrated result by C.M. Ringel \cite{R2} shows that the twisted Ringel-Hall algebra of $A$ is isomorphic to the positive part of the corresponding quantum group. This result was generalized to the Kac-Moody type by J.A. Green \cite{Gr}. It means that the positive part of a quantum group %\red{and, moreover, of a quantum Kac-Moody algebra can be described in terms of the module category of a hereditary algebra.}
and, moreover, of a Kac-Moody Lie algebra can be described in terms of the module category of a hereditary algebra.

There have been many attempts to construct the Ringel-Hall algebras from suitable categories to realize the whole quantum groups or the whole  Kac-Moody Lie algebras. %, not only the positive parts.
Among them, three typical constructions should be mentioned. The first one was that of the Drinfeld double of the twisted extended Ringel-Hall algebras from  hereditary categories to realize the whole quantum groups (see \cite{Gr,X,SV}). The second one was that of  the  Ringel-Hall Lie algebras from root categories (see \cite{PX1,PX2,Hub}) to realize the Kac-Moody Lie algebras. The third one was that of the derived Ringel-Hall algebras from derived categories or more general triangulated categories with certain conditions (see \cite{T,XX}, for a different version see \cite{Kap}). However, the first two constructions are not so perfect, since each of them is built of  two natural pieces and the multiplication between the two pieces has to be put in by hand. In addition, the relationship between derived Ringel-Hall algebras and quantum groups is also unclear.

Recently, T. Bridgeland \cite{Br} found an ingenious way to solve this problem. He considered the Hall algebra of the exact category of $\Z/2$-graded complexes with projective components over an abelian category which has enough projective objects, and then localized it at the isomorphism classes of acyclic complexes. When the abelian category is hereditary, this localized Hall algebra can be used to realize the whole quantum group. %In order to generalise Bridgeland's construction to more general exact categories with enough projective objects,

Inspired by Bridgeland's construction, M. Gorsky \cite{Gor13} constructed the so-called semi-derived Hall algebra of $\Z$-graded or $\Z/2$-graded complexes over an exact category satisfying certain finiteness conditions. In particular, his $\Z/2$-graded version requires the condition that the exact category has enough projective objects. He proved that in this case, the semi-derived Hall algebra of  $\Z/2$-graded complexes is isomorphic to the corresponding Bridgeland's Hall algebra. Gorsky \cite{Gor18} also defined a version of semi-derived Hall algebras for Frobenius categories. % Bridgeland's construction.

The multiplicative structure  of the semi-derived Hall algebra in \cite{Gor13} is slightly similar to the usual Ringel-Hall multiplication but not completely. However, when the exact category is hereditary and has enough projective objects, Gorsky \cite{Gor13} proved that the multiplication of the semi-derived Hall algebra is the same as the usual Hall product, and the semi-derived Hall algebra is isomorphic to a certain quotient of the Ringel-Hall algebra of  complexes localized at the  isomorphism classes of acyclic complexes.

Note that there are many hereditary abelian categories without enough projective objects, whose Ringel-Hall algebras also have deep connections with quantum groups. For example, for the category of nilpotent finite-dimensional representations of a quiver with loops, the Ringel-Hall algebra is used by S.-J. Kang and O. Schiffmann \cite{KS} to realize one  half of a quantum generalized Kac-Moody algebra \cite{Bor,K}; for the category of coherent sheaves on a smooth projective curve or on a weighted projective line, the Ringel-Hall algebra or its Drinfeld double realizes Drinfeld's presentation of the quantum loop algebra (see \cite{Sch,Sch2,BS1,DJX,BS}).

In this paper, inspired by the works of Bridgeland and Gorsky on constructing Ringel-Hall algebras from the category of $\Z/2$-graded complexes and especially by the aforementioned result of Gorsky in \cite{Gor13}, we consider arbitrary hereditary abelian categories, which may not have enough projective objects. %In particular, two important kinds of hereditary abelian categories are included. One is the category of finite-dimensional nilpotent representations of a quiver (not necessarily acyclic). The other one is the category of coherent sheaves on a smooth projective curve or on a weighted projective line. For the first case, it includes quivers with loops, its Ringel-Hall algebra can be used to realize the Borcheds-
%For the last case, the Ringel-Hall algebra or the Drinfeld double Ringel-Hall algebra is also related to some quantum group, for example, used to realize the Drinfeld's presentation of quantum loop algebra, see \cite{Sch,Sch2,BS1,DJX,BS}.
Our main results are as follows.

Assume that $\ca$ is a hereditary abelian category. After obtaining some homological properties of $\Z/2$-graded acyclic complexes in Section \ref{sec2}, we define in Section \ref{sec:semi} the so-called semi-derived Ringel-Hall algebra of $\ca$, denoted by $\cs\cd\ch_{\Z/2}(\ca)$ or $\cs\cd\ch(\ca)$, to be the localization of a quotient algebra of the Ringel-Hall algebra of $\cc_{\Z/2}(\ca)$, the category of $\Z/2$-graded complexes over $\ca$. We prove that the underlying vector space of such a Ringel-Hall algebra has a natural basis. As a consequence, $\cs\cd\ch(\ca)$ is a free module over a suitably defined quantum torus of acyclic complexes, with a basis given by the isomorphism classes of objects in the derived category of $\Z/2$-graded complexes ({see Theorem~ \ref{theorem basis of modified hall algebra}, Corollary~ \ref{corollary basis of semi-derived hall algebra2})}. This structure is similar to Gorsky's semi-derived Hall algebra. In addition, we also obtain another basis of $\cs\cd\ch(\ca)$, which provides a triangular decomposition of  $\cs\cd\ch(\ca)$
({see Theorem \ref{lemma basis of semi-derived hall algebra of A}).

In Section \ref{sec:Drinfeld}, we prove that the twisted version of $\cs\cd\ch(\ca)$ is isomorphic to the Drinfeld double of $\ch_{tw}^e(\ca)$, where $\ch_{tw}^e(\ca)$ denotes the twisted extended Ringel-Hall algebra of $\ca$ ({see Theorem \ref{theorem semi-derived hall algebra isomorphic to Drinfeld double}}). As a consequence, we obtain that the structure of the Drinfeld double of  $\ch_{tw}^e(\ca)$ can de defined on the tensor product $\ch_{tw}^e(\ca)\otimes \ch_{tw}^e(\ca)$ (instead of its completion). % is naturally an associative algebra for any hereditary abelian category $\ca$.
This result generalizes the corresponding results of \cite{BS1,Cr}.
Recently, S. Yanagida \cite{Y} proved the same result for Bridgeland's Hall algebra under the assumption that $\ca$ has enough projective objects.
Our result is more general and it shows that in particular for the category of finite-dimensional representations of a quiver possibly with oriented cycles and for the category of coherent sheaves on a smooth projective curve or on a weighted projective line, their twisted semi-derived Ringel-Hall algebras are isomorphic to the corresponding Drinfeld double of the twisted extended Ringel-Hall algebras.

In Section \ref{sec:tilting}, if $\ca$ has a tilting object $T$, then its semi-derived Ringel-Hall algebra is shown to be isomorphic to the $\Z/2$-graded semi-derived Hall algebra %$\cs\cd\ch(\add T)$
of the exact category $\add T$ defined by Gorsky, and so isomorphic to Bridgeland's Hall algebra of $\mod (\End(T)^{op})$ (see Theorem~ \ref{proposition isomorphism of semi derived hall algebras }, Corollary~ \ref{corollary isomorphic of hall algebras}).

%Let us make some remarks about the terminology here. The semi-derived Ringel-Hall algebras defined in Denition 3.5 were called the (Z/2-graded) modified Ringel-Hall algebra in a previous version of this paper (available in arXiv; see [LuP16]) to emphasize the difference between our definition and Gorsky's. However, as pointed out by some experts, that terminology may cause a bit confusion with the modified quantum groups ( see [Lus93]). On the other hand,
%as mentioned above, our defining algebra in this paper and Gorsky's   in [Gor13] have similar construction. So we have changed the name and notation to \the semi-derived Ringel-Hall algebra SDHZ2(A).

Let us make some remarks about the terminology here. The semi-derived Ringel-Hall algebras defined in Definition \ref{definition of the modified Ringel-hall algebra} were called the ($\Z/2$-graded) {\em modified Ringel-Hall algebras} in a previous version of this paper (available on arXiv; see \cite{LuP}) to emphasize the difference between our definition and Gorsky's. %, and denoted by $\cm\ch_{\Z/2}(\ca)$ there.
%In the previous version \cite{LuP}, for a hereditary abelian category $\ca$, the semi-derived Ringel-Hall algebra defined in Definition \ref{definition of the modified Ringel-hall algebra} was called the ($\Z/2$-graded) {\em modified Ringel-Hall algebra}, and denoted by $\cm\ch_{\Z/2}(\ca)$.
   %%%The name ``modified Ringel-Hall algebras'' in \cite{LuP} is to emphasis our construction is different to Gorksy's.
   However, as pointed out by some experts, that terminology may cause some  confusion with the {\em modified quantum groups} (see \cite{Lu}). %Since the subject of Ringel-Hall algebras is strongly connected to this terminology could be misleading as there is no connection between this new version of Ringel-Hall algebras with the {\em modified quantum groups}; see Lusztig \cite{Lu}.
%Since the subject of Ringel-Hall algebras is strongly connected to that of quantum groups,
%this might convince the reader that the algebras considered in the paper categorify (in the quiver case) the algebras
%$\dot{\bf U}$, which is not the case.
%Considering that it is proved in Subsection \ref{subsec:isomodules} that our defined algebra is a free module over a suitably defined quantum torus of acyclic complexes, with a basis given by the isomorphism classes of objects in the derived category of $\Z/2$-graded complexes, and the same underlying vector space was used by Gorsky \cite{Gor13} to define $\Z/2$-graded semi-derived Hall algebra of $\ca$ in case $\ca$ has enough projective objects,
On the other hand, as mentioned above, the algebra we define in this paper and Gorsky's  in \cite{Gor13} have similar structures. %if $\ca$ is a hereditary abelian category with enough projective objects, the version of Ringel-Hall algebra in this paper is isomorphic to the semi-derived Hall algebra defined by Gorsky in \cite{Gor13}.
So we have changed the name and notation to  the semi-derived Ringel-Hall algebra $\cs\cd\ch_{\Z/2}(\ca)$. %\red{following Gorsky}.

%The semi-derived Ringel-Hall algebras have been defined for
%\red{Finally, let us remark that in a similar way one can formulate a $\Z$-graded complex version of the semi-derived Ringel-Hall algebra, which shares many of favorable properties and structures as above. There are some applications of the construction of the semi-derived Ringel-Hall algebras. For example, J. Lin and the second author in \cite{LinP19} considered the relations between the $\Z$-graded semi-derived Ringel-Hall algebras and the derived Hall algebras, and reproved Green's formula; W. Wang and the first author in \cite{LW19} used a version of semi-derived Ringel-Hall algebras for $1$-Gorenstein algebras to realize the $\imath$quantum groups.
%}

The construction of semi-derived Ringel-Hall algebras in this paper can be adapted to other categories. %, and several applications have been found recently. %some of which have been studied recently.
For example, J.~ Lin and the second author \cite{LinP19} studied the relations between the $\Z$-graded semi-derived Ringel-Hall algebras and the derived Hall algebras; W.~ Wang and the first author \cite{LW19} used a version of semi-derived Ringel-Hall algebras for $1$-Gorenstein algebras to realize the $\imath$quantum groups arising from quantum symmetric pairs.

\vspace{2mm}
\noindent{\bf Acknowledgments.}
We thank the referee for his/her very careful reading, very helpful comments and suggestions, which greatly improved the exposition of this paper.

%%%%%%%%%%%%%%%%%%%%
\section{$\Z/2$-graded complexes and homological properties of acyclic complexes}
\label{sec2}
%%%%%%%%%%%%%%%%%%
\subsection{Categories of $\Z/2$-graded complexes}
%%%%%%%%%%%%%%%%%%%%
We assume that $\ce$  is an exact category in this subsection. For the basics of exact categories, we refer to \cite{Q,Buh,Ke1}.

Let $\cc_{\Z/2}(\ce)$ be the exact category of $\Z/2$-graded complexes over $\ce$. Namely, an \emph{object} $M$ of this category is a diagram with objects and morphisms in $\ce$:
$$\xymatrix{ M^0 \ar@<0.5ex>[r]^{d^0}& M^1 \ar@<0.5ex>[l]^{d^1}  },\quad d^1d^0=d^0d^1=0.$$
All indices of components of $\Z/2$-graded objects will be understood modulo $2$.
A \emph{morphism} $s=(s^0,s^1):M\rightarrow N$ is a diagram
\[\xymatrix{  M^0 \ar@<0.5ex>[r]^{d^0} \ar[d]^{s^0}& M^1 \ar@<0.5ex>[l]^{d^1} \ar[d]^{s^1} \\
 N^0 \ar@<0.5ex>[r]^{e^0}& N^1 \ar@<0.5ex>[l]^{e^1} }  \]
with $s^{i+1}d^i=e^is^i$.

The shift functor on complexes is an involution
\begin{align}
\label{inv}
\xymatrix{\cc_{\Z/2}(\ce) \ar[r]^{*} & \cc_{\Z/2}(\ce)\ar[l],}
\end{align}
which shifts the grading and changes the sign of the differential as follows:
$$\xymatrix{ M^0 \ar@<0.5ex>[r]^{d^0}& M^1 \ar@<0.5ex>[l]^{d^1} \ar[r]^{*} & M^1 \ar@<0.5ex>[r]^{-d^1} \ar[l]& M^0 \ar@<0.5ex>[l]^{-d^0}  }.$$

For any object $X\in\ce$, we define
\begin{align*}
K_X:=&(\xymatrix{ X \ar@<0.5ex>[r]^{1}& X \ar@<0.5ex>[l]^{0}  }),\qquad \,\, K_X^*:=(\xymatrix{ X \ar@<0.5ex>[r]^{0}& X \ar@<0.5ex>[l]^{1}  }),
\\
C_X:=&(\xymatrix{ 0 \ar@<0.5ex>[r]& X \ar@<0.5ex>[l]  }),\qquad \quad C_X^*:=(\xymatrix{ X\ar@<0.5ex>[r]& 0 \ar@<0.5ex>[l]  })
\end{align*}
in $\cc_{\Z/2}(\ce)$.

Denote by $\cc^b(\ce)$ the category of bounded complexes over $\ce$. Then there exists a covering functor
$$\pi:\cc^b(\ce)\longrightarrow \cc_{\Z/2}(\ce),$$
which sends a complex $(M^i)_{i\in \Z}$ to the $\Z/2$-graded complex
$$\xymatrix{\bigoplus\limits_{i\in\Z}M^{2i}  \ar@<0.5ex>[r]& \bigoplus\limits_{i\in\Z}M^{2i+1} \ar@<0.5ex>[l]  }$$
with the naturally defined differentials. Note that $\pi$ is an exact functor.

%%%%%%%%%%%%
\subsection{Homological dimensions of acyclic complexes}
%%%%%%%%%%%%

Let $\cb$ be an exact category. For any $B\in\cb$, its {\em projective dimension} (denoted by $\pd_\cb (B)$) is defined to be the smallest number $i\in\N$ such that
$\Ext^{i+1}_{\cb}(B,-)=0$; dually one can define its {\em injective dimension} (denoted by $\inj_\cb (B)$). We say that $\cb$ is {\em hereditary} if $\pd_\cb (B)\leq 1$ and $\inj_\cb(B)\leq1$ for any $B\in\cb$.

Let $\ca$ be a hereditary abelian category, and $\cc_{\Z/2}(\ca)$ the category of $\Z/2$-graded complexes over $\ca$. %For a complex $K\in\cc_{\Z/2}(\ca)$, the {\em projective dimension }of $K$ is defined to be the smallest number $i\in\N$ such that
%$\Ext^{i+1}_{\cc_{\Z/2}(\ca)}(K,-)=0$; dually one can define its {\em injective dimension}.
In this subsection, we shall prove that the projective and injective dimensions of any acyclic complexes in $\cc_{\Z/2}(\ca)$ are at most one. %Before that, we give some lemmas.

\begin{lemma}
\label{Galois functor is dense for hereditary categories}
Let $\ca$ be a hereditary abelian category and $\pi:\cc^b(\ca)\rightarrow \cc_{\Z/2}(\ca)$ the covering functor. For any $M=\xymatrix{ M^0 \ar@<0.5ex>[r]^{f^0}& M^1 \ar@<0.5ex>[l]^{f^1}  }\in\cc_{\Z/2}(\ca)$,
there exist two short exact sequences of $\Z/2$-graded complexes:
\begin{align}
0\longrightarrow K_{\ker f^0} \longrightarrow \pi(U)\longrightarrow M\longrightarrow0 \text{ and } 0\longrightarrow M{\longrightarrow}  \pi(V)\longrightarrow K_{\coker f^0}\longrightarrow0
\end{align}
with $U,V\in\cc^b(\ca)$. Moreover, if $M$ is acyclic, then $U,V$ are acyclic complexes.
\end{lemma}
\begin{proof}
The proof is based on \cite[Propostion 3.2]{LP}. %, and we give a proof here for convenience.

%Let $M=\xymatrix{M^0\ar@<0.5ex>[r]^{f^0} &M^1\ar@<0.5ex>[l]^{f^1}}$.
Denote by $i^0:\ker f^0{\rightarrow} M^0$ the kernel of $f^0$ and by ${p^1}:M^1{\rightarrow} \coker f^0$ the cokernel of $f^0$.
Since $f^1f^0=0$, there exists a morphism $h^0:\coker f^0\rightarrow \ker f^0$ such that $i^0 h^0 p^1=f^1$.
Since $\ca$ is hereditary, the action of $\Ext^1_{\ca}(\coker f^0,-)$ on the canonical epimorphism $M^0\rightarrow \Im f^0$ induces an epimorphism
$\Ext^1_{\ca}(\coker f^0,M^0)\rightarrow \Ext^1_{\ca}(\coker f^0,\Im f^0)$.
So we have the following commutative diagram such that the rows are exact:
\[\xymatrix{0\ar[r] &M^0\ar[r]^{l^0}\ar[d] &Z^0\ar[r]\ar[d]^{q^0} &\coker f^0\ar@{=}[d] \ar[r]&0\\
0\ar[r]&\Im f^0\ar[r] &M^1\ar[r]^{p^1} &\coker f^0 \ar[r]&0 }\]
Then \[\xymatrix{M^0\oplus \ker f^0 \ar@<0.5ex>[rr]^{\quad\quad(l^0,0)} &&Z^0\ar@<0.5ex>[ll]^{\quad\quad\quad\tiny \left(\begin{array}{cc}0\\ h^0p^1q^0 \end{array}\right)} }\]
is a $\Z/2$-graded complex.

So we have the following short exact sequence of $\Z/2$-graded complexes:
\[\xymatrix{0\ar[rr]&&\ker f^0 \ar@<-0.5ex>[dd]_1 \ar[rr]^{\tiny \left(\begin{array}{cc}-i^0\\1 \end{array}\right) \qquad}&&M^0\oplus \ker f^0 \ar@<-0.5ex>[dd]_{\quad\quad(l^0,0)} \ar[rr]^{\quad(1,i^0)}&&
M^0\ar@<-0.5ex>[dd]_{f^0}\ar[rr]&&0
\\
\\
0\ar[rr]&&\ker f^0\ar@<-0.5ex>[uu]_{0}\ar[rr]^{-l^0 i^0}&& Z^0\ar@<-0.5ex>[uu]_{\tiny \left(\begin{array}{cc}0\\ h^0p^1q^0 \end{array}\right)}\ar[rr]^{q^0}
&&
M^1\ar@<-0.5ex>[uu]_{f^1}\ar[rr]&&0}\]
%\[\xymatrix{\ker f^0 \ar@<0.5ex>[rr]^1 \ar[dd]_{\tiny \left(\begin{array}{cc}-i^0\\1 \end{array}\right) }&&\ker f^0\ar@<0.5ex>[ll]^{0}\ar[dd]^{-l^0 i^0}\\
%\\
%M^0\oplus \ker f^0 \ar@<0.5ex>[rr]^{\quad\quad(l^0,0)} \ar[dd]_{(1,i^0)} &&Z^0\ar@<0.5ex>[ll]^{\quad\quad\quad\tiny \left(\begin{array}{cc}0\\ h^0p^1q^0 \end{array}\right)}\ar[dd]^{q^0}\\
%\\
%M^0\ar@<0.5ex>[rr]^{f^0} &&M^1\ar@<0.5ex>[ll]^{f^1}
%}\]
%Therefore, the morphism $((1,i^0),q^0 )$ above is a quasi-isomorphism.
Take $U$ to be the following complex:
$$\cdots \longrightarrow0\longrightarrow M^0\stackrel{l^0}{\longrightarrow} Z^0\xrightarrow{h^0p^1q^0} \ker f^0\longrightarrow0\longrightarrow\cdots.$$
The diagram above then gives the first desired exact sequence. One can construct the second
exact sequence dually.
\end{proof}

%\begin{lemma}\label{lemma criteria of extension 2 zero}
%Let $\cb$ be an abelian (not necessarily hereditary) category.
%For any exact sequence $\xi: 0\longrightarrow X^1\xrightarrow{f^1} X^2\xrightarrow{f^2} X^3\xrightarrow{f^3} X^4\longrightarrow0$, if there exist a commutative diagram
%\[\xymatrix{ 0\ar[r] & Y^2\ar[r]^{g^2} \ar[d]^{t^2}& Y^3 \ar[r]^{g^3} \ar[d]^{t^3} &X^4 \ar@{=}[d]\ar[r] &0 \\
%&X^2\ar[r]^{f^2}& X^3\ar[r]^{f^3} & X^4\ar[r] &0 ,
% }\]
%where the first row is a short exact sequence, then $[\xi]=0$ in $\Ext^2_{\cb}(X^4,X^1)$.
%\end{lemma}
%\begin{proof}
%The commutative diagram yields the following commutative diagram:
%\[\xymatrix{0\ar[r]& X^1\ar[r]^{\tiny\left( \begin{array}{cc} 1\\0 \end{array}\right)} \ar@{=}[d] & X^1\oplus Y^2\ar[r]^{\quad(0,g^2)} \ar[d]^{(f^1,t^2)}& Y^3 \ar[r]^{g^3} \ar[d]^{t^3} &X^4 \ar@{=}[d]\ar[r] &0 \\
%0\ar[r]&X^1 \ar[r]^{f^1}&X^2\ar[r]^{f^2}& X^3\ar[r]^{f^3} & X^4\ar[r] &0.
% }\]
%Denote by $\xi'$ the exact sequence in the first row of the above diagram. Then clearly $[\xi']=0$, so $[\xi]=[\xi']=0$.
%\end{proof}

\begin{lemma}
\label{lemma extension 2 zero}
Let $\ca$ be a hereditary abelian category.
For any $M,K_X\in\cc_{\Z/2}(\ca)$ and $p\geq2$, we have
\begin{align*}
\Ext^{p}_{\cc_{\Z/2}(\ca)}(K_X,M)=0,\qquad \Ext^{p}_{\cc_{\Z/2}(\ca)}(K_X^*,M)=0,
\\
\Ext^{p}_{\cc_{\Z/2}(\ca)}(M,K_X)=0,\qquad \Ext^{p}_{\cc_{\Z/2}(\ca)}(M,K_X^*)=0.
\end{align*}
\end{lemma}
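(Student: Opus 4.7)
The plan is to reduce each of the four vanishings to the hereditariness of $\ca$ by exploiting the exact functors $(-)^0,(-)^1\colon\cc_{\Z/2}(\ca)\to\ca$ together with their exact adjoints $K_-,K_-^*\colon\ca\to\cc_{\Z/2}(\ca)$, and then applying Lemma \ref{lemma criteria of extension 2 zero}. A direct computation of Hom-sets shows that $K_-$ is simultaneously the left adjoint of $(-)^0$ (since $\Hom_{\cc_{\Z/2}(\ca)}(K_X,N)=\Hom_\ca(X,N^0)$) and the right adjoint of $(-)^1$ (since $\Hom_{\cc_{\Z/2}(\ca)}(N,K_X)=\Hom_\ca(N^1,X)$); analogously $K_-^*$ is biadjoint to $(-)^1$ and $(-)^0$, and all four functors $(-)^0,(-)^1,K_-,K_-^*$ between the abelian categories $\ca$ and $\cc_{\Z/2}(\ca)$ are exact. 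A standard Yoneda-splicing argument first reduces each vanishing to the case $p=2$: for $p\geq 3$, write a $p$-fold extension as the Yoneda product of a 1-fold and a $(p-1)$-fold, and induct using $\Ext^{p-1}=0$.

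For $\Ext^2_{\cc_{\Z/2}(\ca)}(K_X,M)=0$, take a 4-term exact sequence $\xi\colon 0\to M\to X^2\xrightarrow{f^2} X^3\xrightarrow{f^3} K_X\to 0$ in $\cc_{\Z/2}(\ca)$. I would apply $(-)^0$ to obtain a 4-term exact sequence $\xi^0$ in $\ca$ ending at $(K_X)^0=X$; by hereditariness $[\xi^0]=0$ in $\Ext^2_\ca(X,M^0)$, and the standard characterization of this vanishing produces $Y^{2,0},Y^{3,0}\in\ca$ together with a commutative diagram of the shape demanded by Lemma \ref{lemma criteria of extension 2 zero} for $\ca$, with first row the short exact sequence $0\to Y^{2,0}\to Y^{3,0}\to X\to 0$. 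Applying the exact functor $K_-$ to this diagram turns the top row into a short exact sequence $0\to K_{Y^{2,0}}\to K_{Y^{3,0}}\to K_X\to 0$ in $\cc_{\Z/2}(\ca)$, and the adjunction $K_-\dashv(-)^0$ promotes the $\ca$-morphisms $Y^{i,0}\to X^{i,0}$ and $Y^{3,0}\to X$ to chain maps $K_{Y^{i,0}}\to X^i$ and $K_{Y^{3,0}}\to K_X$, with the degree-$1$ component of each $t^i$ forced to be $d^0_{X^i}\circ t^{i,0}$. The commutativity of the lifted Lemma \ref{lemma criteria of extension 2 zero} diagram in $\cc_{\Z/2}(\ca)$ then follows from the chain-map conditions on $f^2,f^3$ together with the identity $d^0_{K_X}=1$, and Lemma \ref{lemma criteria of extension 2 zero} yields $[\xi]=0$. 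The vanishing for $K_X^*$ is obtained identically using $(-)^1$ and $K_-^*\dashv(-)^1$; the two statements in (ii) are handled by the dual of Lemma \ref{lemma criteria of extension 2 zero} (valid in the abelian category $\cc_{\Z/2}(\ca)$ by the dual proof) combined with the right-adjoint properties $(-)^1\dashv K_-$ and $(-)^0\dashv K_-^*$, following the same three-step pattern of reducing to $\ca$, factorizing via hereditariness, and lifting by the appropriate exact adjoint.

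The main technical subtlety lies in the verification step: once the adjunction forces the degree-$1$ component of $t^i\colon K_{Y^{i,0}}\to X^i$ to be $d^0_{X^i}\circ t^{i,0}$, one must confirm that the resulting squares commute in $\cc_{\Z/2}(\ca)$ in both degrees, not only in the degree where the factorization was constructed. This check reduces by direct substitution to repeated use of the chain-map identity $f^{i+1,1}d^0_{X^i}=d^0_{X^{i+1}}f^{i+1,0}$ together with the especially simple differentials of $K_X$ and $K_X^*$ (one of which is an identity, the other zero). These verifications are elementary but need to be carried out explicitly in each of the four cases, and they are where the argument actually uses the very specific form of $K_X$ and $K_X^*$ rather than just their acyclicity.
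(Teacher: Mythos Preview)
Your proposal is correct and follows essentially the same route as the paper. Both arguments take a $2$-extension $\xi$ ending in $K_X$, pass to degree $0$ to land in the hereditary category $\ca$, use hereditariness to produce the short exact sequence and comparison maps required by Lemma~\ref{lemma criteria of extension 2 zero}, and then lift back to $\cc_{\Z/2}(\ca)$ by turning a map $Z\to L^0$ into the chain map $K_Z\to L$ with components $(\,\cdot\,,\,d^0\cdot\,)$; the paper writes these chain maps down explicitly, while you obtain them via the adjunction $K_-\dashv(-)^0$, which is exactly the same construction in different language. The only cosmetic difference is that the paper invokes the proof of Lemma~\ref{Galois functor is dense for hereditary categories} to build the $\ca$-level diagram directly, whereas you appeal to the (standard) converse of Lemma~\ref{lemma criteria of extension 2 zero} for $\Ext^2_\ca=0$; these amount to the same lifting-along-an-epi argument.
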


\begin{proof}
For any exact sequence
$$\xi: 0\longrightarrow M\longrightarrow V_p\longrightarrow %V_{p-1}\longrightarrow
\cdots \longrightarrow V_1\longrightarrow K_X\longrightarrow0$$
in $\cc_{\Z/2}(\ca)$,
% namely,
%\[\xymatrix{ M^0 \ar[r]^{h_1} \ar@<0.5ex>[d]^{f^0} & L^0\ar@<0.5ex>[d]^{e^0} \ar[r]^{h_2} & N^0\ar[r]^{h_3} \ar@<0.5ex>[d]^{d^0} & X \ar@<0.5ex>[d]^{1}\\
%M^1\ar[r]^{h_1'} \ar@<0.5ex>[u]^{f^1} & L^1\ar@<0.5ex>[u]^{e^1} \ar[r]^{h_2'} & N^1 \ar[r]^{h_3'}\ar@<0.5ex>[u]^{d^1} & X.\ar@<0.5ex>[u]^{0}
%}\]
one can obtain the following commutative diagram of exact sequences:
\[\xymatrix{ \xi':& 0\ar[r] & K_{M^0} \ar[r] \ar[d]^{g} & K_{V_p^0} \ar[r]  \ar[d]& \cdots \ar[r]&   K_{V_1^0} \ar[r] \ar[d]  & K_X\ar@{=}[d] \ar[r]& 0
\\
\xi:& 0\ar[r] & M\ar[r] &V_p \ar[r] & \cdots \ar[r] & V_1 \ar[r] & K_X \ar[r] &0 }\]
Here the exact sequence $\xi'$ is naturally determined by the $0$-th component of the exact sequence $\xi$.

Since $\ca$ is hereditary and $p\geq2$, it is clear that $[\xi']=0$, where $[\xi']$ is the equivalence class of the exact sequence $\xi'$ in $\Ext^p_{\cc_{\Z/2}(\ca)}(K_X,K_{M^0})$.
From the above commutative diagram, we have $\Ext^p_{\cc_{\Z/2}(\ca)}(K_X, g) ([\xi'])=[\xi]$ and so $[\xi]=0$. It follows that $\Ext^{p}_{\cc_{\Z/2}(\ca)}(K_X,M)=0$.

By applying the involution defined in \eqref{inv}, we have $\Ext^{p}_{\cc_{\Z/2}(\ca)}(K_X^*,M)=0$ for any $p\geq2$.

One can prove the remaining two formulas dually.
\end{proof}

Using the above lemmas, we can prove the main result of this section.

\begin{proposition}
\label{proposition extension 2 zero}
Let $\ca$ be a hereditary category.
For any $K\in\cc_{\Z/2}(\ca)$ with $K$ acyclic, we have
$$\pd_{\cc_{\Z/2}(\ca)} (K)\leq 1\text{ and }\inj_{\cc_{\Z/2}(\ca)} (K)\leq 1.$$
%$$\Ext^{p}_{\cc_{\Z/2}(\ca)}(M,K)=0,\quad \Ext^{p}_{\cc_{\Z/2}(\ca)}(K,M)=0,\text{ for any } p\geq2.$$
\end{proposition}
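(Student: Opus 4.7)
The plan is to reduce the vanishing of $\Ext^p(K,M)$ and $\Ext^p(M,K)$ (in $\cc_{\Z/2}(\ca)$) for acyclic $K$ and $p \ge 2$ to the special case of complexes $K_X$ and $K_X^*$ already handled in Lemma~\ref{lemma extension 2 zero}, by interpolating $K$ with the ``resolutions'' provided by Lemma~\ref{Galois functor is dense for hereditary categories} and further decomposing those resolutions.

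Since $K$ is acyclic, the objects $\ker d^0_K$ and $\coker d^0_K$ both canonically identify with $B := \Im d^1_K$. Applying Lemma~\ref{Galois functor is dense for hereditary categories} to $K$ produces two short exact sequences
\[
0 \to K_B \to \pi(U) \to K \to 0 \quad\text{and}\quad 0 \to K \to \pi(V) \to K_B \to 0
\]
in $\cc_{\Z/2}(\ca)$. The covariant long exact sequence of $\Ext(M,-)$ applied to the first, combined with Lemma~\ref{lemma extension 2 zero}(ii), gives $\Ext^p(M,\pi(U)) \cong \Ext^p(M,K)$ for $p \ge 2$; dually, the contravariant long exact sequence of $\Ext(-,M)$ applied to the second, combined with Lemma~\ref{lemma extension 2 zero}(i), gives $\Ext^p(\pi(V),M) \cong \Ext^p(K,M)$ for $p \ge 2$. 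The task thus reduces to proving $\Ext^p(M,\pi(U)) = 0$ and $\Ext^p(\pi(V),M) = 0$ for $p \ge 2$.

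The crucial step is to decompose $\pi(U)$ (and dually $\pi(V)$) as an extension of two complexes of the form $K_X$ or $K_X^*$. From the construction in the proof of Lemma~\ref{Galois functor is dense for hereditary categories}, the bounded complex $U$ has the shape $K^0 \xrightarrow{l^0} Z^0 \xrightarrow{h^0 p^0 q^0} B$; acyclicity of $K$ forces $h^0$ to be an isomorphism, making $U$ itself a short exact sequence $0 \to K^0 \to Z^0 \to B \to 0$ in $\ca$. Then the maps $(1_{K^0},0): K^0 \to K^0 \oplus B$ and $l^0: K^0 \to Z^0$ define an inclusion $K_{K^0} \hookrightarrow \pi(U)$ of $\Z/2$-graded complexes (with compatibility with the differentials coming from $p^0 q^0 l^0 = 0$), and a direct computation using that $h^0$ is an isomorphism identifies the cokernel as $K_B^*$. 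This yields
\[
0 \to K_{K^0} \to \pi(U) \to K_B^* \to 0,
\]
and an entirely parallel construction yields a corresponding decomposition of $\pi(V)$. A final application of the long exact sequence of $\Ext$, with Lemma~\ref{lemma extension 2 zero} applied to each outer term, then delivers the desired vanishing.

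The main technical point will be verifying the identification $\pi(U)/K_{K^0} \cong K_B^*$: this requires carefully tracing the maps $l^0, h^0, p^0, q^0$ from the proof of Lemma~\ref{Galois functor is dense for hereditary categories} and using the acyclicity of $K$ critically (to make $h^0$ invertible). Once the filtrations of $\pi(U)$ and $\pi(V)$ are in place, the remainder is a straightforward diagram chase through long exact sequences of $\Ext$.
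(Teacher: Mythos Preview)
Your argument is correct and follows the same overall architecture as the paper: use Lemma~\ref{Galois functor is dense for hereditary categories} to sandwich $K$ between $K_B$ and $\pi(U)$ (resp.\ $\pi(V)$), reduce via long exact sequences to the vanishing of $\Ext^p(M,\pi(U))$ and $\Ext^p(\pi(V),M)$, and then filter $\pi(U)$, $\pi(V)$ by complexes of the form $K_X$, $K_X^*$ so that Lemma~\ref{lemma extension 2 zero} finishes.

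The difference lies in how that last filtration is obtained. The paper introduces the extension closure $\cc^{can}_{\Z/2,ac}(\ca)$ of all $K_X$, $K_X^*$ and shows, by induction on the width of a bounded complex, that $\pi$ of \emph{any} acyclic bounded complex lies in it; this gives the vanishing for $\pi(U)$ and $\pi(V)$ without ever looking at their explicit shape. You instead exploit acyclicity of $K$ directly: it forces $h^0$ to be an isomorphism, so $U$ is literally a three-term short exact sequence in $\ca$, and then $\pi(U)$ visibly sits in a single short exact sequence $0\to K_{K^0}\to\pi(U)\to K_B^*\to 0$. Your route is shorter and more explicit for the statement at hand; the paper's detour through $\cc^{can}_{\Z/2,ac}(\ca)$ is slightly more work but records the reusable fact that $\pi$ of any bounded acyclic complex has a finite filtration by $K_X$'s and $K_X^*$'s. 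One small point: your treatment of $\pi(V)$ is left to duality, which is fine, but note that the proof of Lemma~\ref{Galois functor is dense for hereditary categories} does not spell out the dual construction either, so you are implicitly relying on the same ``dually'' that the paper invokes.
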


\begin{proof}
We denote by $\cc^{can}_{\Z/2,ac}(\ca)$ the finite extension closure of the acyclic $\Z/2$-graded complexes $K_X$ and $K_X^*$ for all $X\in\ca$. Then $\cc^{can}_{\Z/2,ac}(\ca)$ is closed under taking isomorphisms. By Lemma \ref{lemma extension 2 zero}, we can obtain that $\pd_{\cc_{\Z/2}(\ca)}(K)\leq 1$ and $\inj_{\cc_{\Z/2}(\ca)}(K)\leq 1$, for any $K\in \cc^{can}_{\Z/2,ac}(\ca)$. %$\Ext^p_{\cc_{\Z/2}(\ca)}(K,M)=0$ and $\Ext^p_{\cc_{\Z/2}(\ca)}(M,K)=0$  and $p\geq 2$.

By induction on the width of bounded complexes, it is routine to prove that $\pi(X)\in\cc^{can}_{\Z/2,ac}(\ca)$ for any acyclic complex $X$ in $\cc^b(\ca)$. So $\pd_{\cc_{\Z/2}(\ca)}(\pi(X))\leq 1$ and $\inj_{\cc_{\Z/2}(\ca)}(\pi(X))\leq 1$.
%Suppose $X\neq0$. We prove the claim by induction on the width of $X$.
%Suppose that $X^i = 0$ for $i < r$ and $s < i$ and $x^r \neq 0 \neq X^s$.
%Then the width of $X$ is by definition equal to $s-r+1$.
%Since $X$ is acyclic, we have $\pi(X)$ is also acyclic. It is not hard to see that there exists a short exact sequence of bounded complexes:
%$$0\longrightarrow K \longrightarrow X\longrightarrow Y\longrightarrow0,$$
%with $K=\cdots \longrightarrow 0\longrightarrow X^l\stackrel{1}{\longrightarrow} X^l\longrightarrow0\longrightarrow \cdots$, and the width of $Y$ is strictly less than that of $X$.
%By induction on the width, one gets that $\pi(X)\in\cc^{can}_{\Z/2,ac}(\ca)$. The claim is proved.}

For any acyclic complex $K=\xymatrix{K^0\ar@<0.5ex>[r]^{d^0} &  K^1\ar@<0.5ex>[l]^{d^1}}$, by  Lemma \ref{Galois functor is dense for hereditary categories}, we have the following
short exact sequences
\begin{equation*}
%\label{equation in propoition extension 2 zero}
0\longrightarrow K_{\ker d^0}\longrightarrow \pi(U)\longrightarrow K\longrightarrow0 \text{ and }0\longrightarrow K\longrightarrow \pi(V) \longrightarrow K_{\coker d^0}\longrightarrow0
\end{equation*}
with $U,V\in\cc^{b}(\ca)$ acyclic. %Since $K$ is acyclic, we have that $\pi(U)$ is acyclic.
It follows that $\pd_{\cc_{\Z/2}(\ca)}(K)\leq 1$ and  $\inj_{\cc_{\Z/2}(\ca)}(K)\leq 1$.
%Lemma \ref{lemma extension 2 zero} yields that
%$$\Ext^p_{\cc_{\Z/2}(\ca)} (M,\pi(U))=0=\Ext^p_{\cc_{\Z/2}(\ca)} (M,K_{\ker d^0}),\text{ for any } p\geq2.$$
%Applying $\Hom_{\cc_{\Z/2}(\ca)}(M,-)$ to the short exact sequence (\ref{equation in propoition extension 2 zero}), one can see that
%$$\Ext^{p}_{\cc_{\Z/2}(\ca)}(M,K)=0,\,\,\text{ for any } p\geq2.$$
%\red{By Lemma \ref{Galois functor is dense for hereditary categories}, we have
%the following short exact sequence
%Dually, by using
%\begin{align*}
%\label{eq:right resolution1}
%0\longrightarrow K\longrightarrow \pi(V) \longrightarrow K_{\coker d^0}\longrightarrow0
%\end{align*}
%with $V\in\cc^b(\ca)$. Then %Using \eqref{eq:right resolution1},
%Dually, one can prove $$.% similarly.} % guaranteed by Lemma \ref{Galois functor is dense for hereditary categories}.
\end{proof}

%%%%%%%%%%%%
\subsection{Euler forms}

From now on, we take the field $\K=\mathbb F_q$, a finite field of $q$ elements.
In the following, we always assume that $\ca$ is a hereditary abelian $\K$-linear category which is essentially small with finite-dimensional homomorphism and extension spaces.

Let $\cc_{\Z/2,ac}({\ca})$ be the full subcategory of $\cc_{\Z/2}({\ca})$ consisting of acyclic complexes. Denote by $\Iso(\cc_{\Z/2}(\ca))$ the set of isomorphism classes $[M]$ of
$\cc_{\Z/2}(\ca)$.

By Proposition \ref{proposition extension 2 zero}, for any $[K],[M]\in \Iso(\cc_{\Z/2}(\ca))$ with $K$ acyclic, the following alternating products are well defined:
\begin{align*}
\langle [K],[M]\rangle =\prod_{p=0}^{+\infty} |\Ext^p_{\cc_{\Z/2}({\ca})}(K,M)|^{(-1)^p}=\frac{|\Hom_{\cc_{\Z/2}({\ca})}(K,M)|}{|\Ext^1_{\cc_{\Z/2}({\ca})}(K,M)|}
\end{align*}
and
\begin{align*}
\langle [M],[K]\rangle =\prod_{p=0}^{+\infty} |\Ext^p_{\cc_{\Z/2}({\ca})}(M,K)|^{(-1)^p}=\frac{|\Hom_{\cc_{\Z/2}({\ca})}(M,K)|}{|\Ext^1_{\cc_{\Z/2}({\ca})}(M,K)|}.
\end{align*}
We call them the {\em Euler forms}. They descend to bilinear forms on the Grothendieck groups $K_0(\cc_{\Z/2,ac}(\ca))$
and $K_0(\cc_{\Z/2}(\ca))$, denoted by the same symbol:
$$\langle\cdot,\cdot\rangle:K_0(\cc_{\Z/2,ac}({\ca}))\times K_0(\cc_{\Z/2}(\ca))\longrightarrow \Q^\times,$$
and
$$\langle\cdot,\cdot\rangle:K_0(\cc_{\Z/2}(\ca))\times K_0(\cc_{\Z/2,ac}({\ca}))\longrightarrow \Q^\times.$$
We can use the same symbol, since these two forms coincide on $K_0(\cc_{\Z/2,ac}({\ca}))\times K_0(\cc_{\Z/2,ac}({\ca}))$.

Let $K_0(\ca)$ be the Grothendieck group of $\ca$.
For any $A\in\ca$, we denote by $\widehat{A}$ the corresponding element in the Grothendieck group $K_0(\ca)$.
We also use $\langle \cdot,\cdot\rangle$ to denote the Euler form of $\ca$, i.e.,
\begin{align*}
\langle \widehat{A}, \widehat{B}\rangle=\frac{|\Hom_\ca(A,B)|}{|\Ext^1_\ca(A,B)|}, \text{ for any }A,B\in\ca.
\end{align*}

Recall that $C_X=(\xymatrix{ 0 \ar@<0.5ex>[r]& X \ar@<0.5ex>[l]  })$ and
$C_X^*=(\xymatrix{ X\ar@<0.5ex>[r]& 0 \ar@<0.5ex>[l]  })$ for any $X\in\ca$.

\begin{proposition}
\label{lema euler form}
For any $A ,B \in\ca$, we have the following.
\begin{align*}
\langle [C_A],[K_B]\rangle=\langle \widehat{A}, \widehat{B}\rangle,\quad \langle [C_A^*],[K_B]\rangle=1, \quad \langle [K_B],[C_A]\rangle=1,\quad
\langle [K_B],[C_A^*]\rangle=\langle \widehat{B},\widehat{A}\rangle.
\\
\langle [C_A],[K_B^*]\rangle=1, \quad \langle [C_A^*],[K_B^*]\rangle=\langle \widehat{A},\widehat{B}\rangle, \quad \langle [K_B^*],[C_A]\rangle=\langle \widehat{B},\widehat{A}\rangle,\quad
\langle [K_B^*],[C_A^*]\rangle=1.
\end{align*}
\end{proposition}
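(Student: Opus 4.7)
The plan is to prove both parts by direct computation of the $\Hom$- and $\Ext^1$-groups between the standard acyclic complexes $K_B$, $K_B^*$ and the stalk complexes $C_A$, $C_A^*$, and then to assemble the Euler forms. By Proposition \ref{proposition extension 2 zero}, $\Ext^p_{\cc_{\Z/2}(\ca)}(K,M)=0=\Ext^p_{\cc_{\Z/2}(\ca)}(M,K)$ for all $p\geq 2$ whenever $K$ is acyclic, so each of the eight Euler forms simplifies to $|\Hom|/|\Ext^1|$.

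First I would treat the $\Hom$-groups by a uniform elementary observation: a chain map $s\colon K_X\to M$ in $\cc_{\Z/2}(\ca)$ is determined by $s^0\in\Hom_\ca(X,M^0)$, since the chain-map axiom forces $s^1=d^0_M s^0$ (and the remaining equation $d^1_M d^0_M s^0=0$ is automatic). Dually one obtains $\Hom_{\cc_{\Z/2}(\ca)}(M,K_X)\cong\Hom_\ca(M^1,X)$, $\Hom_{\cc_{\Z/2}(\ca)}(K_X^*,M)\cong\Hom_\ca(X,M^1)$, and $\Hom_{\cc_{\Z/2}(\ca)}(M,K_X^*)\cong\Hom_\ca(M^0,X)$. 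Since $C_A$ is supported in degree $1$ and $C_A^*$ in degree $0$, substituting $M=C_A$ or $C_A^*$ immediately identifies each of the eight $\Hom$-groups with either $\Hom_\ca(A,B)$, $\Hom_\ca(B,A)$, or $0$.

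Next I would compute the eight $\Ext^1$-groups by classifying short exact sequences in $\cc_{\Z/2}(\ca)$ up to equivalence. For instance, any SES $0\to K_B\to M\to C_A\to 0$ forces $M^0=B$ and $0\to B\to M^1\to A\to 0$; compatibility of the inclusion $K_B\hookrightarrow M$ with $d^0_{K_B}=1_B$ identifies $d^0_M$ with the inclusion $B\hookrightarrow M^1$, and then $d^1_M d^0_M=0$ with this inclusion being split-monic forces $d^1_M=0$. Conversely, any $\ca$-extension of $A$ by $B$ (with these canonical differentials) gives back such an $M$, and equivalences in $\cc_{\Z/2}(\ca)$ coincide with those in $\ca$, so $\Ext^1_{\cc_{\Z/2}(\ca)}(C_A,K_B)\cong\Ext^1_\ca(A,B)$. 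Dually $\Ext^1(K_B,C_A^*)\cong\Ext^1_\ca(B,A)$, and analogously for the starred variants $\Ext^1(C_A^*,K_B^*)$ and $\Ext^1(K_B^*,C_A)$. In the remaining four "mismatched-degree" cases one compatibility forces the candidate $\ca$-extension to split: e.g.\ for $\Ext^1(C_A^*,K_B)$, the identity $d^0_M j^0_B=j^1_B=1_B$ exhibits $d^0_M$ as a retraction of the inclusion $B\hookrightarrow M^0$, so $0\to B\to M^0\to A\to 0$ splits, and tracking the remaining free differential parameter modulo automorphisms of the SES kills it, giving $\Ext^1=0$.

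Assembling the computed Hom and $\Ext^1$ values into $\langle\cdot,\cdot\rangle=|\Hom|/|\Ext^1|$ yields precisely the eight identities of parts (i) and (ii). There is no serious obstacle; the only point that requires care is the bookkeeping of how the involution $*$ matches the direction of the nontrivial differential in $K_X$ or $K_X^*$ with the degree of the nonzero stalk in $C_A$ or $C_A^*$, but treating the four $\Hom$-formulas symmetrically makes this entirely transparent and reduces every case to the same short calculation.
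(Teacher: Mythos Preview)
Your proposal is correct and follows essentially the same approach as the paper: both compute $\Hom$ and $\Ext^1$ directly by classifying chain maps and short exact sequences, using Proposition~\ref{proposition extension 2 zero} to ensure higher Ext groups vanish. Your treatment is slightly more streamlined in that you state the four general isomorphisms $\Hom(K_X,M)\cong\Hom_\ca(X,M^0)$, $\Hom(M,K_X)\cong\Hom_\ca(M^1,X)$, etc., and then specialize, whereas the paper works out $\langle[C_A],[K_B]\rangle$ and $\langle[C_A^*],[K_B]\rangle$ explicitly and leaves the rest to symmetry; but the underlying computations are identical.
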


\begin{proof}
First, let us prove  $\langle [C_A],[K_B]\rangle=\langle \widehat{A}, \widehat{B}\rangle$.

Clearly, $\Hom_{\cc_{\Z/2}(\ca)} ( C_A, K_B )=\Hom_\ca(A,B)$.
For any short exact sequence in $\cc_{\Z/2}(\ca)$:
\[\xymatrix{ 0\ar[r]&B \ar@<-0.5ex>[d]_1 \ar[r]^{l_2}& X^0 \ar@<-0.5ex>[d]_f \ar[r] & 0 \ar@<-0.5ex>[d]\ar[r]&0
\\
 0\ar[r]&B\ar@<-0.5ex>[u]_0 \ar[r]^{l_1} &X^1\ar@<-0.5ex>[u]_g \ar[r] ^{\pi_1}&A\ar@<-0.5ex>[u]\ar[r]&0,
}\]
we have $l_2:B\rightarrow X^0$ is an isomorphism. It follows from $f=l_1l_2^{-1}$ that $f$ is injective,  and then $g=0$ since $fg=0$. So the short exact sequence is equivalent to
\[\xymatrix{ 0\ar[r]&B \ar@<-0.5ex>[d]_1 \ar[r]^{1}& B \ar@<-0.5ex>[d]_{l_1} \ar[r] & 0 \ar@<-0.5ex>[d]\ar[r]&0
\\
 0\ar[r]&B\ar@<-0.5ex>[u]_0 \ar[r]^{l_1} &X^1\ar@<-0.5ex>[u]_0 \ar[r] ^{\pi_1}&A\ar@<-0.5ex>[u]\ar[r]&0.
}\]
%\[\xymatrix{ B \ar@<0.5ex>[r]^1 \ar[d]^{1}&B\ar@<0.5ex>[l]^0 \ar[d] ^{l_1}\\
%B \ar@<0.5ex>[r]^{l_1} \ar[d]&X^1\ar@<0.5ex>[l]^0 \ar[d] ^{\pi_1}\\
%0\ar@<0.5ex>[r] &A\ar@<0.5ex>[l]
%}\]
Then $\Ext^1_{\cc_{\Z/2}(\ca)}(C_A,K_B)=\Ext^1_\ca(A,B)$. Therefore, $\langle [C_A],[K_B]\rangle=\langle \widehat{A},\widehat{B}\rangle$.

Second, let us prove $\langle [C_A^*],[K_B]\rangle=1$.

Clearly, $\Hom_{\cc_{\Z/2}(\ca)} ( C_A^*, K_B )=0$.
For any short exact sequence in $\cc_{\Z/2}(\ca)$:
\[\xymatrix{0\ar[r]& B \ar@<-0.5ex>[d]_1 \ar[r]^{l_2}& X^0 \ar@<-0.5ex>[d]_f \ar[r]^{\pi_2} & A \ar@<-0.5ex>[d]\ar[r]&0
\\
0\ar[r]& B\ar@<-0.5ex>[u]_0 \ar[r]^{l_1} &X^1\ar@<-0.5ex>[u]_g \ar[r]&0\ar@<-0.5ex>[u]\ar[r]&0,
}\]
%\[\xymatrix{ B \ar@<0.5ex>[r]^1 \ar[d]^{l_2}&B\ar@<0.5ex>[l]^0 \ar[d] ^{l_1}\\
%X^0 \ar@<0.5ex>[r]^f \ar[d]^{\pi_2}&X^1\ar@<0.5ex>[l]^g \ar[d]\\
%A\ar@<0.5ex>[r] &0\ar@<0.5ex>[l]
%}\]
we have that $l_1:B\rightarrow X^1$ is an isomorphism. Then $g=0$ since $gl_1=0$. Furthermore, we have that $l_2$ is a section and $f$ is a retraction since $fl_2=l_1$. This implies $X^0\cong A\oplus B$.
So the short exact sequence is equivalent to
\[\xymatrix{ 0\ar[r]&B \ar@<-0.5ex>[d]_1 \ar[r]^{\tiny\left( \begin{array}{cc} 1\\0 \end{array}\right)}& B\oplus A \ar@<-0.5ex>[d]_{(1,0)} \ar[r]^{\quad(0,1)} & A \ar@<-0.5ex>[d]\ar[r]&0
\\
 0\ar[r]&B\ar@<-0.5ex>[u]_0 \ar[r]^{1} &B\ar@<-0.5ex>[u]_0 \ar[r]&0\ar@<-0.5ex>[u]\ar[r]&0,
}\]
%\[\xymatrix{ B \ar@<0.5ex>[r]^1 \ar[d]_{\tiny\left( \begin{array}{cc} 1\\0 \end{array}\right)}&B\ar@<0.5ex>[l]^{0} \ar[d] ^{1}\\
%B\oplus A \ar@<0.5ex>[r]^{\quad(1,0)} \ar[d]^{(0,1)}&B\ar@<0.5ex>[l]^{\quad0} \ar[d]\\
%A\ar@<0.5ex>[r] &0\ar@<0.5ex>[l]
%}\]
which is split.
Then
we have $\Ext^1_{\cc_{\Z/2}(\ca)}(C_A^*,K_B)=0$. So $\langle [C_A^*],[K_B]\rangle=1$.

The remaining formulas are proved similarly.
\end{proof}

\begin{corollary}
\label{lemma coincide of Euler forms}
For any $X_1,X_2\in\ca$, we have
\begin{eqnarray*}
&&\langle [K_{X_1}],[ K_{X_2}]\rangle=\langle \widehat{X_1},\widehat{X_2}\rangle, \quad \langle [K_{X_1}^*], [K_{X_2}^*]\rangle=\langle \widehat{X_1},\widehat{X_2}\rangle,\\
&&\langle [K_{X_1}], [K_{X_2}^*]\rangle=\langle \widehat{X_1},\widehat{X_2}\rangle,\quad\langle [K_{X_1}^*], [K_{X_2}]\rangle=\langle \widehat{X_1},\widehat{X_2}\rangle.
\end{eqnarray*}
\end{corollary}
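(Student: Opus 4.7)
The plan is to reduce everything to Proposition \ref{lema euler form} by decomposing the acyclic $K$-type complexes in the Grothendieck group of $\cc_{\Z/2}(\ca)$ as sums of the simpler $C$-type complexes, and then invoking bilinearity of the Euler form.

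First, I would write down directly the short exact sequences
$$0\rightarrow C_X\rightarrow K_X\rightarrow C_X^*\rightarrow0\quad\text{and}\quad 0\rightarrow C_X^*\rightarrow K_X^*\rightarrow C_X\rightarrow0$$
in $\cc_{\Z/2}(\ca)$, where the first map in each sequence is the identity on the single nonzero component and zero elsewhere, and similarly for the second map. A straightforward check that the differentials commute (using the fact that $C_X$ and $C_X^*$ have zero differentials) verifies these sequences. Consequently, in $K_0(\cc_{\Z/2}(\ca))$ one has the identity
$$[K_X]\;=\;[C_X]+[C_X^*]\;=\;[K_X^*].$$

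Now I would use the bilinearity of the Euler forms, which as recalled in the excerpt descend both on $K_0(\cc_{\Z/2,ac}(\ca))\times K_0(\cc_{\Z/2}(\ca))$ and on $K_0(\cc_{\Z/2}(\ca))\times K_0(\cc_{\Z/2,ac}(\ca))$. Since $K_{X_1}$ and $K_{X_1}^*$ are acyclic, I may plug the decomposition $[K_{X_2}]=[C_{X_2}]+[C_{X_2}^*]$ (resp.\ $[K_{X_2}^*]=[C_{X_2}]+[C_{X_2}^*]$) into the second slot. For example,
$$\langle [K_{X_1}],[K_{X_2}]\rangle \;=\; \langle [K_{X_1}],[C_{X_2}]\rangle\cdot\langle [K_{X_1}],[C_{X_2}^*]\rangle \;=\;1\cdot\langle\widehat{X_1},\widehat{X_2}\rangle\;=\;\langle\widehat{X_1},\widehat{X_2}\rangle$$
by Proposition \ref{lema euler form}(i). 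The three other cases are entirely analogous: in every expansion into two factors, Proposition \ref{lema euler form} supplies one factor equal to $1$ and the other factor equal to $\langle\widehat{X_1},\widehat{X_2}\rangle$, giving the claimed identities. There is essentially no serious obstacle here; the only mild point of care is choosing the correct orientation for the short exact sequences so that the differentials genuinely commute, after which the rest is pure bookkeeping against the entries of Proposition \ref{lema euler form}.
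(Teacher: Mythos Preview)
Your proof is correct and follows essentially the same approach as the paper: both use the short exact sequence $0\rightarrow C_X\rightarrow K_X\rightarrow C_X^*\rightarrow0$ to decompose one of the two arguments in the Grothendieck group and then read off the values from Proposition \ref{lema euler form}. The only cosmetic difference is that the paper decomposes the \emph{first} argument (using bilinearity of the form on $K_0(\cc_{\Z/2}(\ca))\times K_0(\cc_{\Z/2,ac}(\ca))$), whereas you decompose the \emph{second} argument (using bilinearity on $K_0(\cc_{\Z/2,ac}(\ca))\times K_0(\cc_{\Z/2}(\ca))$); this is immaterial.
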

\begin{proof}
We only prove $\langle [K_{X_1}], [K_{X_2}^*]\rangle=\langle \widehat{X_1},\widehat{X_2}\rangle$. The others are proved similarly.

There is a short exact sequence
$$0\longrightarrow C_{X_1}\longrightarrow K_{X_1}\longrightarrow C_{X_1}^*\longrightarrow0.$$
Then Proposition \ref{lema euler form} shows that
\begin{align*}
\langle [K_{X_1}], [K_{X_2}^*]\rangle=&\langle[C_{X_1}\oplus C_{X_1}^*],[ K_{X_2}^*]\rangle\\
=&\langle[C_{X_1}], [K_{X_2}^*]\rangle\langle[C_{X_1}^*], [K_{X_2}^*]\rangle\\
=&\langle \widehat{X_1},\widehat{X_2}\rangle.
\end{align*}
\end{proof}

%%%%%%%%%%%%
\section{Semi-derived Ringel-Hall algebras}
\label{sec:semi}
%%%%%%%%%%%%%%
In this section, we define the semi-derived Ringel-Hall algebras for hereditary abelian categories.

%%%%%%%%%%%%
\subsection{Ringel-Hall algebras}
%%%%%%%%%%%%

Let $\ce$ be an essentially small exact category, linear over the finite field $\K=\F_q$.
Assume that $\ce$ has finite morphism and extension spaces:
$$|\Hom_\ce(A,B)|<\infty,\quad |\Ext^1_\ce(A,B)|<\infty,\,\,\forall A,B\in\ce.$$

Given objects $A,B,C\in\ce$, define $\Ext^1_\ce(A,C)_B\subseteq \Ext^1_\ce(A,C)$ to be the subset parameterising extensions with the middle term  isomorphic to $B$. We define the Hall algebra (also called Ringel-Hall algebra) $\ch(\ce)$ to be the $\Q$-vector space whose basis is formed by the isomorphism classes $[A]$ of objects $A$ of $\ce$, with the multiplication
defined by
$$[A]\diamond [C]=\sum_{[B]\in \Iso(\ce)}\frac{|\Ext_\ce^1(A,C)_B|}{|\Hom_\ce(A,C)|}[B].$$
It is well known that
the algebra $\ch(\ce)$ is associative and unital. The unit is given by $[0]$, where $0$ is the zero object of $\ce$; see \cite{R0} and also \cite{Rie,P,Hub,Br}.

\begin{remark}
Ringel's version of Hall algebra \cite{R0} uses a different Hall product, but these two versions of Hall algebras are isomorphic by rescaling the generators by the orders of automorphism groups of objects.
\end{remark}

%%%%%%%%%%%%
\subsection{Semi-derived Ringel-Hall algebras}

In the following, we always assume that $\ca$ is a hereditary abelian $\K$-linear category which is essentially small with finite-dimensional homomorphism and extension spaces.

Let $\ch(\cc_{\Z/2}(\ca))$ be the Ringel-Hall algebra of $\cc_{\Z/2}(\ca)$. By definition, $\ch(\cc_{\Z/2}(\ca))$ has a basis formed by the isomorphism classes $[M]$ of objects $M$ of $\cc_{\Z/2}(\ca)$, with the product given by
\begin{equation*}
[L]\diamond [M]=\sum_{[X]\in \Iso(\cc_{\Z/2}(\ca))}\frac{|\Ext^1_{\cc_{\Z/2}(\ca)}(L,M)_X|}{|\Hom_{\cc_{\Z/2}(\ca)}(L,M)|}[X].
\end{equation*}
It is well known that
$\ch(\cc_{\Z/2}(\ca))$ is a $K_0(\cc_{\Z/2}(\ca))$-graded algebra, where $K_0(\cc_{\Z/2}(\ca))$ is the Grothendieck group of $\cc_{\Z/2}(\ca)$.

Let $I_{\Z/2}$ be the two-sided ideal of $\ch(\cc_{\Z/2}(\ca))$ generated by all differences $[L]-[K\oplus M]$ if there is a short exact sequence
\begin{equation}
  \label{eq:ideal}
 0 \longrightarrow K \longrightarrow L \longrightarrow M \longrightarrow 0
\end{equation}
with $K$ acyclic. Then $I_{\Z/2}$ is generated by $K_0(\cc_{\Z/2}(\ca))$-homogeneous elements.

%\red{Let $I_{\Z/2}$ be the two-sided ideal of $\ch(\cc_{\Z/2}(\ca))$ generated by all differences $[M]-[N]$ if $H^\bullet(M)\cong H^\bullet(N)$ and there exist short exact sequences
%\begin{align*}
%0\longrightarrow K_1 \longrightarrow X\longrightarrow M\longrightarrow 0, \qquad 0\longrightarrow K_2\longrightarrow X \longrightarrow N\longrightarrow0
%\end{align*}
%such that $K_1,K_2$ are acyclic complexes and $\widehat{K_1}=\widehat{K_2}$ in $K_0(\cc_{\Z/2,ac}(\ca))$.}

Let $\ch(\cc_{\Z/2}(\ca))/I_{\Z/2}$ be the quotient algebra. Then $\ch(\cc_{\Z/2}(\ca))/I_{\Z/2}$ is also a $K_0(\cc_{\Z/2}(\ca))$-graded algebra.  We also denote by $\diamond$ the induced multiplication in $\ch(\cc_{\Z/2}(\ca))/I_{\Z/2}$. In the following, we shall use the same symbols both in $\ch(\cc_{\Z/2}(\ca))$ and $\ch(\cc_{\Z/2}(\ca))/I_{\Z/2}$.

\begin{lemma}
\label{lemma multiplcation in quotient algebra}
For any $K\in\cc_{\Z/2,ac}(\ca)$ and $M\in\cc_{\Z/2}(\ca)$, we have
\begin{equation}
\label{prod:MK}
 [M]\diamond [K]=\frac{1}{\langle [M],[K]\rangle}[M\oplus K]
\end{equation}
in $\ch(\cc_{\Z/2}(\ca))/I_{\Z/2}$.
In particular, for any $K_1,K_2\in\cc_{\Z/2,ac}(\ca)$, we have
\begin{equation}
\label{eq:prod KK}
[K_1]\diamond [K_2]=\frac{1}{\langle [K_1],[K_2]\rangle}[K_1\oplus K_2]
\end{equation}
in $\ch(\cc_{\Z/2}(\ca))/I_{\Z/2}$.
\end{lemma}
\begin{proof}
For a short exact sequence $0 \rightarrow K \rightarrow X \rightarrow M \rightarrow 0$, we obtain that $[X]-[K\oplus M]\in I_{\Z/2}$, which yields  in $\ch(\cc_{\Z/2}(\ca))/I_{\Z/2}$ that
\begin{eqnarray*}
[M]\diamond [K]&=&\sum_{[X]\in \Iso(\cc_{\Z/2}(\ca))}\frac{|\Ext^1_{\cc_{\Z/2}(\ca)}(M,K)_X|}{|\Hom_{\cc_{\Z/2}(\ca)}(M,K)|}[X]\\
&=&\frac{|\Ext^1_{\cc_{\Z/2}(\ca)}(M,K)|}{|\Hom_{\cc_{\Z/2}(\ca)}(M,K)|}[M\oplus K]\\
&=&\frac{1}{\langle [M],[K]\rangle}[M\oplus K].
\end{eqnarray*}
\end{proof}

In order to define the desired algebra, we need to take a localization of $\ch(\cc_{\Z/2}(\ca))/I_{\Z/2}$. We refer the readers to \cite{Lam} for the theory of localizations of noncommutative rings.
For convenience, we recall the (right) localization of rings in the following.
Let $A$ be a ring with identity $1$, $S$ a subset of $A$ closed under multiplication, and $1\in S$. Recall that a \emph{right localization of $A$ with respect to $S$} is a ring $R$ and a ring morphism $i:A\rightarrow R$ such that
\begin{itemize}
\item[(i)] $i(s)$ is a unit in $R$ for each $s\in S$,
\item[(ii)] every element of $R$ has the form $i(a)i(s)^{-1}$ for some $a\in A$, $s\in S$,
\item[(iii)] $i(a)i(s)^{-1}=i(b)i(s)^{-1}$ if and only if $at=bt$ for some $t\in S$.
\end{itemize}
Such $R$ is a \emph{universal $S$-inverting} ring, and so is unique. Thus we can safely denote $R$ by $AS^{-1}$ (or $A[S^{-1}]$) when it exists. We will suppress the map $i$ and write the elements of $AS^{-1}$ as $as^{-1}$.

We say that $S$ satisfies the \emph{right Ore condition} if for any $a\in A$ and $s\in S$, there exist $a_1\in A$ and $s_1\in S$ such that $sa_1=as_1$. We say that $S$ is \emph{right reversible} if for any $a\in A$, $s\in S$ such that $sa=0$ in $A$,  there exists a $t\in S$ such that $at=0$ in $A$.

{\O}re's localization Theorem states that the right localization $AS^{-1}$ exists if and only if $S$ is a right Ore, right reversible subset of $A$.

Returning to Ringel-Hall algebras, we consider the following subset of $\ch(\cc_{\Z/2}(\ca))/I_{\Z/2}$:
 \begin{align}
 \label{multiset}
S_{\Z/2}:=\{a[K] \in \ch(\cc_{\Z/2}(\ca))/I_{\Z/2} \mid  a \in \Q^\times, K\in\cc_{\Z/2,ac}(\ca)\},
 \end{align}
which is a multiplicatively closed subset with the identity $[0]\in S_{\Z/2}$.

For $N=\xymatrix{N^0\ar@<0.5ex>[r]^{d^0} & N^1\ar@<0.5ex>[l]^{d^1}}$, we denote by $\Cone(1_N)$ the complex
\[N^0\oplus N^1\xymatrix{\ar@<1ex>[rr]^{\tiny\left(\begin{array}{cc}d_0&1\\
0&-d_1 \end{array} \right)} && \ar@<1ex>[ll]^{\tiny\left(\begin{array}{cc}d_1&1\\
0&-d_0 \end{array}\right) } } N^1\oplus N^0.\]
It is not hard to see that $\Cone(1_N)$ is acyclic, and there exist the following two short exact sequences
\begin{align*}
0\longrightarrow N\longrightarrow \Cone(1_{N})\longrightarrow N^*\longrightarrow0
\text{ and }
0\longrightarrow N^*\longrightarrow \Cone(1_{N^*})\longrightarrow N\longrightarrow0.
\end{align*}

Denote by
\begin{align}
\label{set}
\Omega_{\Z/2}:= \{[L]-&[K\oplus M]\mid\exists \text{ a short exact sequence }
\\
\notag
&0\longrightarrow K \longrightarrow L\longrightarrow M \longrightarrow 0\mbox{ with }K\mbox{ acyclic} \}.
\end{align}
Note that
\begin{align}
I_{\Z/2}=(\Omega_{\Z/2}),
\end{align}
i.e., $I_{\Z/2}$ is the two-sided ideal generated by $\Omega_{\Z/2}$.

\begin{lemma}
\label{lem:existC}
For any $N\in\cc_{\Z/2}(\ca)$ such that $|\Ext^1_{\cc_{\Z/2}(\ca)}(K,M)_N|\neq0$, there exists a complex $A_N\in\cc_{\Z/2,ac}(\ca)$ such that
\begin{align*}
[A_N\oplus M]-[\Cone(1_{M^*})\oplus K\oplus M], \,\, [A_N\oplus M]-[\Cone(1_{M^*})\oplus N] \in \Omega_{\Z/2}.
\end{align*}
\end{lemma}

\begin{proof}
Since $|\Ext^1_{\cc_{\Z/2}(\ca)}(K,M)_N|\neq0$, there is a short exact sequence $0\rightarrow M\rightarrow N\rightarrow K\rightarrow0$. Note that $K$ is acyclic and so its projective dimension is less than or equal to 1. Then $\Ext^1_{\cc_{\Z/2}(\ca)}(K,-)$ sends the natural epimorphism $\Cone(1_{M^*})\rightarrow M$ %\rightarrow0$
to an epimorphism
$$\Ext^1_{\cc_{\Z/2}(\ca)}(K,\Cone(1_{M^*}))\rightarrow \Ext^1_{\cc_{\Z/2}(\ca)}(K,M),$$
namely we have
the following commutative diagram of short exact sequences:
\[\xymatrix{% &
%M^* \ar@{=}[r] \ar[d] & M^*\ar[d] &
%\\
 %0\ar[r] &
 \Cone(1_{M^*}) \ar[r] \ar[d]  & A_N\ar[r] \ar[d] & K \ar@{=}[d] %\ar[r] &0 \\
 \\
%0\ar[r] &
 M\ar[r] & N\ar[r] & K%\ar[r] %&0
  }\]
So we have a short exact sequence
$$0\longrightarrow \Cone(1_{M^*}) \longrightarrow A_N\oplus M\longrightarrow K\oplus M\longrightarrow0,$$
and then $[A_N\oplus M]-[\Cone(1_{M^*})\oplus K\oplus M]\in\Omega_{\Z/2}$ by noting that $\Cone(1_{M^*})$ is acyclic.
%Furthermore, by definition,

Since the above commutative diagram is a pushout and clearly also a pullback, we have
a short exact sequence
$$0\longrightarrow \Cone(1_{M^*}) \longrightarrow A_N\oplus M\longrightarrow N\longrightarrow0.$$
It follows that $[A_N\oplus M]-[\Cone(1_{M^*})\oplus N]\in \Omega_{\Z/2}$ since $\Cone(1_{M^*})$ is acyclic.
\end{proof}

\begin{lemma}
\label{lem:right Ore}
For any $K,M\in\cc_{\Z/2}(\ca)$ with $K$ acyclic,  %there exists an acyclic complex $C\in\cc_{\Z/2}(\ca)$ such that
we have
\begin{align}
\label{equation proposition localizaition of Ringel-Hall algebra 1}
[K]\diamond [M]\diamond [\Cone(1_{M^*})]= \frac{ \langle [M],[ K]\rangle}{\langle [K],[M]\rangle  } [M]\diamond [K]\diamond [\Cone(1_{M^*})]
\end{align}
in $\ch(\cc_{\Z/2}(\ca))/I_{\Z/2}$.
\end{lemma}

\begin{proof}

%For any $\Z/2$-graded complex $M$ and acyclic complex $K$,
By definition, we have
\begin{equation*}
[K]\diamond [M]=\sum_{[N]\in\Iso(\cc_{\Z/2}(\ca))} \frac{|\Ext^1_{\cc_{\Z/2}(\ca)}(K,M)_N|}{|\Hom_{\cc_{\Z/2}(\ca)}(K,M)|}[N].
\end{equation*}
For any $[N]\in\Iso(\cc_{\Z/2}(\ca))$ such that $|\Ext^1_{\cc_{\Z/2}(\ca)}(K,M)_N|\neq0$, by Lemma \ref{lem:existC}, we have
$$[N\oplus \Cone(1_{M^*})]-[\Cone(1_{M^*})\oplus K\oplus M]\in I_{\Z/2},$$
and so
\begin{align*}
&[K]\diamond [M]\diamond [\Cone(1_{M^*})]
\\
=&\sum_{[N]\in\Iso(\cc_{\Z/2}(\ca))} \frac{|\Ext^1_{\cc_{\Z/2}(\ca)}(K,M)_N|}{|\Hom_{\cc_{\Z/2}(\ca)}(K,M)|}[N]\diamond [\Cone(1_{M^*})]
\\
=&\sum_{[N]\in\Iso(\cc_{\Z/2}(\ca))} \frac{|\Ext^1_{\cc_{\Z/2}(\ca)}(K,M)_N|}{|\Hom_{\cc_{\Z/2}(\ca)}(K,M)|}\frac{1}{\langle [N], [\Cone(1_{M^*})] \rangle} [N\oplus \Cone(1_{M^*})]
\\
=& \sum_{[N]\in\Iso(\cc_{\Z/2}(\ca))} \frac{|\Ext^1_{\cc_{\Z/2}(\ca)}(K,M)_N|}{|\Hom_{\cc_{\Z/2}(\ca)}(K,M)|}\frac{1}{\langle [K\oplus M], [\Cone(1_{M^*})] \rangle} [\Cone(1_{M^*})\oplus K\oplus M]
\\
%\displaybreak
=& \frac{1}{\langle [K],[M]\rangle \langle [K\oplus M], [\Cone(1_{M^*})] \rangle} [\Cone(1_{M^*})\oplus K\oplus M]
\\
=& \frac{ \langle [M], [K\oplus \Cone(1_{M^*})]\rangle}{\langle [K],[M]\rangle\langle [K\oplus M], [\Cone(1_{M^*})] \rangle} [M]\diamond [K\oplus \Cone(1_{M^*})]
\\
=&\frac{ \langle [M],[ K]\rangle}{\langle [K],[M]\rangle  } [M]\diamond [K]\diamond [\Cone(1_{M^*})]
\end{align*}
in $\ch(\cc_{\Z/2}(\ca))/I_{\Z/2}$.
\end{proof}

\begin{proposition}
\label{proposition localizaition of Ringel-Hall algebra}
The multiplicatively closed subset $S_{\Z/2}$ is a right Ore, right reversible subset of $\ch(\cc_{\Z/2}(\ca))/I_{\Z/2}$. Equivalently, the right localization of $\ch(\cc_{\Z/2}(\ca))/I_{\Z/2}$ with respect to $S_{\Z/2}$ exists, denoted by
$(\ch(\cc_{\Z/2}(\ca))/I_{\Z/2})[S_{\Z/2}^{-1}]$.
\end{proposition}
\begin{proof}
%The classes $[M]$ of $\Z/2$-graded complexes are generators of $\ch(\cc_{\Z/2}(\ca))/I_{\Z/2}$.
From Lemma \ref{lem:right Ore}, it is clear that $S_{\Z/2}$ is a right Ore subset of $\ch(\cc_{\Z/2}(\ca))/I_{\Z/2}$.

Assume that $[K]\diamond( \sum_{i=1}^n a_i [M_i])=0$. Since $\ch(\cc_{\Z/2}(\ca))/I_{\Z/2}$ is a $K_0(\cc_{\Z/2}(\ca))$-graded algebra, we can assume that all $[M_i]\,\,(1\leq i\leq n)$ have the same $K_0(\cc_{\Z/2}(\ca))$-degree.
By the equality (\ref{equation proposition localizaition of Ringel-Hall algebra 1}), it is easy to see that there exists a complex $T\in\cc_{\Z/2,ac}(\ca)$ such that $[K]\diamond [M_i]\diamond [T]= \frac{ \langle [M_i],[ K]\rangle}{\langle [K],[M_i]\rangle  } [M_i]\diamond [K]\diamond [T]$ for all $1\leq i\leq n$. So we obtain that
$$0=[K]\diamond( \sum_{i=1}^n a_i [M_i])\diamond [T]= \sum_{i=1}^n a_i \frac{\langle [M_i],[K]\rangle }{\langle [K],[M_i]\rangle }[M_i] \diamond [K]\diamond [T]. $$
By our assumption,
$\frac{\langle [M_i],[K]\rangle }{\langle [K],[M_i]\rangle }$ are all equal for $1\leq i\leq n$. We denote this quotient by $b$. Note that $b\neq0$.
So $$ (\sum_{i=1}^n a_i [M_i]) \diamond( b [K\oplus T])=\sum_{i=1}^n a_ib\langle [K],[T]\rangle [M_i] \diamond [K]\diamond [T] =0.$$
Thus, $S_{\Z/2}$ is a right reversible subset.
\end{proof}

\begin{definition}
\label{definition of the modified Ringel-hall algebra}
For any hereditary abelian $\K$-linear category $\ca$ which is essentially small with finite-dimensional homomorphism and extension spaces, $(\ch(\cc_{\Z/2}(\ca))/I_{\Z/2})[S_{\Z/2}^{-1}]$ is called the ($\Z/2$-graded) semi-derived Ringel-Hall algebra of $\ca$, and denoted by $\cs\cd\ch_{\Z/2}(\ca)$ or $\cs\cd\ch(\ca)$.
\end{definition}

\begin{remark}
In a previous version of this paper \cite{LuP} (available on arXiv), the algebra in Definition \ref{definition of the modified Ringel-hall algebra} was called the ($\Z/2$-graded) {\em modified Ringel-Hall algebra} of $\ca$, and denoted by $\cm\ch_{\Z/2}(\ca)$ therein. To avoid confusion with the {\em modified quantum groups} (see \cite{Lu}), we changed the name to the present one. The present name comes from the paper \cite{Gor13} by Gorsky.
He defined in \cite{Gor13} the {\em semi-derived Hall algebra}  for an exact category  with enough projective objects (we shall recall his definition in Definition \ref{def:semiderivedGor} below). Our definition is quite different from his. However, for a hereditary abelian category with enough projective objects, he proved that the semi-derived Hall algebra defined in \cite{Gor13} has the same form as the algebra defined above.
\end{remark}

In the following, we also denote the multiplication in $\cs\cd\ch(\ca)$ (namely, $(\ch(\cc_{\Z/2}(\ca))/I_{\Z/2})[S_{\Z/2}^{-1}]$) by $\diamond$.

\begin{lemma}
\label{corollary hall multiplicatoin of acyclic complexes}
For any $K_1,K_2\in\cc_{\Z/2,ac}(\ca)$ and $M\in\cc_{\Z/2}(\ca)$,  we have
\begin{align}
\label{eq:KMMK}
&[M]\diamond [K]=\frac{1}{\langle [M],[K]\rangle}[K\oplus M],\qquad
[K]\diamond [M]=\frac{1}{\langle [K],[M]\rangle}[K\oplus M],
\\
&[K_1]^{-1}\diamond [K_2]^{-1}=\langle [K_2],[K_1]\rangle [K_1\oplus K_2]^{-1},\quad [K_1]^{-1}\diamond [K_2]=\frac{\langle [K_1], [K_2]\rangle}{\langle [K_2],[K_1]\rangle}[K_2]\diamond [K_1]^{-1}
\end{align}
in $\cs\cd\ch(\ca)$.
\end{lemma}

\begin{proof}
The first formula follows from \eqref{prod:MK}.
From (\ref{equation proposition localizaition of Ringel-Hall algebra 1}), we obtain that
$$[K]\diamond[M]\diamond[\Cone(1_{M^*})]= \frac{ \langle [M], [K]\rangle}{\langle [K],[M]\rangle  } [M]\diamond [K]\diamond [\Cone(1_{M^*})],$$ %for some acyclic complex $\Cone(1_{M^*})$,
which implies that
$$[K]\diamond[M]=\frac{ \langle [M], [K]\rangle}{\langle [K],[M]\rangle  } [M]\diamond [K]$$
in $\cs\cd\ch(\ca)$. This proves the second formula.

The remaining two formulas follow from  \eqref{eq:prod KK}.
\end{proof}

%%%%%%%%%%%%%%%
\subsection{Quantum torus and the bimodule $\cm_{\Z/2}(\ca)$}
\label{subsec:quantum torus}
The existence of a good basis of $\cs\cd\ch(\ca)$ is crucial for our later study, but it is difficult to find it directly. So our strategy is to construct a $\mathbb{Q}$-linear quotient space of $\ch(\cc_{\Z/2}(\ca))$ and to find a good basis there. Then we prove that this space is naturally isomorphic to $\cs\cd\ch(\ca)$ as $\mathbb{Q}$-linear spaces.

Consider the set $\Iso(\cc_{\Z/2,ac}({\ca}))$ of isomorphism classes $[K]$ of acyclic $\Z/2$-complexes and its quotient by the following set of relations:
\begin{align}
\label{relat}
\{ [K_2]=[K_1\oplus K_3]\mid \exists \text{ a short exact sequence } 0\longrightarrow K_1\longrightarrow K_2\longrightarrow K_3\longrightarrow0\}.
\end{align}
If we endow $\Iso(\cc_{\Z/2,ac}({\ca}))$ with the addition given by direct sums, this quotient gives the \emph{Grothendieck monoid} $M_0(\cc_{\Z/2,ac}({\ca}))$.
Define the \emph{quantum affine space} $\A_{\Z/2,ac}({\ca})$ as the $\Q$-monoid algebra of the Grothendieck monoid $M_0(\cc_{\Z/2,ac}({\ca}))$, with the multiplication twisted by the inverse of the Euler form (see \eqref{eq:prod KK}),
i.e., the product of $[K_1],[K_2]\in\Iso(\cc_{\Z/2,ac}({\ca}))$ is defined as follows:
$$[K_1]\diamond[K_2]:=\frac{1}{\langle [K_1],[K_2]\rangle}[K_1\oplus K_2].$$

Define the \emph{quantum torus} $\T_{\Z/2,ac}({\ca})$ as the $\Q$-group algebra of $K_0(\cc_{\Z/2,ac}({\ca}))$, with the
multiplication twisted by the inverse of the Euler form as above. Namely,
$\T_{\Z/2,ac}(\ca)$ is the right and left localization of $\A_{\Z/2,ac}(\ca)$ with respect to the set $\Iso(\cc_{\Z/2,ac}({\ca}))$ (here $\A_{\Z/2,ac}(\ca)$ is considered with the twisted multiplication defined above). %with respect to the set $\Iso(\cc_{\Z/2,ac}({\ca}))$ with respect to the above twisted multiplication in $\A_{\Z/2,ac}({\ca})$.

We define $J_{\Z/2}$ to be the {\em $\Q$-linear subspace} of $\ch(\cc_{\Z/2}(\ca))$ spanned by
$$\{[L]-[K\oplus M]\mid \exists \text{ a short exact sequence }0\longrightarrow K \longrightarrow L\longrightarrow M \longrightarrow 0\mbox{ with }K\mbox{ acyclic} \}.$$
Note that
$$J_{\Z/2}={\rm Span}_\Q \Omega_{\Z/2}.$$
%,\qquad \qquad I_{\Z/2}=( J_{\Z/2}),$$
%i.e., $I_{\Z/2}$ is the two-sided ideal generated by $J_{\Z/2}$.
Similar to \cite{Gor13}, we define an $\A_{\Z/2,ac}(\ca)$-bimodule structure on the quotient space $\ch(\cc_{\Z/2}(\ca))/J_{\Z/2}$ by setting
\begin{equation}
\label{definition of bimodule}
[K]\diamond[M]:=\frac{1}{\langle [K],[M]\rangle}[K\oplus M],\quad [M]\diamond[K]:=\frac{1}{\langle [M],[K]\rangle}[M\oplus K]
\end{equation}
for any $[K]\in \A_{\Z/2,ac}(\ca)$ and $[M]\in \ch(\cc_{\Z/2}(\ca))/J_{\Z/2}$. Our notation is justified by Lemma \ref{remark coincide of the structure of two bimodule} below.
%cf. \eqref{eq:KMMK}.
Furthermore, we set
\begin{align}
\label{def:bimodule}
\cm_{\Z/2}(\ca):= \T_{\Z/2,ac}(\ca)\otimes_{\A_{\Z/2,ac}(\ca)}  \ch(\cc_{\Z/2}(\ca))/J_{\Z/2} \otimes_{\A_{\Z/2,ac}(\ca)} \T_{\Z/2,ac}(\ca),
\end{align}
which is a bimodule over the quantum torus $\T_{\Z/2,ac}(\ca)$. %We refer to \eqref{definition of bimodule} and \eqref{eq:KMMK} for the $\T_{\Z/2,ac}(\ca)$-bimodule structures of $\cm_{\Z/2}(\ca)$ and $\cs\cd\ch(\ca)$.

Let $\widehat{\Upsilon}: \ch(\cc_{\Z/2}(\ca))\rightarrow \ch(\cc_{\Z/2}(\ca))/J_{\Z/2}$ be the natural projection, and $\Upsilon: \ch(\cc_{\Z/2}(\ca))\rightarrow \cm_{\Z/2}(\ca)$ be the composition
\begin{align}
\label{eq:morUpsilon}
\ch(\cc_{\Z/2}(\ca))\stackrel{\widehat{\Upsilon}}{\longrightarrow} \ch(\cc_{\Z/2}(\ca))/J_{\Z/2} \stackrel{\rm nat.}{\longrightarrow} \cm_{\Z/2}(\ca).
\end{align}

\begin{remark}
\label{rem:concellation}
For any $a\in\ch(\cc_{\Z/2}(\ca))$, we have $\Upsilon(a)=0$ if and only if there exists $[K]\in\Iso(\cc_{\Z/2,ac}({\ca}))$  such that $\widehat{\Upsilon}(a)\diamond [K]=0$ in $\ch(\cc_{\Z/2}(\ca))/J_{\Z/2}$.
\end{remark}

\begin{lemma}
\label{remark coincide of the structure of two bimodule}
%The $\T_{\Z/2,ac}(\ca)$-bimodule structure of
%$\cm_{\Z/2}(\ca)$ is induced by the Hall product.
For any $M\in\cc_{\Z/2}(\ca)$, $K\in\cc_{\Z/2,ac}(\ca)$,  we have  %the following.
\begin{align}
\label{eq:bimoduleHall 1}
\widehat{\Upsilon}([M]\diamond[K])=[M]\diamond[K],\qquad
\widehat{\Upsilon} ([K]\diamond [M])\diamond[\Cone(1_{M^*})]=[K]\diamond [M]\diamond [\Cone(1_{M^*})]
\end{align}
in $\ch(\cc_{\Z/2}(\ca))/J_{\Z/2}$.

As a consequence, we have
\begin{align}
\label{eq:bimoduleHall 2}
\Upsilon([M]\diamond [K])= [M]\diamond [K],\qquad
\Upsilon([K]\diamond [M])=[K]\diamond [M]
\end{align}
in $\cm_{\Z/2}(\ca)$.
\end{lemma}

\begin{proof}
%We denote by $\Upsilon:\ch(\cc_{\Z/2}(\ca))\longrightarrow \ch(\cc_{\Z/2}(\ca))/J_{\Z/2}$ the natural projection.
For $K\in\cc_{\Z/2,ac}(\ca)$ and $M\in\cc_{\Z/2}(\ca)$, we have
\begin{eqnarray*}
\widehat{\Upsilon}([M]\diamond[K])&=&\widehat{\Upsilon}\big(\sum_{[N]\in\Iso(\cc_{\Z/2}(\ca))} \frac{|\Ext^1_{\cc_{\Z/2}(\ca)}(M,K)_N|}{|\Hom_{\cc_{\Z/2}(\ca)}(M,K)|}[N]\big)\\
&=&\frac{1}{\langle [M],[K]\rangle}[M\oplus K]\\
&=& [M]\diamond[K],
\end{eqnarray*}
where the second equality follows from $\widehat{\Upsilon}([N])= [M\oplus K]$ in $\ch(\cc_{\Z/2}(\ca))/J_{\Z/2}$ for any $N$ such that $|\Ext^1_{\cc_{\Z/2}(\ca)}(M,K)_N|\neq0$.
So we get the first identity in \eqref{eq:bimoduleHall 1}.

To prove the second identity in \eqref{eq:bimoduleHall 1}, we note that $\Omega_{\Z/2}\subseteq J_{\Z/2}$. We use the same method as in the proof of \eqref{equation proposition localizaition of Ringel-Hall algebra 1} to show that
\begin{eqnarray*}
\widehat{\Upsilon}([K]\diamond [M]\diamond [\Cone(1_{M^*})])=\frac{\langle [M],[K]\rangle}{\langle [K],[M]\rangle}\widehat{\Upsilon}([M]\diamond [K]\diamond [\Cone(1_{M^*})]).
\end{eqnarray*}
In fact, for any $[N]\in\Iso(\cc_{\Z/2}(\ca))$ such that $|\Ext^1_{\cc_{\Z/2}(\ca)}(K,M)_N|\neq0$, by Lemma \ref{lem:existC}, we have
$$[N\oplus \Cone(1_{M^*})]-[\Cone(1_{M^*})\oplus K\oplus M]\in J_{\Z/2}.$$
So by the first identity in \eqref{eq:bimoduleHall 1}, we have
\begin{align*}
&\widehat{\Upsilon}([K]\diamond [M]\diamond [\Cone(1_{M^*})])
\\
=&\sum_{[N]\in\Iso(\cc_{\Z/2}(\ca))} \frac{|\Ext^1_{\cc_{\Z/2}(\ca)}(K,M)_N|}{|\Hom_{\cc_{\Z/2}(\ca)}(K,M)|}[N]\diamond [\Cone(1_{M^*})]
\\
=&\sum_{[N]\in\Iso(\cc_{\Z/2}(\ca))} \frac{|\Ext^1_{\cc_{\Z/2}(\ca)}(K,M)_N|}{|\Hom_{\cc_{\Z/2}(\ca)}(K,M)|}\frac{1}{\langle [N], [\Cone(1_{M^*})] \rangle} [N\oplus \Cone(1_{M^*})]
\\
=& \sum_{[N]\in\Iso(\cc_{\Z/2}(\ca))} \frac{|\Ext^1_{\cc_{\Z/2}(\ca)}(K,M)_N|}{|\Hom_{\cc_{\Z/2}(\ca)}(K,M)|}\frac{1}{\langle [K\oplus M], [\Cone(1_{M^*})] \rangle} [\Cone(1_{M^*})\oplus K\oplus M]
\\
=& \frac{1}{\langle [K],[M]\rangle \langle [K\oplus M], [\Cone(1_{M^*})] \rangle} [\Cone(1_{M^*})\oplus K\oplus M]
\\
=& \frac{ \langle [M], [K\oplus \Cone(1_{M^*})]\rangle}{\langle [K],[M]\rangle\langle [K\oplus M], [\Cone(1_{M^*})] \rangle} [M]\diamond [K\oplus \Cone(1_{M^*})]
\\
=&\frac{\langle [M],[K]\rangle}{\langle [K],[M]\rangle}\widehat{\Upsilon}([M]\diamond [K]\diamond [\Cone(1_{M^*})])
\end{align*}
in $\ch(\cc_{\Z/2}(\ca))/J_{\Z/2}$.
From this equality, we have
\begin{eqnarray*}
\widehat{\Upsilon}([K]\diamond [M])\diamond [\Cone(1_{M^*})]&=&\widehat{\Upsilon}([K]\diamond [M]\diamond [\Cone(1_{M^*})])\\
&=&\frac{\langle [M],[K]\rangle}{\langle [K],[M]\rangle}\widehat{\Upsilon}([M]\diamond [K]\diamond [\Cone(1_{M^*})])\\
&=&\frac{\langle [M],[K]\rangle}{\langle [K],[M]\rangle}\widehat{\Upsilon}([M]\diamond [K])\diamond [\Cone(1_{M^*})]\\
&=&\frac{1}{\langle [K],[M]\rangle}\widehat{\Upsilon}([M\oplus K])\diamond [\Cone(1_{M^*})]
\\
&=&[K]\diamond [M]\diamond [\Cone(1_{M^*})]
\end{eqnarray*}
in $\ch(\cc_{\Z/2}(\ca))/J_{\Z/2}$.

For the last statement, we only need to prove the second formula. From above, we have
$\widehat{\Upsilon}([K]\diamond [M])\diamond [\Cone(1_{M^*})]=[K]\diamond [M]\diamond [\Cone(1_{M^*})]$ in $\ch(\cc_{\Z/2}(\ca))/J_{\Z/2}$.
Since $[\Cone(1_{M^*})]$ is invertible in $\T_{\Z/2,ac}(\ca)$, it follows that
$$\Upsilon([K]\diamond [M])=[K]\diamond [M]$$
in
$\cm_{\Z/2}(\ca)$.
%where we also use $\Upsilon:\ch(\cc_{\Z/2}(\ca))\longrightarrow \cm_{\Z/2}(\ca)$ to denote the composition
%$$\ch(\cc_{\Z/2}(\ca))\stackrel{\Upsilon}{\longrightarrow} \ch(\cc_{\Z/2}(\ca))/J_{\Z/2} \stackrel{\rm nat.}{\longrightarrow} \cm_{\Z/2}(\ca).$$
\end{proof}

\begin{remark}
From  \eqref{eq:bimoduleHall 2}, we obtain that the $\T_{\Z/2,ac}(\ca)$-bimodule structure  of
$\cm_{\Z/2}(\ca)$ is induced by the Hall product, namely the multiplication of the Ringel-Hall algebra $\ch(\cc_{\Z/2}(\ca))$. However, although the right $\A_{\Z/2,ac}(\ca)$-module structure on $\ch(\cc_{\Z/2}(\ca))/J_{\Z/2}$ is still induced by the Hall product, the left  $\A_{\Z/2,ac}(\ca)$-module structure on  $\ch(\cc_{\Z/2}(\ca))/J_{\Z/2}$ is not. It is not true that $\widehat{\Upsilon} ([K]\diamond [M])=[K]\diamond [M]$ for all $M\in\cc_{\Z/2}(\ca)$, $K\in\cc_{\Z/2,ac}(\ca)$, i.e., we cannot remove $[\Cone(1_{M^*})]$ form the second identity in \eqref{eq:bimoduleHall 1}.
\end{remark}

\subsection{A basis of $\cm_{\Z/2}(\ca)$}
\label{subsec:basis}

\begin{proposition}
\label{lemma canonical form of complex}
Let $M=(\xymatrix{ M^0 \ar@<0.5ex>[r]^{f^0}& M^1 \ar@<0.5ex>[l]^{f^1}  })$ and $H^0(M),H^1(M)$ be the homology groups. In  $\cm_{\Z/2}(\ca)$, we have
\begin{align*}
[M]=&\langle \widehat{\Im f^0} , \widehat{\Im f^1}\rangle \langle\widehat{\Im f^0}, \widehat{H^0(M)} \rangle \langle\widehat{ \Im f^1}, \widehat{H^1(M) }\rangle[K_{\Im f^0}]\diamond [K_{\Im f^1}^*]\diamond [C_{H^0(M)}^*\oplus C_{H^1(M)}].
\end{align*}
\end{proposition}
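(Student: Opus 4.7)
The plan is to reduce the claim to a single equation of classes of complexes in $\cm\ch_{\Z/2}(\ca)$ and then prove that equation via a chain of short exact sequences. Set $\alpha=\widehat{\Im f^0}$, $\beta=\widehat{\Im f^1}$, $\gamma=\widehat{H^0(M)}$, $\delta=\widehat{H^1(M)}$, and write $N=K_{\Im f^0}\oplus K_{\Im f^1}^{*}\oplus C_{H^0(M)}^{*}\oplus C_{H^1(M)}$.

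First, I will expand the right-hand side using the multiplication rules of Lemma \ref{lemma multiplcation in quotient algebra} and Lemma \ref{corollary hall multiplicatoin of acyclic complexes} for Hall products involving acyclic factors, together with the explicit Euler-form evaluations of Proposition \ref{lema euler form} and Corollary \ref{lemma coincide of Euler forms}. Since all three factors on the right are themselves direct sums of acyclic complexes and of complexes with zero differentials, the triple Hall product collapses into a single class
$$[K_{\Im f^0}]\diamond [K_{\Im f^1}^{*}]\diamond [C_{H^0(M)}^{*}\oplus C_{H^1(M)}]
=\frac{[N]}{\langle\alpha,\beta\rangle\langle\alpha,\gamma\rangle\langle\beta,\delta\rangle}.$$
The scalar prescribed in the proposition is exactly the reciprocal of this denominator, so the proposition reduces to proving the identity $[M]=[N]$ in $\cm\ch_{\Z/2}(\ca)$.

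To establish $[M]=[N]$, I propose to realize $M$ and $N$ as iterated acyclic extensions of $C_{H^0(M)}^{*}\oplus C_{H^1(M)}$ via parallel filtrations. For $M$, the descending chain of subcomplexes is: the subcomplex $L_1=(M^0\xrightarrow{f^0}\ker f^1)$ of $M$ with the other differential zero has quotient $C_{\Im f^1}$; its subcomplex $C_{\ker f^0}^{*}$ has quotient $L_3=(\Im f^0\hookrightarrow \ker f^1)$; and finally $L_3$ admits the acyclic subcomplex $K_{\Im f^0}$ with quotient $C_{H^1(M)}$. Applying the involution $*$ to the same construction produces the symmetric chain isolating $K_{\Im f^1}^{*}$ and $C_{H^0(M)}^{*}$. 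The last step of each chain has acyclic kernel and therefore gives the identity $[L_3]=[K_{\Im f^0}\oplus C_{H^1(M)}]$ (and its dual) in $\ch(\cc_{\Z/2}(\ca))/I'_{\Z/2}$ at once.

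The main obstacle is that the two intermediate short exact sequences in each chain do not have acyclic kernels, so they fail to give relations in $I'_{\Z/2}$ directly. To circumvent this, I invoke Lemma \ref{Galois functor is dense for hereditary categories} to replace $M$ by the auxiliary complex $\pi(U)$ sitting in the acyclic extension $0\to K_{\ker f^0}\to \pi(U)\to M\to 0$, and dually by $\pi(V)$ in $0\to M\to \pi(V)\to K_{\coker f^0}\to 0$. In $\pi(U)$ the relevant differential $l^0$ is already an admissible monomorphism, so the intermediate subcomplexes of the chain become admissible and the decomposition can be carried out entirely through short exact sequences with acyclic kernels. Since $[K_{\ker f^0}]$ and $[K_{\coker f^0}]$ are invertible in $\cm\ch_{\Z/2}(\ca)$ by Proposition \ref{proposition localizaition of Hall algebra}, the identity transfers back from $\pi(U)$ (respectively $\pi(V)$) to $M$. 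The remaining work is pure Euler-form bookkeeping via Proposition \ref{lema euler form}, Corollary \ref{lemma coincide of Euler forms}, and the commutation rule of Lemma \ref{corollary hall multiplicatoin of acyclic complexes}; this is mechanical but must be tracked with care.
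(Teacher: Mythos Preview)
Your reduction to the single identity $[M]=[N]$ is correct and is a clean way to package the scalar bookkeeping; the paper's Euler-form simplification at the end amounts to exactly this. Your core strategy---pass through $\pi(U)$ from Lemma~\ref{Galois functor is dense for hereditary categories} so that the decomposition can proceed entirely via short exact sequences with acyclic kernels---is precisely the paper's approach.

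Where the paper differs is in the actual filtration used. It does not try to transplant your three-step chain $M\supset L_1\supset L_3\supset\cdots$ into $\pi(U)$; instead it uses the single acyclic subcomplex $K_{M^0}\hookrightarrow\pi(U)$ (possible because $l^0$ is monic), whose quotient is the two-term complex $V=(\ker f^0\rightleftarrows M^1/\Im f^0)$ with forward differential zero. It then pads $V$ on the other side by $K^*_{\ker g^0}$ (via a pullback in $\ca$, using hereditarity once more) to produce an auxiliary $W$ which admits the acyclic subcomplex $K^*_{M^1/\Im f^0}$ with quotient exactly $C_{H^0(M)}^*\oplus C_{H^1(M)}$. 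This four-step chain $M\leftrightarrow\pi(U)\leftrightarrow V\leftrightarrow W\leftrightarrow C_{H^0(M)}^*\oplus C_{H^1(M)}$ handles both the $K_{\Im f^0}$ and the $K_{\Im f^1}^*$ contributions in one pass, so the paper never needs your appeal to the involution $*$ or to $\pi(V)$. Your sketch is sound in outline, but the sentence ``the intermediate subcomplexes of the chain become admissible'' is where you are vague: the acyclic subcomplexes one must actually use inside $\pi(U)$ (namely $K_{M^0}$, then later $K^*_{M^1/\Im f^0}$ inside $W$) are not the ones suggested by your original filtration of $M$, and the two-chain-plus-involution plan, as stated, would decompose $M^*$ rather than $M$ on the second pass.
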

\begin{proof}
From Lemma \ref{Galois functor is dense for hereditary categories} and its proof,
there is a short exact sequence
$$0\longrightarrow K_{\ker f^0} \longrightarrow \pi(U)\longrightarrow M\longrightarrow0,$$
such that
$$U=\dots \longrightarrow0\longrightarrow M^0\stackrel{l^0}{\longrightarrow}Z^0\longrightarrow \ker f^0\longrightarrow0\longrightarrow\cdots$$
with $l^0$ injective.
Then
\begin{equation}\label{equation 1 in Lemma canonical form of complex}
[K_{\ker f^0}]\diamond [M]=\frac{1}{\langle [K_{\ker f^0}], [M]\rangle}[\pi(U)].
\end{equation}
Note that $l^0$ is injective. So there is a short exact sequence
$$0\longrightarrow K_{M^0}\longrightarrow \pi(U)\longrightarrow V:=(\xymatrix{ \ker f^0 \ar@<0.5ex>[r]^{0\quad}& M^1/\Im f^0 \ar@<0.5ex>[l]^{g^0\quad}  })\longrightarrow0,$$
where $g^0$ is induced by $f^1$.
Then
\begin{equation}\label{equation 2 in Lemma canonical form of complex}
[K_{M^0}]\diamond [V]=\frac{1}{\langle [K_{M^0} ],[V]\rangle} [\pi(U)].
\end{equation}

By applying $\Hom_\ca(H^0(M),-)$ to the short exact sequence $$0\longrightarrow \ker g^0\longrightarrow M^1/\Im f^0\longrightarrow \Im g^0\longrightarrow0,$$
we obtain the following commutative diagram:
\[\xymatrix{ \ker g^0 \ar[r] \ar@{=}[d]& M^1/\Im f^0 \ar[r] \ar@{.>}[d] & \Im g^0\cong \Im f^1 \ar[d] \\
\ker g^0\ar@{.>}[r]& X^0 \ar@{.>}[r] \ar@{.>}[d] & \ker f^0 \ar[d]\\
& H^0(M)\ar@{=}[r]  &H^0(M) }\]
So there is a short exact sequence
$$0\longrightarrow K_{\ker g^0}^* \longrightarrow W:=(\xymatrix{ X^0  \ar@<0.5ex>[r]^{0\quad\quad\quad}& \ker g^0\oplus M^1/\Im f^0 \ar@<0.5ex>[l]  })  \longrightarrow V\longrightarrow0,$$
and then
\begin{equation}\label{equation 3 in Lemma canonical form of complex}
[K^*_{\ker g^0}]\diamond [V] =\frac{1}{\langle [K^*_{\ker g^0}], [V]\rangle} [W].
\end{equation}

There is also a short exact sequence
$$0\longrightarrow K_{M^1/\Im f^0}^* \longrightarrow W \longrightarrow [C_{H^0(M)}^*\oplus C_{H^1(M)}] \longrightarrow0,$$
and then
\begin{equation}\label{equation 4 in Lemma canonical form of complex}
[K^*_{M^1/ \Im f^0}]\diamond [C_{H^0(M)}^*\oplus C_{H^1(M)}]  =\frac{1}{\langle [K^*_{M^1/ \Im f^0}],[C_{H^0(M)}^*\oplus C_{H^1(M)} ]\rangle} [W].
\end{equation}

From (\ref{equation 1 in Lemma canonical form of complex})-(\ref{equation 4 in Lemma canonical form of complex}), we obtain
\begin{align*}
[M]=& \frac{1}{\langle [K_{\ker f^0}], [M]\rangle}[K_{\ker f^0}]^{-1}\diamond[\pi(U)]\\
=&  \frac{\langle [K_{M^0}] ,[V]\rangle}{\langle [K_{\ker f^0}], [M]\rangle}[K_{\ker f^0}]^{-1}\diamond [K_{M^0}]\diamond [V]\\
=& \frac{\langle [K_{M^0}] ,[V]\rangle \langle [K_{\ker f^0}],[ K_{\Im f^0} ]\rangle}{\langle [K_{\ker f^0}], [M]\rangle}[K_{\Im f^0}]\diamond [V]\\
=& \frac{\langle [K_{M^0}] ,[V]\rangle \langle [K_{\ker f^0}], [K_{\Im f^0}] \rangle}{\langle [K_{\ker f^0}], [M]\rangle   \langle [K_{\ker g^0}^*], [V]\rangle}[K_{\Im f^0}]\diamond [K_{\ker g^0}^*]^{-1} \diamond [W]
\\
=&  \frac{\langle [K_{M^0}] ,[V]\rangle \langle [K_{\ker f^0}], [K_{\Im f^0} ]\rangle\langle [K^*_{M^1/ \Im f^0}],[C_{H^0(M)}^*\oplus C_{H^1(M)} ]\rangle}{\langle [K_{\ker f^0}], [M]\rangle   \langle [K_{\ker g^0}^*], [V]\rangle}\\
%\displaybreak
&[K_{\Im f^0}]\diamond [K_{\ker g^0}^*]^{-1} \diamond [K^*_{M^1/ \Im f^0}]\diamond [C_{H^0(M)}^*\oplus C_{H^1(M)}]\\
=&\frac{\langle [K_{M^0}] ,[V]\rangle \langle [K_{\ker f^0}], [K_{\Im f^0}] \rangle\langle [K^*_{M^1/ \Im f^0}],[C_{H^0(M)}^*\oplus C_{H^1(M)} ]\rangle\langle [K^*_{\ker g^0}], [ K^*_{\Im f^1}]\rangle}{\langle [K_{\ker f^0}], [M]\rangle   \langle [K_{\ker g^0}^*], [V]\rangle}\\
&[K_{\Im f^0}]\diamond [K^*_{\Im f^1}]\diamond [C_{H^0(M)}^*\oplus C_{H^1(M)}].
\end{align*}
Note that
\begin{eqnarray*}
&&\frac{\langle [K_{M^0}] ,[V]\rangle \langle [K_{\ker f^0}],[ K_{\Im f^0}] \rangle\langle[ K^*_{M^1/ \Im f^0}],[C_{H^0(M)}^*\oplus C_{H^1(M)}] \rangle\langle [K^*_{\ker g^0}],  [K^*_{\Im f^1}]\rangle}{\langle [K_{\ker f^0}], [M]\rangle   \langle [K_{\ker g^0}^*], [V]\rangle}\\
&=&\langle [K_{\Im f^0}],[V] \rangle \langle [K_{\Im f^1}^*], [C_{H^0(M)}^*\oplus C_{H^1(M)}]\rangle\\
&=& \langle [K_{\Im f^0}] , [K^*_{\Im f^1}]\rangle \langle [K_{\Im f^0}], [C_{H^0(M)}^*\oplus C_{H^1(M)} ]\rangle \langle [K_{\Im f^1}^*], [C_{H^0(M)}^*\oplus C_{H^1(M)}]\rangle.
\end{eqnarray*}
Then
\begin{eqnarray*}
[M]&=&\langle [K_{\Im f^0}] ,[ K^*_{\Im f^1}]\rangle \langle [K_{\Im f^0}], [C_{H^0(M)}^*\oplus C_{H^1(M)}] \rangle \langle [K_{\Im f^1}^*], [C_{H^0(M)}^*\oplus C_{H^1(M)} ]\rangle\\
&&[K_{\Im f^0}]\diamond [K_{\Im f^1}^*]\diamond [C_{H^0(M)}^*\oplus C_{H^1(M)}]\\
&=&\langle \widehat{\Im f^0} , \widehat{\Im f^1}\rangle \langle\widehat{\Im f^0}, \widehat{H^0(M)} \rangle \langle\widehat{ \Im f^1}, \widehat{H^1(M) }\rangle[K_{\Im f^0}]\diamond [K_{\Im f^1}^*]\diamond [C_{H^0(M)}^*\oplus C_{H^1(M)}],
\end{eqnarray*}
where the last equality follows from Proposition \ref{lema euler form} and Corollary \ref{lemma coincide of Euler forms}.
\end{proof}

We shall use the following general result later.

\begin{lemma}
\label{proposition equivalences of some exact sequences}
Let $\cb$ be an abelian category. For objects $$U=(\xymatrix{ U^0 \ar@<0.5ex>[r]^{k^0}& U^1 \ar@<0.5ex>[l]^{k^1}  }),V=(\xymatrix{ V^0 \ar@<0.5ex>[r]^{f^0}& V^1 \ar@<0.5ex>[l]^{f^1}  }),W=(\xymatrix{ W^0 \ar@<0.5ex>[r]^{e^0}& W^1 \ar@<0.5ex>[l]^{e^1}  })\in\cc_{\Z/2}(\cb),$$
if there is a short exact sequence $0\rightarrow U\xrightarrow{h_1} V\xrightarrow{h_2} W\rightarrow0$, then the following statements are equivalent.
\begin{itemize}
\item[(i).] $0\rightarrow \Im k^i\xrightarrow{t_1^i} \Im f^i\xrightarrow{t_2^i} \Im e^i\rightarrow 0$ is  exact for  $i=0,1$.

\item[(ii).] $0\rightarrow \ker k^i\xrightarrow{s_1^i} \ker f^i\xrightarrow{s_2^i} \ker e^i\rightarrow 0$ is  exact for  $i=0,1$.

\item[(iii).] $0\rightarrow H^i(U)\xrightarrow{r_1^i} H^i(V)\xrightarrow{r_2^i} H^i(W)\rightarrow 0$ is  exact for  $i=0,1$.
\end{itemize}
Here the morphisms are induced by $h_1$ and $h_2$.
In particular, if $U$ or $W$ is acyclic,
then
$\widehat{\Im f^0}=\widehat{\Im k^0}+ \widehat{\Im e^0}$ and $\widehat{\Im f^1}=\widehat{\Im k^1}+ \widehat{\Im e^1}$ in $K_0(\cb)$.
\end{lemma}

\begin{proof}
The analogous result for bounded complexes is known (see e.g.  \cite[Lemma 9.2.2]{EJ}), and its proof can be applied in the $\Z/2$-graded case. We give a proof here for convenience.

%By the Snake Lemma, the induced sequence
%$$ 0\longrightarrow \ker k^i\xrightarrow{s_1^i} \ker f^i\xrightarrow{s_2^i} \ker e^i$$
%is exact for $i=0,1$.
One can easily get the following commutative diagram:
\[\xymatrix{ 0\ar[r] & \ker k^i \ar[r]^{s_1^i}\ar@{>->}[d] & \ker f^i \ar[r]^{s_2^i}\ar@{>->}[d] & \ker e^i\ar@{>->}[d]&\\
 0\ar[r]& U^i\ar[r]\ar@{->>}[d] &V^i \ar[r]\ar@{->>}[d] & W^i\ar@{->>}[d]\ar[r]&0\\
 &\Im k^i\ar[r]^{t_1^i} &\Im f^i \ar[r]^{t_2^i}& \Im e^i\ar[r]&0}\]
with all rows and columns exact. Then the equivalence of (i) and (ii) follows from the Snake Lemma. %Furthermore, $t_2^i$ is surjective for $i=0,1$.

We also have the following commutative diagram:
\[\xymatrix{
&\Im k^i\ar[r]^{t_1^i} \ar@{>->}[d] & \Im f^i \ar[r]^{t_2^i} \ar@{>->}[d] & \Im e^i \ar@{>->}[d] \ar[r]&0\\
0\ar[r]& \ker k^{i+1} \ar[r]^{s_1^i} \ar@{->>}[d] & \ker f^{i+1}\ar[r]^{s_2^i} \ar@{->>}[d] & \ker e^{i+1}\ar@{->>}[d] &\\
&H^{i+1}(U)\ar[r]^{r_1^i} & H^{i+1}(V)\ar[r]^{r_2^i} &H^{i+1}(W)& }\]
with all rows and columns exact.

(iii)$\Rightarrow$(ii): Since $r_2^i$ and $t_2^i$ are surjective, we obtain that $s_2^i$ is also surjective, and then
$0\rightarrow \ker k^i\rightarrow \ker f^i\rightarrow \ker e^i\rightarrow 0$ is short exact for $i=0,1$.

(ii)$\Rightarrow$(iii): Since $0\rightarrow \ker k^i\rightarrow \ker f^i\rightarrow \ker e^i\rightarrow 0$ is short exact for $i=0,1$, from the equivalence of (i) and (ii), we obtain that
$0\rightarrow \Im k^i\rightarrow \Im f^i\rightarrow \Im e^i\rightarrow 0$ is short exact for $i=0,1$. So $0\rightarrow H^i(U)\xrightarrow{r_1^i} H^i(V)\xrightarrow{r_2^i} H^i(W)\rightarrow 0$ is short exact for $i=0,1$.

In particular, if $U$ or $W$ is acyclic, then (iii) holds, which implies (i). So $\widehat{\Im f^i}=\widehat{\Im k^i}+ \widehat{\Im e^i}$ for $i=0,1$.
\end{proof}

Now we go back to the category $\ca$.
For any $A,B\in\ca$ with $\widehat{A}=\widehat{B}$, it is easy to see that $[K_A]=[K_B]$, $[K_A^*]=[K_B^*]$ in $\cm_{\Z/2}(\ca)$.
For any $\alpha\in K_0(\ca)$, there exist $A,B\in\ca$ such that $\alpha=\widehat{A}-\widehat{B}$, and we set
\begin{align*}
K_\alpha:=\frac{1}{\langle \alpha,\widehat{B}\rangle}[K_A]\diamond [K_B]^{-1}.
\end{align*}
Then $K_\alpha$ is well defined in  $\cm_{\Z/2}(\ca)$.
In fact, we only need to check that $K_\alpha$ does not depend on the choice of $[A],[B]\in\Iso(\ca)$ for $\alpha\in K_0(\ca)$.
If $\alpha=\widehat{A}-\widehat{B}=\widehat{A'}-\widehat{B'}$ for $A,B,A',B'\in\ca$,
then
\begin{align*}
&\big(\frac{1}{\langle \alpha,\widehat{B}\rangle}[K_A]\diamond [K_B]^{-1}-\frac{1}{\langle \alpha,\widehat{B'}\rangle}[K_{A'}]\diamond [K_{B'}]^{-1} \big)\diamond [K_B]\diamond [K_{B'}]\\
=&\frac{1}{\langle \alpha,\widehat{B}\rangle}[K_A]\diamond [K_{B'}]-\frac{1}{\langle \alpha,\widehat{B'}\rangle}\frac{\langle \widehat{B'},\widehat{B}\rangle}{\langle\widehat{B},\widehat{B'}\rangle}
[K_{A'}]\diamond [K_{B}]\\
=&\frac{1}{\langle \alpha,\widehat{B}\rangle}\frac{1}{\langle \widehat{A},\widehat{B'}\rangle}[K_A\oplus K_{B'}]-
\frac{1}{\langle \alpha,\widehat{B'}\rangle}\frac{\langle \widehat{B'},\widehat{B}\rangle}{\langle\widehat{B},\widehat{B'}\rangle}\frac{1}{\langle \widehat{A'},\widehat{B}\rangle} [K_{A'}\oplus K_B]\\
=&\frac{1}{\langle \alpha,\widehat{B}\rangle}\frac{1}{\langle \widehat{A},\widehat{B'}\rangle}[K_A\oplus K_{B'}]-\frac{1}{\langle \widehat{A},\widehat{B'}\rangle}\frac{1}{\langle \alpha,\widehat{B}\rangle}[K_{A'}\oplus K_B].
\end{align*}
Since $\widehat{A'}+\widehat{B}=\widehat{A}+\widehat{B'}$ in $K_0(\ca)$, we obtain that $[K_{A}\oplus K_{B'}]=[K_{A'}\oplus K_B]$ in  $\cm_{\Z/2}(\ca)$. So
\begin{align*}
\big(\frac{1}{\langle \alpha,\widehat{B}\rangle}[K_A]\diamond [K_B]^{-1}-\frac{1}{\langle \alpha,\widehat{B'}\rangle}[K_{A'}]\diamond [K_{B'}]^{-1} \big)\diamond [K_B]\diamond [K_{B'}]=0,
\end{align*}
and then
$$\frac{1}{\langle \alpha,\widehat{B}\rangle}[K_A]\diamond [K_B]^{-1}-\frac{1}{\langle \alpha,\widehat{B'}\rangle}[K_{A'}]\diamond [K_{B'}]^{-1}=0$$
in $\cm_{\Z/2}(\ca)$. So $K_\alpha$ is well defined.

Similarly, for any $\alpha\in K_0(\ca)$, we set
$K_\alpha^*:=\frac{1}{\langle \alpha,\widehat{B}\rangle}[K_A^*]\diamond [K_B^*]^{-1}$, where
$A,B\in\ca$ satisfy $\alpha=\widehat{A}-\widehat{B}$.

%Note that $\cc_{\Z/2,ac}(\ca)$ is an extension-closed subcategory of $\cc_{\Z/2,ac}(\ca)$ and so an exact category.
%Let $\ch(\cc_{\Z/2,ac}(\ca))$ be the Ringel-Hall algebra of $\cc_{\Z/2,ac}(\ca)$. Then $\ch(\cc_{\Z/2,ac}(\ca))$ is a subalgebra of $\ch(\cc_{\Z/2}(\ca))$, which implies that %$\ch(\cc_{\Z/2}(\ca))$ is a
%$\ch(\cc_{\Z/2,ac}(\ca))$-bimodule.

We set $\G$ to be the following set
$$\G:=\{(\alpha,\beta,[A],[B])\mid \alpha,\beta\in K_0(\ca), [A],[B]\in \Iso(\ca)\}.$$
For any $[A],[B]\in \Iso(\ca)$, we define $\G(A,B):=\{ (\alpha,\beta,[A],[B])\mid \alpha,\beta\in K_0(\ca)\}$. Then $\G(A,B)$ is an abelian group with its group structure induced by that of $K_0(\ca)$. It is natural to identify $\G(0,0)$ with the group $K_0(\ca)\times K_0(\ca)$.
Then
$\ch(\cc_{\Z/2}(\ca))$ is a $\G$-graded vector space, i.e.,
$$\ch(\cc_{\Z/2}(\ca))=\bigoplus_{(\alpha,\beta,[A],[B])\in \G} \Big(\bigoplus_{\stackrel{ \widehat{ \Im f^0 }=\alpha, \widehat{\Im f^1}=\beta,}{
H^0(M)\simeq A,H^1(M)\simeq B}} \Q[M]\Big),$$
where
$M= (\xymatrix{ M^0 \ar@<0.5ex>[r]^{f^0}& M^1 \ar@<0.5ex>[l]^{f^1}  })$.
In particular,
$$\ch(\cc_{\Z/2,ac}(\ca))=\bigoplus_{(\alpha,\beta)\in \G(0,0)}  \Big(\bigoplus_{ \widehat{ \Im d^0 }=\alpha, \widehat{\Im d^1}=\beta} \Q[K]\Big),$$
where $K= (\xymatrix{ K^0 \ar@<0.5ex>[r]^{d^0}& K^1 \ar@<0.5ex>[l]^{d^1}  })$ is acyclic.

\begin{lemma}
\label{lemma grade of Ringel-Hall algebra}
We have the following.
\begin{itemize}
\item[(i).] $\ch(\cc_{\Z/2,ac}(\ca))$ is a $\G(0,0)$-graded algebra.

\item[(ii).] Let $h\in \ch(\cc_{\Z/2}(\ca))$, $v\in\ch(\cc_{\Z/2,ac}(\ca))$ be homogeneous elements of degrees $(\alpha,\beta,[A],[B])\in \G$ and $(\gamma_1,\gamma_2)\in \G(0,0)$ respectively. Then $h\diamond v$ is homogeneous of degree $(\alpha+\gamma_1,\beta+\gamma_2,[A],[B])$.

\item[(iii).] $\cm_{\Z/2}(\ca)$ is a $\G$-graded vector space.
\end{itemize}
\end{lemma}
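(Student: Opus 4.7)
The plan is to route everything through a single observation: for any short exact sequence $0\to U\to V\to W\to 0$ in $\cc_{\Z/2}(\ca)$ with $U$ or $W$ acyclic, Proposition~\ref{proposition equivalences of some exact sequences} (together with the periodic long exact cohomology sequence) gives both $\widehat{\Im f_V^i}=\widehat{\Im k_U^i}+\widehat{\Im e_W^i}$ and $H^i(V)$ isomorphic to the homology of the non-acyclic end, for $i=0,1$. From this, each of (i)--(iii) follows quickly.

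For (i), I would first observe that when both $U=K_1$ and $W=K_2$ are acyclic, the long exact sequence forces $H^i(V)=0$, so $V$ is acyclic and $\cc_{\Z/2,ac}(\ca)$ is closed under extensions. Hence $\ch(\cc_{\Z/2,ac}(\ca))$ is indeed a subalgebra of $\ch(\cc_{\Z/2}(\ca))$, and the image-additivity identity assigns the product $[K_1]\diamond[K_2]$ the expected grade in $G(0,0)$. For (ii), by linearity I reduce to $h=[M]$ and $v=[K]$; the non-zero summands in $[M]\diamond[K]$ are indexed by extensions $0\to K\to V\to M\to 0$, and the observation identifies the grade of every such $[V]$ as $(\alpha+\gamma_1,\beta+\gamma_2,[A],[B])$.

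For (iii), the same observation applied to the defining relations $[L]-[K\oplus M]$ of $I'_{\Z/2}$ (for $0\to K\to L\to M\to 0$ with $K$ acyclic) shows that $[L]$ and $[K\oplus M]$ share the same grade: both have homology $H^i(M)$ and image classes $\widehat{\Im d_K^i}+\widehat{\Im f_M^i}$ for $i=0,1$. Therefore $I'_{\Z/2}$ is a $G$-graded subspace and the quotient inherits a $G$-grading. Part (i), together with the well-definedness of the map $[K]\mapsto(\widehat{\Im d_K^0},\widehat{\Im d_K^1})$ on $K_0(\cc_{\Z/2,ac}(\ca))$ (itself an instance of the main observation), turns $\T_{\Z/2,ac}(\ca)$ into a $G(0,0)$-graded algebra. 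Part (ii) combined with~\eqref{definition of bimodule} shows the $\A_{\Z/2,ac}(\ca)$-bimodule action respects the grading, where $G(0,0)$ embeds into $G$ as the subgroup acting by addition on the first two coordinates. The tensor product defining $(\ch(\cc_{\Z/2}(\ca))/I'_{\Z/2})[S_{\Z/2}^{-1}]$ therefore inherits a $G$-grading.

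The main obstacle is the bookkeeping in part (iii), where one must track the $G$-grading consistently through the quotient by $I'_{\Z/2}$ and the two-sided localization. The substantive input, however, is the single identity in Proposition~\ref{proposition equivalences of some exact sequences}; once that is applied in each of the three settings, the grading assertions become essentially formal.
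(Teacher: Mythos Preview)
Your proposal is correct and follows essentially the same route as the paper: both reduce (i) and (ii) to the image-additivity and cohomology identification furnished by Proposition~\ref{proposition equivalences of some exact sequences} applied to an extension with one acyclic end, and both deduce (iii) by observing that the generators $[L]-[K\oplus M]$ of $I'_{\Z/2}$ are $G$-homogeneous for the same reason. The only differences are cosmetic: the paper notes that (i) is a special case of (ii) rather than treating it separately, and it dispatches the passage to the localization in (iii) with a one-line ``similar to (ii)'' where you spell out the compatibility of the $\T_{\Z/2,ac}(\ca)$-bimodule structure with the grading.
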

\begin{proof}
First, we prove (ii). Without loss of generality, we can assume $h=[M]\in \Iso(\cc_{\Z/2}(\ca))$, $v=[K]\in\Iso(\cc_{\Z/2,ac}(\ca))$.

For $ M=(\xymatrix{ M^0 \ar@<0.5ex>[r]^{e^0}& M^1 \ar@<0.5ex>[l]^{e^1}  })\in\cc_{\Z/2}(\ca)$ and $K=(\xymatrix{ K^0 \ar@<0.5ex>[r]^{f^0}& K^1 \ar@<0.5ex>[l]^{f^1}  }) \in \cc_{\Z/2,ac}(\ca)$, the degree of $[M]$ is $(\widehat{\Im e^0},\widehat{\Im e^1}, [H^0(M)],[H^1(M)])$ and the degree of $K$ is $(\widehat{\Im f^0},\widehat{\Im f^1})$. We have
\begin{equation*}
[M]\diamond[K]=\sum_{[N]\in\Iso(\cc_{\Z/2,ac}(\ca))} \frac{|\Ext^1_{\cc_{\Z/2}(\ca)}(M,K)_N|}{|\Hom_{\cc_{\Z/2}(\ca)}(M,K)|}[N],
\end{equation*}
where $N= (\xymatrix{ N^0 \ar@<0.5ex>[r]^{g_N^0}& N^1 \ar@<0.5ex>[l]^{g_N^1}})$.
If $|\Ext^1_{\cc_{\Z/2}(\ca)}(M,K)_N|\neq0$, then there exists a short exact sequence
$0\rightarrow K\rightarrow N\rightarrow M\rightarrow0$.
Lemma \ref{proposition equivalences of some exact sequences} shows that
$$\widehat{\Im g_N^0}=\widehat{\Im e^0} +\widehat{\Im f^0}, \qquad\widehat{\Im g_N^1}=\widehat{\Im e^1} +\widehat{\Im f^1},$$
which are the same for any $N$ such that $|\Ext^1_{\cc_{\Z/2}(\ca)}(M,K)_N|\neq0$. Thus, $[M]\diamond [K]$ is $\G$-homogeneous of degree $( \widehat{\Im e^0}+\widehat{\Im f^0},\widehat{\Im e^1}+\widehat{\Im f^1}, [H^0(M)],[H^1(M)])$.

(i) follows from (ii) as it is a special case of (ii).

(iii).
From Lemma \ref{proposition equivalences of some exact sequences}, for any short exact sequence $0\rightarrow K\rightarrow M\rightarrow N\rightarrow 0$ with $K$ acyclic, we obtain that $[M]-[K\oplus N]$ is $\G$-homogeneous, which implies that $J_{\Z/2}$ is a homogeneous subspace, and then
$\ch(\cc_{\Z/2}(\ca))/J_{\Z/2}$ is a $\G$-graded vector space. It follows that $\cm_{\Z/2}(\ca)$ is $\G$-graded.
\end{proof}

\begin{proposition}
\label{theorem basis of modified hall module}
$\cm_{\Z/2}(\ca)$ has a basis given by
\begin{align}
\label{basis}
\{[C_A\oplus C_B^*]\diamond K_\alpha\diamond K_\beta^*\mid  [A],[B]\in\Iso(\ca),\alpha,\beta\in K_0(\ca)\}.
\end{align}
\end{proposition}

\begin{proof}
Lemma \ref{lemma grade of Ringel-Hall algebra} shows that $\cm_{\Z/2}(\ca)$ is a $\G$-graded vector space, and the degree of $[C_A\oplus C_B^*]\diamond K_\alpha\diamond K_\beta^*$ is
$(\alpha,\beta,[A],[B])\in \G$, for any $\alpha,\beta\in K_0(\ca)$ and $[A],[B]\in \Iso(\ca)$.

We assume that
$$\sum_{\stackrel{\alpha,\beta\in K_0(\ca)}{ [A],[B]\in\Iso(\ca)}} a_{\alpha,\beta,[A],[B]} [C_A\oplus C_B^*]\diamond K_\alpha\diamond K_\beta^*=0$$
in $\cm_{\Z/2}(\ca)$,
where $a_{\alpha,\beta,[A],[B]} \in\Q$. Then $a_{\alpha,\beta,[A],[B]} [C_A\oplus C_B^*]\diamond K_\alpha\diamond K_\beta^*=0$ for any $\alpha,\beta\in K_0(\ca)$ and $[A],[B]\in \Iso(\ca)$, since
$\cm_{\Z/2}(\ca)$ is a $\G$-graded vector space.

For any $\alpha,\beta\in K_0(\ca)$, we choose the objects $A_1^\alpha,A_2^\alpha,B_1^\beta,B_2^\beta\in\ca$ %and fix them in the following,
such that
$\alpha=\widehat{A_1^\alpha}-\widehat{A_2^\alpha}$ and $\beta=\widehat{B_1^\beta}-\widehat{B_2^\beta}$.
By definition,
\begin{align*}K_\alpha=\frac{1}{\langle \alpha, \widehat{A_2^\alpha}\rangle}[K_{A_1^\alpha}]\diamond [K_{A_2^\alpha}]^{-1},\quad  K_\beta^*=\frac{1}{\langle \beta, \widehat{B_2^\beta}\rangle}[K_{B_1^\beta}^*]\diamond [K_{B_2^\beta}^*]^{-1}.
\end{align*}
So we compute in $\cm_{\Z/2}(\ca)$ that
\begin{eqnarray*}
&& a_{\alpha,\beta,[A],[B]} [C_A\oplus C_B^*]\diamond K_\alpha\diamond K_\beta^*\diamond [K_{A_2^\alpha}\oplus K_{B_2^\beta}^* ]\\
&=&a_{\alpha,\beta,[A],[B]} \frac{\langle  \widehat{A_2^\alpha},\widehat{B_2^\beta} \rangle }{\langle \alpha, \widehat{A_2^\alpha}\rangle\langle \beta, \widehat{B_2^\beta}\rangle}[C_A\oplus C_B^*]\diamond[K_{A_1^\alpha}]\diamond [K_{A_2^\alpha}]^{-1}\diamond [K_{B_1^\beta}^*]\diamond [K_{B_2^\beta}^*]^{-1}\diamond [K_{A_2^\alpha}]\diamond [K_{B_2^\beta}^* ]\\
&=&a_{\alpha,\beta,[A],[B]} \frac{\langle  \widehat{A_2^\alpha},\widehat{B_2^\beta} \rangle }{\langle \alpha, \widehat{A_2^\alpha}\rangle\langle \beta, \widehat{B_2^\beta}\rangle}\frac{\langle\widehat{A_2^\alpha},\beta \rangle}{\langle\beta,\widehat{A_2^\alpha}\rangle}[C_A\oplus C_B^*]\diamond[K_{A_1^\alpha}]\diamond [K_{B_1^\beta}^*].
\end{eqnarray*}
Set
\begin{align*}b_{\alpha,\beta}:=\frac{\langle  \widehat{A_2^\alpha},\widehat{B_2^\beta} \rangle}{\langle \alpha, \widehat{A_2^\alpha}\rangle\langle \beta, \widehat{B_2^\beta}\rangle } \frac{\langle\widehat{A_2^\alpha},\beta \rangle}{\langle\beta,\widehat{A_2^\alpha}\rangle}.
\end{align*}
Then we have
\begin{equation*}
a_{\alpha,\beta,[A],[B]}b_{\alpha,\beta}
[C_A\oplus C_B^*]\diamond [K_{A_1^\alpha}]\diamond
[K_{B_1^\beta}^*]=0
\end{equation*}
in $\cm_{\Z/2}(\ca)$.
The definition of $\cm_{\Z/2}(\ca)$ implies that there exists a complex $U\in \cc_{\Z/2,ac}(\ca)$ such that
\begin{align*}
a_{\alpha,\beta,[A],[B]}b_{\alpha,\beta}
[C_A\oplus C_B^*]\diamond [K_{A_1^\alpha}]\diamond
[K_{B_1^\beta}^*]\diamond[U]
=0,
\end{align*}
 and then
\begin{align*}
&a_{\alpha,\beta,[A],[B]}b_{\alpha,\beta}c_{\alpha,\beta,[A],[B]} [C_A\oplus C_B^*\oplus K_{A_1^\alpha}\oplus K_{B_1^\beta}^*\oplus U]
=0
\end{align*}
in $\ch(\cc_{\Z/2}(\ca))/J_{\Z/2}$, where
$$c_{\alpha,\beta,[A],[B]}= \frac{1}{\langle [C_A\oplus C_B^*],  [K_{A_1^\alpha}\oplus K_{B_1^\beta}^*\oplus U]\rangle\langle[K_{A_1^\alpha}],[K_{B_1^\beta}^*\oplus U] \rangle \langle
[K_{B_1^\beta}^*],[U]\rangle}.$$
Therefore,
\begin{eqnarray*}
%&&a_{\alpha,\beta,[A],[B]}b_{\alpha,\beta}
%[C_A\oplus C_B^*]\diamond [K_{A_1^\alpha}]\diamond
%[K_{B_1^\beta}^*]\diamond [U]\\
%&=&
a_{\alpha,\beta,[A],[B]}b_{\alpha,\beta}c_{\alpha,\beta,[A],[B]} [C_A\oplus C_B^*\oplus K_{A_1^\alpha}\oplus K_{B_1^\beta}^*\oplus U]
\in J_{\Z/2}
\end{eqnarray*}
in $\ch(\cc_{\Z/2}(\ca))$.
By definition of $ J_{\Z/2}$, we have the following equality in $\ch(\cc_{\Z/2}(\ca))$:
\begin{equation}
\label{eq:Mbasis}
a_{\alpha,\beta,[A],[B]}b_{\alpha,\beta}c_{\alpha,\beta,[A],[B]} [C_A\oplus C_B^*\oplus K_{A_1^\alpha}\oplus K_{B_1^\beta}^*\oplus U]=\sum_{i=1}^n c_i([M_i]-[K_i\oplus N_i]),
\end{equation}
where $c_i\in\Q$ and $K_i\in \cc_{\Z/2,ac}(\ca),M_i,N_i\in \cc_{\Z/2}(\ca)$ satisfy that there exist short exact sequences
$$0\longrightarrow K_i\longrightarrow M_i\longrightarrow N_i\longrightarrow0$$
for $1\leq i\leq n$.
Since the isomorphism classes form a $\Q$-basis of $\ch(\cc_{\Z/2}(\ca))$, there exists a $\Q$-linear map $\ch(\cc_{\Z/2}(\ca))\rightarrow \Q$, which sends each isomorphism class $[M]$ to $1$. Under this linear map, % mapspecializing all of them to $1\in\Q$,
we have
\begin{eqnarray*}
a_{\alpha,\beta,[A],[B]}b_{\alpha,\beta}c_{\alpha,\beta,[A],[B]}=\sum_{i=1}^n c_i(1-1)=0.
\end{eqnarray*}
Note that
$b_{\alpha,\beta} >0$ and $ c_{\alpha,\beta,[A],[B]} >0$. Thus, $a_{\alpha,\beta,[A],[B]}=0$.

Therefore, \eqref{basis} is a linearly independent subset of $\cm_{\Z/2}(\ca)$.

On the other hand, from Proposition \ref{lemma canonical form of complex} and \eqref{definition of bimodule}, we obtain that
\eqref{basis}
%$$\{[C_A\oplus C_B^*]\diamond K_\alpha\diamond K_\beta^*\mid \alpha,\beta\in K_0(\ca), [A],[B]\in\Iso(\ca)\}$$
spans the whole space,
and hence it is a basis of $\cm_{\Z/2}(\ca)$.
\end{proof}

%%%%%%%%%%%%%%%
\subsection{An isomorphism and a basis of $\cs\cd\ch(\ca)$}
\label{subsec:isomodules}

In this subsection, we prove that $\cs\cd\ch(\ca)$ and $\cm_{\Z/2}(\ca)$ are naturally isomorphic as $\mathbb{Q}$-linear spaces.
 Together with Proposition \ref{theorem basis of modified hall module}, one can get a basis of $\cs\cd\ch(\ca)$.

Recall
\begin{align*}
\Omega_{\Z/2}= \{[L]-&[K\oplus M]\mid\exists \text{ a short exact sequence }
\\
\notag
&0\longrightarrow K \longrightarrow L\longrightarrow M \longrightarrow 0\mbox{ with }K\mbox{ acyclic} \}
\end{align*}
defined in \eqref{set}, %, %$J_{\Z/2}={\rm Span}_\Q \Omega_{\Z/2}$,
the $\mathbb{Q}$-linear subspace $J_{\Z/2}$ of $\ch(\cc_{\Z/2}(\ca))$ spanned by $\Omega_{\Z/2}$, and %$I_{\Z/2}=(\Omega_{\Z/2})$,
the two-sided ideal $I_{\Z/2}$ of $\ch(\cc_{\Z/2}(\ca))$ generated by $\Omega_{\Z/2}$.
%Then the linear subspace $J_{\Z/2}$ and two-sided ideal $I_{\Z/2}$ satisfy
%$$J_{\Z/2}={\rm Span}_\Q\Omega_{\Z/2},\quad I_{\Z/2}=(\Omega_{\Z/2}).$$
Clearly $J_{\Z/2}\subset I_{\Z/2}$.

Note that $\cc_{\Z/2,ac}(\ca)$ is an extension-closed subcategory of $\cc_{\Z/2}(\ca)$ and so an exact category.
Let $\ch(\cc_{\Z/2,ac}(\ca))$ be the Ringel-Hall algebra of $\cc_{\Z/2,ac}(\ca)$. Let $I_{\Z/2,ac}$ be the two-sided ideal of $\ch(\cc_{\Z/2,ac}(\ca))$ generated by the following set
\begin{align*}
\{[K_2]-&[K_1\oplus K_3]\mid\exists \text{ a short exact sequence }
\\
\notag
&0\longrightarrow K_1 \longrightarrow K_2\longrightarrow K_3 \longrightarrow 0\mbox{ with }K_1, K_2, K_3\mbox{ acyclic} \}.
\end{align*}
 We consider the quotient algebra $\ch(\cc_{\Z/2,ac}(\ca))/I_{\Z/2,ac}$.
By \eqref{eq:prod KK},
one can get an algebra isomorphism
 \begin{align}
\ch(\cc_{\Z/2,ac}(\ca))/I_{\Z/2,ac}\cong \A_{\Z/2,ac}(\ca),
\end{align}
and we identify them in the following.
Then the natural embedding $\ch(\cc_{\Z/2,ac}(\ca))\rightarrow \ch(\cc_{\Z/2}(\ca))$ induces the following morphisms of algebras
\begin{align*}
\A_{\Z/2,ac}(\ca)\longrightarrow \ch(\cc_{\Z/2}(\ca))/I_{\Z/2},\qquad
\T_{\Z/2,ac}(\ca)\longrightarrow \cs\cd\ch(\ca).
\end{align*}
So %$\ch(\cc_{\Z/2}(\ca))/I_{\Z/2}$ is an $\A_{\Z/2,ac}(\ca)$-bimodule, and
$\cs\cd\ch(\ca)$ is a $\T_{\Z/2,ac}(\ca)$-bimodule (also an $\A_{\Z/2,ac}(\ca)$-bimodule), with its bimodule structure induced by the Hall product; see \eqref{eq:KMMK}. %, see \eqref{eq:KMMK}. % for its bimodule structure.}

Since $J_{\Z/2}\subset I_{\Z/2}$, there is a natural linear map $\Psi: \ch(\cc_{\Z/2}(\ca))/J_{\Z/2}\rightarrow \ch(\cc_{\Z/2}(\ca))/I_{\Z/2}$.
%\[\xymatrix{\Psi:\ch(\cc_{\Z/2}(\ca))\ar[rr]^{\text{projection}\quad }&&\ch(\cc_{\Z/2}(\ca))/I_{\Z/2} \ar[r]^{\quad\rm nat.}& \cs\cd\ch(\ca).}\]
%satisfies $\Psi(J_{\Z/2})=0$ since $J_{\Z/2}\subseteq I_{\Z/2}$.
Composed with the natural morphism $\ch(\cc_{\Z/2}(\ca))/I_{\Z/2}\rightarrow \cs\cd\ch(\ca)$,  $\Psi$
induces a linear map $\bar{\Psi}:\ch(\cc_{\Z/2}(\ca))/J_{\Z/2} \rightarrow\cs\cd\ch(\ca)$. By \eqref{definition of bimodule} and \eqref{eq:KMMK}, $\bar{\Psi}$ is a morphism of $\A_{\Z/2,ac}(\ca)$-bimodules. Then $\bar{\Psi}$ induces a morphism of $\T_{\Z/2,ac}(\ca)$-bimodules
\begin{align}
\label{eq:morPsi}
\widetilde{\Psi}:\cm_{\Z/2}(\ca)\longrightarrow  \cs\cd\ch(\ca),
\end{align}
which is the following composition:
%\begin{align*}
\[\xymatrix{\cm_{\Z/2}(\ca)%=\T_{\Z/2,ac}(\ca)\otimes_{\A_{\Z/2,ac}(\ca)}  \ch(\cc_{\Z/2}(\ca))/J_{\Z/2} \otimes_{\A_{\Z/2,ac}(\ca)} \T_{\Z/2,ac}(\ca)
\ar[rrr]^{\tiny\T_{\Z/2,ac}(\ca)\otimes \bar{\Psi}\otimes \T_{\Z/2,ac}(\ca)\qquad\qquad\qquad\qquad\qquad\qquad\,\,\,\,\,\,}&&&\T_{\Z/2,ac}(\ca)\otimes_{\A_{\Z/2,ac}(\ca)} \cs\cd\ch(\ca)\otimes_{\A_{\Z/2,ac}(\ca)} \T_{\Z/2,ac}(\ca)\ar[r]^{\qquad\qquad\qquad\qquad\quad\rm mult.} &\cs\cd\ch(\ca)
 }\]
 %\end{align*}
by \eqref{def:bimodule}. % and $\T_{\Z/2,ac}(\ca)\otimes_{\A_{\Z/2,ac}(\ca)}\cs\cd\ch(\ca)\otimes_{\A_{\Z/2,ac}(\ca)}\T_{\Z/2,ac}(\ca)\cong \cs\cd\ch(\ca)$.
In fact,
$\widetilde{\Psi}$ maps
$s_1^{-1}\otimes a\otimes s_2^{-1}$ to $s_1^{-1}as_2^{-1}$ for any $s_1^{-1}\otimes a\otimes s_2^{-1}\in \cm_{\Z/2}(\ca)$.

In the following, we shall prove that $\widetilde{\Psi}$ is in fact a linear bijection. For this purpose, we will then construct its inverse linear map.

As before, let $\Upsilon: \ch(\cc_{\Z/2}(\ca))\rightarrow \cm_{\Z/2}(\ca)$ be the composition
\begin{align}
\label{eq:morUpsilon}
\ch(\cc_{\Z/2}(\ca))\stackrel{\widehat{\Upsilon}}{\longrightarrow} \ch(\cc_{\Z/2}(\ca))/J_{\Z/2} \stackrel{\rm nat.}{\longrightarrow} \cm_{\Z/2}(\ca),
\end{align}
where $\widehat{\Upsilon}: \ch(\cc_{\Z/2}(\ca))\rightarrow \ch(\cc_{\Z/2}(\ca))/J_{\Z/2}$ is the natural projection.

First, we shall prove that $\Upsilon(I_{\Z/2})=0$ and so we can construct the inverse linear map of $\widetilde{\Psi}$. It is worth clarifying that $\widehat{\Upsilon}(I_{\Z/2})\neq 0$ in  $\ch(\cc_{\Z/2}(\ca))/J_{\Z/2}$.
%the desired inverse linear map.

\begin{lemma}\label{lemma left ideal}
Let $0\rightarrow K\xrightarrow{h_1} M\xrightarrow{h_2} N\rightarrow 0$ be a short exact sequence with $K$ acyclic. For any $L$, we have
$$ \Upsilon\big([L]\diamond([M]-[K\oplus N] )\big)=0.$$
%where $q:\ch(\cc_{\Z/2}(\ca))\longrightarrow  \cm_{\Z/2}(\ca)$ is the natural projection.
\end{lemma}
\begin{proof}
Proposition \ref{lemma canonical form of complex} yields that
\begin{eqnarray*}
\Upsilon\big([L]\diamond[M]\big)&=&\sum_{[V]\in \Iso(\cc_{\Z/2}(\ca))}\frac{|\Ext^1_{\cc_{\Z/2}(\ca)}(L,M)_V|}{|\Hom_{\cc_{\Z/2}(\ca)}(L,M)|} \Upsilon([V])\\
&=&\sum_{\tiny\begin{array}{cc}[A],[B]\in \Iso(\ca),\\ \alpha,\beta\in K_0(\ca)\end{array}}\sum_{\tiny\begin{array}{cc} [V],V= (\xymatrix{ V^0  \ar@<0.5ex>[r]^{f^0}& V^1 \ar@<0.5ex>[l]^{f^1}  })\\
H^0(V)\simeq A,H^1(V)\simeq B\\
\widehat{\Im f^0}=\alpha,\widehat{\Im f^1}=\beta \end{array}} \langle \alpha , \beta\rangle \langle \alpha, \widehat{A} \rangle \langle \beta, \widehat{B}\rangle  \frac{|\Ext^1_{\cc_{\Z/2}(\ca)}(L,M)_V|}{|\Hom_{\cc_{\Z/2}(\ca)}(L,M)|}\\
&& [K_{\alpha}]\diamond [K_{\beta}^*]\diamond [C_{A}^*\oplus C_{B}].
\end{eqnarray*}
Similarly,
\begin{eqnarray*}
&&\Upsilon\big([L]\diamond [K\oplus N]\big)
\\
&=&\sum_{[U]\in \Iso(\cc_{\Z/2}(\ca))}\frac{|\Ext^1_{\cc_{\Z/2}(\ca)}(L,K\oplus N)_U|}{|\Hom_{\cc_{\Z/2}(\ca)}(L,K\oplus N)|} \Upsilon([U])\\
&=&\sum_{\tiny\begin{array}{cc}[A],[B]\in \Iso(\ca),\\ \alpha,\beta\in K_0(\ca)\end{array}}\sum_{\tiny\begin{array}{cc} [U],U= (\xymatrix{ U^0  \ar@<0.5ex>[r]^{d^0}& U^1 \ar@<0.5ex>[l]^{d^1}  })\\
H^0(U)\simeq A,H^1(U)\simeq B\\
\widehat{\Im d^0}=\alpha,\widehat{\Im d^1}=\beta \end{array}}  \langle \alpha , \beta\rangle \langle \alpha, \widehat{A} \rangle \langle \beta, \widehat{B}\rangle  \frac{|\Ext^1_{\cc_{\Z/2}(\ca)}(L,K\oplus N)_U|}{|\Hom_{\cc_{\Z/2}(\ca)}(L,K\oplus N)|} \\
&&  [K_{\alpha}]\diamond [K_{\beta}^*]\diamond [C_{A}^*\oplus C_{B}].
\end{eqnarray*}
It suffices to prove
\begin{equation}\label{equation left ideal 1}
\sum_{\tiny\begin{array}{cc} [V],V= (\xymatrix{ V^0  \ar@<0.5ex>[r]^{f^0}& V^1 \ar@<0.5ex>[l]^{f^1}  })\\
H^0(V)\simeq A,H^1(V)\simeq B\\
\widehat{\Im f^0}=\alpha,\widehat{\Im f^1}=\beta \end{array}}\frac{|\Ext^1_{\cc_{\Z/2}(\ca)}(L,M)_V|}{|\Hom_{\cc_{\Z/2}(\ca)}(L,M)|} =\sum_{\tiny\begin{array}{cc} [U],U= (\xymatrix{ U^0  \ar@<0.5ex>[r]^{d^0}& U^1 \ar@<0.5ex>[l]^{d^1}  })\\
H^0(U)\simeq A,H^1(U)\simeq B\\
\widehat{\Im d^0}=\alpha,\widehat{\Im d^1}=\beta \end{array}} \frac{|\Ext^1_{\cc_{\Z/2}(\ca)}(L,K\oplus N)_U|}{|\Hom_{\cc_{\Z/2}(\ca)}(L,K\oplus N)|}
\end{equation}
for any $[A],[B]\in \Iso(\ca)$ and $\alpha,\beta\in K_0(\ca)$.

By applying $\Hom_{\cc_{\Z/2}(\ca)}(L,-)$ to the short exact sequence $0\rightarrow K\xrightarrow{h_1} M\xrightarrow{h_2}N\rightarrow0$, we obtain a long exact sequence
\begin{eqnarray*}
&&0\longrightarrow \Hom_{\cc_{\Z/2}(\ca)}(L,K)\longrightarrow \Hom_{\cc_{\Z/2}(\ca)}(L,M)\longrightarrow \Hom_{\cc_{\Z/2}(\ca)}(L,N)\longrightarrow\Ext^1_{\cc_{\Z/2}(\ca)}(L,K)\\
&&\longrightarrow\Ext^1_{\cc_{\Z/2}(\ca)}(L,M)\xrightarrow{\varphi} \Ext^1_{\cc_{\Z/2}(\ca)}(L,N)\longrightarrow\Ext^2_{\cc_{\Z/2}(\ca)}(L,K)=0.
\end{eqnarray*}
So $\varphi$ is surjective.
More precisely, for any $[\xi]\in \Ext^1_{\cc_{\Z/2}(\ca)}(L,M)$, it is represented by $0\rightarrow M\stackrel{f_1}{\rightarrow} V\stackrel{f_2}{\rightarrow} L\rightarrow0$. Then $\varphi([\xi])$ is represented by
$0\rightarrow N \stackrel{g_1}{\longrightarrow} W\stackrel{g_2}{\longrightarrow} L\rightarrow 0$, which is constructed by the pushout:
\[\xymatrix{ K \ar[r]^{h_1} \ar@{=}[d] & M\ar[r]^{h_2} \ar[d]^{f_1} & N \ar[d]^{g_1} \\
K\ar[r] & V\ar[r]\ar[d]^{f_2} & W \ar[d]^{g_2}\\
&L\ar@{=}[r] & L}\]
Let $K= (\xymatrix{ K^0 \ar@<0.5ex>[r]^{k^0}& K^1 \ar@<0.5ex>[l]^{k^1}  })$, $V=(\xymatrix{ V^0  \ar@<0.5ex>[r]^{f^0}& V^1 \ar@<0.5ex>[l]^{f^1}  })$ and $W= (\xymatrix{ W^0  \ar@<0.5ex>[r]^{e^0}& W^1 \ar@<0.5ex>[l]^{e^1}  })$ be the objects in the second row of the above commutative diagram.
Then Lemma \ref{proposition equivalences of some exact sequences} shows that $H^i(V)\cong H^i(W)$  and $\widehat{\Im e^i}+\widehat{\Im k^i}=\widehat{\Im f^i}$ for
$i=0,1$.

For any $[A],[B]\in \Iso(\ca)$ and $\alpha,\beta\in K_0(\ca)$,
set $$\cv_{[A],[B],\alpha,\beta}:=\bigsqcup_ {\tiny\begin{array}{cc} [V],V= (\xymatrix{ V^0  \ar@<0.5ex>[r]^{f^0}& V^1 \ar@<0.5ex>[l]^{f^1}  })\\
H^0(V)\simeq A,H^1(V)\simeq B\\
\widehat{\Im f^0}=\alpha,\widehat{\Im f^1}=\beta \end{array}} \Ext^1_{\cc_{\Z/2}(\ca)}(L,M)_V$$
and
$$\cw_{[A],[B],\alpha,\beta}:=\bigsqcup_ {\tiny\begin{array}{cc} [W],W= (\xymatrix{ W^0  \ar@<0.5ex>[r]^{e^0}& W^1 \ar@<0.5ex>[l]^{e^1}  })\\
H^0(W)\simeq A,H^1(W)\simeq B\\
\widehat{\Im e^0}=\alpha-\widehat{\Im k^0},\widehat{\Im e^1}=\beta-\widehat{\Im k^1} \end{array}} \Ext^1_{\cc_{\Z/2}(\ca)}(L,N)_W.$$
Since $K$ is acyclic, from the above pushout diagram and the surjectivity of $\varphi$,  one can obtain that $\varphi^{-1}(\cw_{[A],[B],\alpha,\beta})=\cv_{[A],[B],\alpha,\beta}$.
From the long exact sequence $0\rightarrow \Hom_{\cc_{\Z/2}(\ca)}(L,K)\rightarrow \Hom_{\cc_{\Z/2}(\ca)}(L,M)\rightarrow \Hom_{\cc_{\Z/2}(\ca)}(L,N)\rightarrow\Ext^1_{\cc_{\Z/2}(\ca)}(L,K)\rightarrow \ker\varphi\rightarrow0$,
we have
\begin{equation}
|\ker\varphi|=\frac{|\Ext^1_{\cc_{\Z/2}(\ca)}(L,K)||\Hom_{\cc_{\Z/2}(\ca)}(L,M)|}{|\Hom_{\cc_{\Z/2}(\ca)}(L,K)||\Hom_{\cc_{\Z/2}(\ca)}(L,N)|},
\end{equation}
and so
\begin{align}
\label{VW}
|\cv_{[A],[B],\alpha,\beta}|=|\cw_{[A],[B],\alpha,\beta}| |\ker\varphi|=|\cw_{[A],[B],\alpha,\beta}| \frac{|\Ext^1_{\cc_{\Z/2}(\ca)}(L,K)||\Hom_{\cc_{\Z/2}(\ca)}(L,M)|}{|\Hom_{\cc_{\Z/2}(\ca)}(L,K)||\Hom_{\cc_{\Z/2}(\ca)}(L,N)|}.
\end{align}

On the other hand, we set
$$\cu_{[A],[B],\alpha,\beta}:=\bigsqcup_ {\tiny\begin{array}{cc} [U],U= (\xymatrix{ U^0  \ar@<0.5ex>[r]^{d^0}& U^1 \ar@<0.5ex>[l]^{d^1}  })\\
H^0(U)\simeq A,H^1(U)\simeq B\\
\widehat{\Im d^0}=\alpha,\widehat{\Im d^1}=\beta \end{array}} \Ext^1_{\cc_{\Z/2}(\ca)}(L,K\oplus N)_U.$$
Similarly, by applying $\Hom_{\cc_{\Z/2}(\ca)}(L,-)$ to the split exact sequence $0\rightarrow K\rightarrow K\oplus N\rightarrow N\rightarrow0$, we can obtain that
\begin{align}
\label{UW}
|\cu_{[A],[B],\alpha,\beta}|=|\cw_{[A],[B],\alpha,\beta}| |\Ext^1_{\cc_{\Z/2}(\ca)}(L,K)|.
\end{align}

Combining \eqref{VW} and \eqref{UW}, we have
\begin{eqnarray*}
\mbox{LHS of (\ref{equation left ideal 1}) }&=&\frac{|\cv_{[A],[B],\alpha,\beta}|}{|\Hom_{\cc_{\Z/2}(\ca)}(L,M)|}\\
 &=&\frac{|\cw_{[A],[B],\alpha,\beta}| |\Ext^1_{\cc_{\Z/2}(\ca)}(L,K)|}{|\Hom_{\cc_{\Z/2}(\ca)}(L,K)||\Hom_{\cc_{\Z/2}(\ca)}(L,N)| }\\
 &=&\frac{|\cu_{[A],[B],\alpha,\beta}|}{ |\Hom_{\cc_{\Z/2}(\ca)}(L,K\oplus N)|}\\
 &=&\mbox{RHS of (\ref{equation left ideal 1})}.
\end{eqnarray*}
The proof is completed.
\end{proof}

\begin{lemma}
\label{lemma right ideal}
Let $0\rightarrow K\xrightarrow{h_1} M\xrightarrow{h_2} N\rightarrow0 $ be a short exact sequence with $K$ acyclic. For any $L$, we have
$$\Upsilon\big(([M]-[K\oplus N] )\diamond [L]\big)=0.$$
%where $\Upsilon:\ch(\cc_2(\ca))\longrightarrow  \cm_{\Z/2}(\ca)$ is the natural projection.
\end{lemma}

\begin{proof}
Similar to Lemma \ref{lemma left ideal}, it is enough to prove
\begin{equation}
\label{equation right ideal 1}
\sum_{\tiny\begin{array}{cc} [V],V= (\xymatrix{ V^0  \ar@<0.5ex>[r]^{f^0}& V^1 \ar@<0.5ex>[l]^{f^1}  })\\
H^0(V)\simeq A,H^1(V)\simeq B\\
\widehat{\Im f^0}=\alpha,\widehat{\Im f^1}=\beta \end{array}}\frac{|\Ext^1_{\cc_{\Z/2}(\ca)}(M,L)_V|}{|\Hom_{\cc_{\Z/2}(\ca)}(M,L)|} =\sum_{\tiny\begin{array}{cc} [U],U= (\xymatrix{ U^0  \ar@<0.5ex>[r]^{d^0}& U^1 \ar@<0.5ex>[l]^{d^1}  })\\
H^0(U)\simeq A,H^1(U)\simeq B\\
\widehat{\Im d^0}=\alpha,\widehat{\Im d^1}=\beta \end{array}} \frac{|\Ext^1_{\cc_{\Z/2}(\ca)}(K\oplus N,L)_U|}{|\Hom_{\cc_{\Z/2}(\ca)}(K\oplus N,L)|}
\end{equation}
 for any $[A],[B]\in\Iso(\ca)$ and $\alpha,\beta\in K_0(\ca)$.

%Let $M=(\xymatrix{M^0\ar@<0.5ex>[r]^{d^0} & M^1\ar@<0.5ex>[l]^{d^1}})$ and $\Cone(1_M)$ be the complex:
%\[M^0\oplus M^1\xymatrix{\ar@<1ex>[rr]^{\left(\begin{array}{cc}d_0&1\\
%0&-d_1 \end{array} \right)} && \ar@<1ex>[ll]^{\left(\begin{array}{cc}d_1&1\\
%0&-d_0 \end{array}\right) } }M^1\oplus M^0.\]
There is a natural injective morphism $s: M\rightarrow \Cone(1_M)$, which yields a quasi-isomorphism:
$$g_1:=\left(\begin{array}{cc} h_2\\ s \end{array}\right): M\rightarrow N\oplus \Cone(1_M).$$
So there is a short exact sequence
\begin{equation}\label{equation right ideal 2}
0\longrightarrow M\stackrel{g_1}{\longrightarrow}N\oplus \Cone(1_M)\stackrel{g_2}{\longrightarrow} C\longrightarrow 0
\end{equation}
with $C$ acyclic.

By the Snake Lemma, there is a commutative diagram of short exact sequences
\[\xymatrix{ K \ar@{.>}[r]^{s_1\quad\quad} \ar[d]^{h_1} & \Cone(1_M) \ar@{.>}[r]^{\quad s_2}\ar[d] & C\ar[d]^1\\
M\ar[r]^{g_1\quad\quad\,\,}\ar[d]^{h_2} &N\oplus \Cone(1_M) \ar[r]^{\quad\quad\,\,g_2} \ar[d]&C\ar[d]\\
  N\ar[r]^1& N\ar[r]&0
   }\]
%Since the last two arrows and all collomns in the above diagram are short exact sequences, we obtain that there exists $s_1,s_2$ such that
%$$0\longrightarrow K\xrightarrow{s_1} \Cone(1_M)\xrightarrow{s_2}C\longrightarrow0$$
%is short exact.
From the short exact sequence in the first row of the above diagram, we have a short exact sequence
\begin{equation}
\label{equation right ideal 3}
0\longrightarrow K\oplus N \stackrel{t_1}{\longrightarrow} \Cone(1_M)\oplus N\stackrel{t_2}{\longrightarrow} C\longrightarrow0.
\end{equation}

We assume that $C=(\xymatrix{ C^0 \ar@<0.5ex>[r]^{c^0}& C^1 \ar@<0.5ex>[l]^{c^1}  }) $ and $K=(\xymatrix{ K^0 \ar@<0.5ex>[r]^{k^0}& K^1 \ar@<0.5ex>[l]^{k^1}  }) $.
Denote by $$\cw'_{[A],[B],\alpha,\beta}:=\bigsqcup_ {\tiny\begin{array}{cc} [W],W= (\xymatrix{ W^0  \ar@<0.5ex>[r]^{e^0}& V^1 \ar@<0.5ex>[l]^{e^1}  })\\
H^0(W)\simeq A,H^1(W)\simeq B\\
\widehat{\Im e^0}=\alpha+\widehat{\Im c^0},\widehat{\Im e^1}=\beta +\widehat{\Im c^1}\end{array}} \Ext^1_{\cc_{\Z/2}(\ca)}(N\oplus \Cone(1_M),L)_W,$$
and $$\cv'_{[A],[B],\alpha,\beta}:=\bigsqcup_ {\tiny\begin{array}{cc} [V],V= (\xymatrix{ V^0  \ar@<0.5ex>[r]^{f^0}& V^1 \ar@<0.5ex>[l]^{f^1}  })\\
H^0(V)\simeq A,H^1(V)\simeq B\\
\widehat{\Im f^0}=\alpha,\widehat{\Im f^1}=\beta \end{array}} \Ext^1_{\cc_{\Z/2}(\ca)}(M,L)_V.$$
Dual to the proof of Lemma \ref{lemma left ideal}, we obtain that
\begin{align}
\label{WV'}
|\cw'_{[A],[B],\alpha,\beta}|=|\cv'_{[A],[B],\alpha,\beta}| \frac{|\Hom_{\cc_{\Z/2}(\ca)}(N\oplus \Cone(1_M),L)| |\Ext^1_{\cc_{\Z/2}(\ca)}(C,L)|}{|\Hom_{\cc_{\Z/2}(\ca)}(C,L)||\Hom_{\cc_{\Z/2}(\ca)}(M,L)|}.
\end{align}
On the other hand, denote by
$$\cu'_{[A],[B],\alpha,\beta}:= \bigsqcup_ {\tiny\begin{array}{cc} [U],U= (\xymatrix{ U^0  \ar@<0.5ex>[r]^{d^0}& U^1 \ar@<0.5ex>[l]^{d^1}  })\\
H^0(U)\simeq A,H^1(U)\simeq B\\
\widehat{\Im d^0}=\alpha,\widehat{\Im d^1}=\beta \end{array}} \Ext^1_{\cc_{\Z/2}(\ca)}(K\oplus N,L)_U.$$
Then
\begin{align}
\label{WU'}
|\cw'_{[A],[B],\alpha,\beta}|=|\cu'_{[A],[B],\alpha,\beta}| \frac{|\Hom_{\cc_{\Z/2}(\ca)}(N\oplus \Cone(1_M),L)| |\Ext^1_{\cc_{\Z/2}(\ca)}(C,L)|}{|\Hom_{\cc_{\Z/2}(\ca)}(C,L)||\Hom_{\cc_{\Z/2}(\ca)}(K\oplus N,L)|}.
\end{align}
By combining \eqref{WV'} and \eqref{WU'}, we have
$$\frac{|\cv'_{[A],[B],\alpha,\beta}|}{|\Hom_{\cc_{\Z/2}(\ca)}(M,L)|}= \frac{|\cu'_{[A],[B],\alpha,\beta}|}{|\Hom_{\cc_{\Z/2}(\ca)}(K\oplus N,L)|},$$
and then (\ref{equation right ideal 1}) follows.
\end{proof}

\begin{lemma}
\label{proposition lem ideal}
The natural linear map $\Upsilon: \ch(\cc_{\Z/2}(\ca))\rightarrow \cm_{\Z/2}(\ca)$ induces a linear map
$\widetilde{\Upsilon}: \cs\cd\ch(\ca)\rightarrow \cm_{\Z/2}(\ca)$. Moreover $\widetilde{\Upsilon}$ is a morphism of $\T_{\Z/2,ac}(\ca)$-bimodules. %\red{divided}
%n isomorphism $\cs\cd\ch(\ca)\cong\cm_{\Z/2}(\ca)$ as
\end{lemma}

\begin{proof}
First, we prove $\Upsilon(I_{\Z/2})=0$. By definition, it is enough to prove that
\begin{align}
\Upsilon\big([L_1]\diamond ([M]-[K\oplus N]) \diamond [L_2]\big)=0,
\end{align}
for any $\Z/2$-complexes $L_1,L_2$ and any short exact sequence
$0\rightarrow K\rightarrow M\rightarrow N\rightarrow0$ with $K$ acyclic.

For $L_2\in\cc_{\Z/2}(\ca)$ and a short exact sequence $0\rightarrow K\xrightarrow{} M\xrightarrow{}N\rightarrow0$,
Lemma \ref{lemma right ideal} and Remark \ref{rem:concellation} yield that there is an acyclic complex $K_2$ such that $\widehat{\Upsilon}\big(([M]-[K\oplus N] )\diamond [L_2]\big)\diamond [K_2]=0$ in $\ch(\cc_{\Z/2}(\ca))/J_{\Z/2}$.
By Lemma \ref{remark coincide of the structure of two bimodule}, we have $\widehat{\Upsilon}\big(([M]-[K\oplus N] )\diamond [L_2]\diamond [K_2]\big)=0$ in $\ch(\cc_{\Z/2}(\ca))/J_{\Z/2}$. % since the right $\A_{\Z/2,ac}(\ca)$-module structure of $\ch(\cc_{\Z/2}(\ca))/J_{\Z/2}$ is induced by the Hall product.
By definition of $J_{\Z/2}$, there exist short exact sequences $0\rightarrow T_i \rightarrow M_i \rightarrow N_i\rightarrow0$ with $T_i\in\cc_{\Z/2,ac}(\ca)$, and $M_i,N_i\in\cc_{\Z/2}(\ca)$ such that
$$([M]-[K\oplus N] )\diamond [L_2]\diamond [K_2]=\sum_i a_i([M_i]-[T_i\oplus N_i])$$
in $\ch(\cc_{\Z/2}(\ca))$, where $a_i\in \Q$. %, $T_i$ is acyclic and there are short exact sequences $0\longrightarrow T_i \longrightarrow M_i \longrightarrow N_i\longrightarrow0$ for any $i$.
So Lemma \ref{lemma left ideal} implies that
$$\Upsilon\big([L_1]\diamond([M]-[K\oplus N] )\diamond [L_2]\diamond [K_2]\big)=0$$
in $\cm_{\Z/2}(\ca)$, and then
$\Upsilon\big([L_1]\diamond([M]-[K\oplus N] )\diamond [L_2]\big)\diamond [K_2]=0$ by Lemma \ref{remark coincide of the structure of two bimodule}. It follows that $\Upsilon\big([L_1]\diamond([M]-[K\oplus N] )\diamond [L_2]\big)=0$ since $[K_2]$ is invertible in $\T_{\Z/2,ac}(\ca)$. So $\Upsilon(I_{\Z/2})=0$.

Therefore, $\Upsilon$ induces a linear map $\bar{\Upsilon}:\ch(\cc_{\Z/2}(\ca))/I_{\Z/2}\rightarrow \cm_{\Z/2}(\ca)$, which is a morphism of right $\A_{\Z/2,ac}(\ca)$-bimodules by \eqref{eq:bimoduleHall 2}. Note that $\ch(\cc_{\Z/2}(\ca))/I_{\Z/2}\otimes_{\A_{\Z/2,ac}(\ca)} \T_{\Z/2,ac}(\ca) \cong \cs\cd\ch(\ca)$ as right $\T_{\Z/2,ac}(\ca)$-modules.
Then $\bar{\Upsilon}$ induces the morphism of right $\T_{\Z/2,ac}(\ca)$-bimodules % as the following composition:
$$\widetilde{\Upsilon}:  \cs\cd\ch(\ca)\longrightarrow \cm_{\Z/2}(\ca),$$
which is the composition
\[\xymatrix{&\cs\cd\ch(\ca)\cong \ch(\cc_{\Z/2}(\ca))/I_{\Z/2}\otimes_{\A_{\Z/2,ac}(\ca)} \T_{\Z/2,ac}(\ca) \ar[rr]^{\quad\qquad\qquad\qquad\qquad\qquad\bar{\Upsilon}\otimes \T_{\Z/2,ac}(\ca)}&&}\]
\[\xymatrix{\cm_{\Z/2}(\ca)\otimes_{\A_{\Z/2,ac}(\ca)} \T_{\Z/2,ac}(\ca)\ar[r]^{\qquad\qquad\quad\rm mult.}& \cm_{\Z/2}(\ca). }\]
In fact, $\widetilde{\Upsilon}$ maps
$as^{-1}$ to $1\otimes a\otimes s^{-1}$ for any $as^{-1}\in \cs\cd\ch(\ca)$, where $1=[0]$. %Let us check that $\widetilde{\Upsilon}$ is well defined.
%If $as^{-1}=0$, then there exists $t\in S_{\Z/2}$ such that $at=0$ in $\ch(\cc_{\Z/2}(\ca))/I_{\Z/2}$. Then
%$1\otimes a\otimes s^{-1}= 1\otimes at\otimes (st)^{-1}=0$ in $\cm_{\Z/2}(\ca)$, and so $\widetilde{\Upsilon}$ is well defined.
It follows from \eqref{eq:KMMK} and \eqref{definition of bimodule} that $\widetilde{\Upsilon}$ is a morphism of $\T_{\Z/2,ac}(\ca)$-bimodules. %, and so it is an isomorphism of $\T_{\Z/2,ac}(\ca)$-bimodules.
\end{proof}

\begin{proposition}
\label{proposition ideal} Let $\widetilde{\Upsilon}: \cs\cd\ch(\ca)\rightarrow \cm_{\Z/2}(\ca)$ be the linear map induced by
the natural linear map $\Upsilon: \ch(\cc_{\Z/2}(\ca))\rightarrow \cm_{\Z/2}(\ca)$. Then $\widetilde{\Upsilon}$ is an isomorphism as a linear map and also as a morphism of $\T_{\Z/2,ac}(\ca)$-bimodules.
\end{proposition}

\begin{proof}

From Lemma \ref{proposition lem ideal}, $\widetilde{\Upsilon}: \cs\cd\ch(\ca)\rightarrow \cm_{\Z/2}(\ca)$ is a morphism of $\T_{\Z/2,ac}(\ca)$-bimodules.
Consider the morphism of $\T_{\Z/2,ac}(\ca)$-bimodules $\widetilde{\Psi}: \cm_{\Z/2}(\ca) \rightarrow \cs\cd\ch(\ca)$  defined in \eqref{eq:morPsi}. Clearly, $\widetilde{\Psi}\circ\widetilde{\Upsilon}$ and $\widetilde{\Upsilon}\circ\widetilde{\Psi}$
are identity maps, and so $ \cs\cd\ch(\ca)\cong\cm_{\Z/2}(\ca)$
as vector spaces and also as $\T_{\Z/2,ac}(\ca)$-bimodules.
\end{proof}
%\begin{remark}

%\end{remark}

The following is our main result in this section. %Combining Proposition \ref{theorem basis of modified hall module} and Proposition \ref{proposition ideal}, we obtain the following result.

\begin{theorem} We have the following.
\label{theorem basis of modified hall algebra}
\begin{itemize}
\item[(1).]
$\cs\cd\ch(\ca)$ has a basis given by
$$\{[C_A\oplus C_B^*]\diamond [K_\alpha]\diamond[K_\beta^*]\mid [A],[B]\in\Iso(\ca),\alpha,\beta\in K_0(\ca)\}$$
or by $$\{[K_\alpha]\diamond[K_\beta^*]\diamond [C_A\oplus C_B^*] \mid [A],[B]\in\Iso(\ca),\alpha,\beta\in K_0(\ca)\}.$$
\item[(2).] Let $M=(\xymatrix{ M^0 \ar@<0.5ex>[r]^{f^0}& M^1 \ar@<0.5ex>[l]^{f^1}  })\in \cc_{\Z/2}(\ca)$. Then we have
\begin{align*}
[M]=&\langle \widehat{\Im f^0} , \widehat{\Im f^1}\rangle \langle\widehat{\Im f^0}, \widehat{H^0(M)} \rangle \langle\widehat{ \Im f^1}, \widehat{H^1(M) }\rangle[K_{\widehat{\Im f^0}}]\diamond [K_{\widehat{\Im f^1}}^*]\diamond [C_{H^0(M)}^*\oplus C_{H^1(M)}]
\end{align*}
in  $\cs\cd\ch(\ca)$.
\end{itemize}
\end{theorem}

\begin{proof}
(1) follows from Proposition \ref{theorem basis of modified hall module}, Proposition \ref{proposition ideal} and (\ref{eq:KMMK}).

Since $K_A$ and $K_A^*$  only depend on $\widehat{A}\in K_0(\ca)$, (2) follows from Proposition \ref{lemma canonical form of complex} and Proposition \ref{proposition ideal}.
\end{proof}
%\begin{proof}
%It follows from Proposition \ref{theorem basis of modified hall module} and Proposition \ref{proposition ideal}.
%\end{proof}

%The following corollary describes the relation between $I_{\Z/2}$ and $J_{\Z/2}$ clearly.
%\begin{corollary}
%\label{corollary ideal to vector space}
%For any $\sum_{i=1}^na_i[M_i] \in I_{\Z/2}$, $a_i\in\Q$ for $1\leq i\leq n$, there exists $V\in \cc_{\Z/2,ac}(\ca)$ such that
%$$\sum_{i=1}^na_i\frac{1}{\langle [M_i],[V]\rangle}[M_i\oplus V]\in J_{\Z/2}.$$
%\end{corollary}

%\begin{proof}
%From Proposition \ref{proposition ideal}, we obtain that the natural projection
%$$\Upsilon: \ch(\cc_{\Z/2}(\ca))\longrightarrow \cm_{\Z/2}(\ca)$$
%maps $I_{\Z/2}$ to be zero in $\cm_{\Z/2}(\ca)$. By the definition of $\cm_{\Z/2}(\ca)$, we obtain that
%for any $\sum_{i=1}^na_i[M_i] \in I_{\Z/2}$, there exists $[V]\in S_{\Z/2}$ such that
%$$\Upsilon(\sum_{i=1}^na_i[M_i])\diamond [V]=0 \mbox{ in } \ch(\cc_{\Z/2}(\ca))/J_{\Z/2},$$
%where we also use $\Upsilon$ to denote the natural projection $\Upsilon:  \ch(\cc_{\Z/2}(\ca))\longrightarrow \ch(\cc_{\Z/2}(\ca))/J_{\Z/2}$.
%Note that
%$$\Upsilon(\sum_{i=1}^na_i[M_i])\diamond [V]=\Upsilon( \sum_{i=1}^na_i \frac{1}{\langle [M_i],[V]\rangle}[M_i\oplus V])$$
%by the definition of the module structure.
%So we obtain that
%$$\sum_{i=1}^na_i \frac{1}{\langle [M_i],[V]\rangle}[M_i\oplus V]=0$$ in $\ch(\cc_{\Z/2}(\ca))/J_{\Z/2}$, and then $\sum_{i=1}^na_i \frac{1}{\langle [M_i],[V]\rangle}[M_i\oplus %V]\in J_{\Z/2}$.
%\end{proof}

\begin{corollary}
\label{corollary basis of semi-derived hall algebra}
 $\cs\cd\ch(\ca)$ is
a free right (respectively left) module over the quantum torus $\T_{\Z/2,ac}(\ca)$, with a basis given by $\{[C_A\oplus C_B^*] \mid [A],[B]\in\Iso(\ca)\}$.
\end{corollary}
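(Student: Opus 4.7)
The plan is to derive this freeness result directly from the basis description of $\cm\ch_{\Z/2}(\ca)$ given in Corollary~\ref{theorem basis of modified hall algebra} by pairing it with an explicit $\Q$-basis of the quantum torus $\T_{\Z/2,ac}(\ca)$. The right action of the torus on $\cm\ch_{\Z/2}(\ca)$ is that induced by the Hall product, as recorded in Lemma~\ref{corollary hall multiplicatoin of acyclic complexes} and Lemma~\ref{remark coincide of the structure of two bimodule}.

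The first step is to exhibit $\{[K_\alpha]\diamond[K_\beta^*]\mid \alpha,\beta\in K_0(\ca)\}$ as a $\Q$-basis of $\T_{\Z/2,ac}(\ca)$. By construction $\T_{\Z/2,ac}(\ca)$ is the twisted $\Q$-group algebra of $K_0(\cc_{\Z/2,ac}(\ca))$, so I must compute this Grothendieck group. The argument in the proof of Proposition~\ref{proposition extension 2 zero} shows that every acyclic $\Z/2$-graded complex lies in the extension closure of complexes of the form $K_X$ and $K_X^*$ with $X\in\ca$, so the classes $[K_X]$ and $[K_X^*]$ generate $K_0(\cc_{\Z/2,ac}(\ca))$. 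Short exact sequences $0\to K_{X'}\to K_X\to K_{X''}\to 0$ coming from short exact sequences $0\to X'\to X\to X''\to 0$ in $\ca$ (and their $*$-twists) force $[K_X]$ to depend only on $\widehat{X}\in K_0(\ca)$, and similarly for $[K_X^*]$. This yields a surjection $K_0(\ca)\oplus K_0(\ca)\twoheadrightarrow K_0(\cc_{\Z/2,ac}(\ca))$, which is in fact injective because the $G(0,0)$-grading of Lemma~\ref{lemma grade of Ringel-Hall algebra} assigns grade $(\widehat{X},0)$ to $[K_X]$ and grade $(0,\widehat{X})$ to $[K_X^*]$, cleanly separating the two summands. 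Hence $K_0(\cc_{\Z/2,ac}(\ca))\cong K_0(\ca)\oplus K_0(\ca)$, and by the definition of $K_\alpha$ and $K_\beta^*$ the family $\{[K_\alpha]\diamond[K_\beta^*]\}$ is a $\Q$-basis of $\T_{\Z/2,ac}(\ca)$.

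The second step is a direct repackaging. Corollary~\ref{theorem basis of modified hall algebra} gives that
\[
\{[C_A\oplus C_B^*]\diamond[K_\alpha]\diamond[K_\beta^*]\mid [A],[B]\in\Iso(\ca),\;\alpha,\beta\in K_0(\ca)\}
\]
is a $\Q$-basis of $\cm\ch_{\Z/2}(\ca)$. Grouping this basis by the pair $([A],[B])$ and using the first step gives the decomposition
\[
\cm\ch_{\Z/2}(\ca)=\bigoplus_{[A],[B]\in\Iso(\ca)}[C_A\oplus C_B^*]\diamond\T_{\Z/2,ac}(\ca),
\]
where the right factor acts by the Hall product. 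This is precisely the assertion that $\cm\ch_{\Z/2}(\ca)$ is a free $\T_{\Z/2,ac}(\ca)$-module with basis $\{[C_A\oplus C_B^*]\mid[A],[B]\in\Iso(\ca)\}$.

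The only mildly delicate point is verifying that the map $K_0(\ca)\oplus K_0(\ca)\twoheadrightarrow K_0(\cc_{\Z/2,ac}(\ca))$ is injective; this is where the $G(0,0)$-grading of Lemma~\ref{lemma grade of Ringel-Hall algebra} does all the work. Everything else is bookkeeping using results already established.
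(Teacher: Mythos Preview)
Your proof is correct and follows the same approach as the paper: both deduce the direct sum decomposition $\cm\ch_{\Z/2}(\ca)=\bigoplus_{[A],[B]}[C_A\oplus C_B^*]\diamond\T_{\Z/2,ac}(\ca)$ immediately from the $\Q$-basis of Corollary~\ref{theorem basis of modified hall algebra}. You are simply more explicit than the paper about the intermediate step of identifying $K_0(\cc_{\Z/2,ac}(\ca))\cong K_0(\ca)\oplus K_0(\ca)$ (via Proposition~\ref{proposition equivalences of some exact sequences} and the $G(0,0)$-grading), which the paper's one-line proof leaves implicit.
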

\begin{proof}
From Theorem \ref{theorem basis of modified hall algebra}, we obtain that
$$\cs\cd\ch(\ca)=\bigoplus_{[A],[B]\in \Iso(\ca)} [C_A\oplus C_B^*]\diamond \T_{\Z/2,ac}(\ca),$$
and then it is
a free right module over the quantum torus $\T_{\Z/2,ac}(\ca)$.
\end{proof}

We denote by $\cd_{\Z/2}(\ca)$ the $\Z/2$-graded derived category, i.e., the localization of the homotopy category $\ck_{\Z/2}(\ca)$ with respect to quasi-isomorphisms.
For any $M,N\in\cc_{\Z/2}(\ca)$, we have $M\cong N$ in $\cd_{\Z/2}(\ca)$ if and only if $H^\bullet(M)\cong H^\bullet (N)$; see e.g. \cite{PX1} or the proof of Proposition \ref{lemma canonical form of complex}.
So Corollary \ref{corollary basis of semi-derived hall algebra} can be restated as follows.

\begin{corollary}
\label{corollary basis of semi-derived hall algebra2}
 $\cs\cd\ch(\ca)$ is
a free right (respectively left) module over the quantum torus $\T_{\Z/2,ac}(\ca)$, with a basis given by $\Iso (\cd_{\Z/2}(\ca))$.
\end{corollary}

\begin{remark}
By Corollary \ref{corollary basis of semi-derived hall algebra2}, the underlying vector space of $\cs\cd\ch(\ca)$ is the same as the one used by Gorsky \cite{Gor13} to define the $\Z/2$-graded semi-derived Hall algebra of $\ca$ in case $\ca$ has enough projective objects.
\end{remark}

%%%%%%%%%%%%%%

%In the following, we give a basis of $\cs\cd\ch_{tw}(\ca)$.

\subsection{A triangular decomposition}

In this subsection, we give  another basis for $\cs\cd\ch(\ca)$, which gives a triangular decomposition of $\cs\cd\ch(\ca)$.

The following result might be known for experts; we include a proof here as we cannot find a suitable reference.

\begin{lemma}
\label{lemma:nonzero object}
For any nonzero object $A\in\ca$, the following holds.
\begin{itemize}
\item[(i).] There exists no monomorphism $t:A\hookrightarrow A$ with $\coker t\neq0$;
\item[(ii).] There exists no epimorphism $s:A\twoheadrightarrow A$ with $\ker s\neq0$.
\end{itemize}
\end{lemma}
\begin{proof}
We only need to prove (i) since (ii) is dual.

For any monomorphism $t:A\rightarrow A$, we have $t^i\in \Hom_\ca(A,A)$ for any $i\geq0$. Here $t^0=\id$. As $\dim\Hom_\ca(A,A)<\infty$, there exists $0\leq m<n$ such that $\sum_{i=m}^n a_i t^i=0$ with $a_m,a_n\neq0$. Then
$t^m\sum_{i=m}^n a_it^{i-m}=0$, which shows that $\sum_{i=m}^n a_it^{i-m}=0$ since $t^m$ is injective. It follows that $a_m\id=a_{m+1}t+\cdots +a_{n}t^{n-m}=t(a_{m+1}\id+\cdots +a_{n}t^{n-m-1})$,
and so $t$ is surjective. Therefore, $t$ is an isomorphism.
\end{proof}

\begin{theorem}
\label{lemma basis of semi-derived hall algebra of A}
$\cs\cd\ch(\ca)$ has a basis given by
\begin{equation}
\label{eqn:basis}
\{[C_A]\diamond[C_B^*]\diamond K_\alpha\diamond K_\beta^*\mid  [A],[B]\in\Iso(\ca),\alpha,\beta\in K_0(\ca)\}.
\end{equation}
\end{theorem}
\begin{proof}
From Theorem \ref{theorem basis of modified hall algebra}, $\{[C_A\oplus C_B^*]\diamond K_\alpha\diamond K_\beta^*\mid [A],[B]\in\Iso(\ca),\alpha,\beta\in K_0(\ca)\}$ is a basis of $\cs\cd\ch(\ca)$ .

For any  $[A],[B]\in\Iso(\ca)$ and $\alpha,\beta\in K_0(\ca)$,
we obtain that
\begin{eqnarray*}
[C_A]\diamond[C_B^*]\diamond K_\alpha\diamond K_\beta^*&=&\sum_{[X]\in\Iso(\cc_{Z/2}(\ca))} |\Ext^1(C_A,C_B^*)_X|[X]\diamond K_\alpha\diamond K_\beta^*,
\end{eqnarray*}
and $[X]$ is of the form $[\xymatrix{B\ar@<0.5ex>[r]^0& A\ar@<0.5ex>[l]^{f}}]$  if $|\Ext^1_{\cc_{\Z/2}(\ca)}(C_A,C_B^*)_X|\neq0$. In this case,
\begin{align*}
[X]=\langle \coker f,\Im f \rangle[C_{\ker f}\oplus C_{\coker{f}}^*]\diamond K_{\Im f}^*,
\end{align*}
and then
\begin{eqnarray*}
&&[C_A]*[C_B^*]\diamond K_\alpha\diamond K_\beta^*\\
&=&[C_A\oplus C_B^*]\diamond K_\alpha\diamond K_\beta^*+
\sum_{[\xymatrix{B\ar@<0.5ex>[r]^0& A\ar@<0.5ex>[l]^{f}}],f\neq0 }
 a_f [C_{\ker f}\oplus C_{\coker{f}}^*]\diamond K_{\Im f}^*\diamond K_\alpha\diamond K_\beta^*,
\end{eqnarray*}
for some $a_f\in\Q$. %Lemma \ref{lemma:nonzero object} shows that $C_{\ker f}\oplus C_{\coker{f}}^*\ncong C_A\oplus C_B^*$ if $f\neq0$.

For each $\xymatrix{B\ar@<0.5ex>[r]^0& A\ar@<0.5ex>[l]^{f}}$, there exists a short exact sequence
$$0\longrightarrow C_B^*\longrightarrow (\xymatrix{B\ar@<0.5ex>[r]^0& A\ar@<0.5ex>[l]^{f}})\longrightarrow C_A\longrightarrow0,$$
which yields a triangle
\begin{align}
\label{eq:triangle}
C_B^*\stackrel{\omega}{\longrightarrow} C_{\ker f}\oplus C_{\coker{f}}^*\longrightarrow C_A\longrightarrow C_B
\end{align}
in $\cd_{\Z/2}(\ca)$ as $(\xymatrix{B\ar@<0.5ex>[r]^0& A\ar@<0.5ex>[l]^{f}})\cong C_{\ker f}\oplus C_{\coker{f}}^*$ in $\cd_{\Z/2}(\ca)$.

Claim ($\star$): we have
\begin{align}
\label{claim}
|\End_{\cd_{\Z/2}(\ca)}(C_{\ker f}\oplus C_{\coker{f}}^*) |\leq| \End_{\cd_{\Z/2}(\ca)}(C_A\oplus C_B^*)|,
\end{align}
and the equality holds if and only if $f=0$. The analogous result for abelian categories or exact categories is already known (see \cite[Lemma 2.1]{GP} and \cite[Proposition 4.8]{BG}). For the sake of self-containment, we give a proof here.

Let us prove the Claim. %The proof is similar to \cite[Lemma 2.1]{GP} and \cite[Proposition 4.8]{BG}, which concern abelian categories or exact categories. %, see also \cite[Proposition 4.8]{BG} ,
 By applying $\Hom_{\cd_{\Z/2}(\ca)}(C_{\ker f}\oplus C_{\coker f}^*,- )$ to \eqref{eq:triangle}, we have a long exact sequence
\begin{align*}
\cdots \longrightarrow&\Hom_{\cd_{\Z/2}(\ca)}(C_{\ker f}\oplus C_{\coker f}^*, C_B^*)\longrightarrow \End_{\cd_{\Z/2}(\ca)}(C_{\ker f}\oplus C_{\coker f}^*)
\\
\longrightarrow &\Hom_{\cd_{\Z/2}(\ca)}(C_{\ker f}\oplus C_{\coker f}^*,C_A )\longrightarrow\cdots.
\end{align*}
Then
\begin{align}
\label{eq:noneq1}
|\End_{\cd_{\Z/2}(\ca)}(C_{\ker f}\oplus C_{\coker f}^*)|\leq |\Hom_{\cd_{\Z/2}(\ca)}(C_{\ker f}\oplus C_{\coker f}^*, C_B^*\oplus C_A )|.
\end{align}
Dually, by applying $\Hom_{\cd_{\Z/2}(\ca)}(-, C_B^*\oplus C_A)$ to \eqref{eq:triangle}, one can prove that
\begin{align}
\label{eq:noneq2}
|\Hom_{\cd_{\Z/2}(\ca)}(C_{\ker f}\oplus C_{\coker f}^*, C_B^*\oplus C_A )|\leq | \End_{\cd_{\Z/2}(\ca)}(C_A\oplus C_B^*)|.
\end{align}
Then \eqref{claim} follows from \eqref{eq:noneq1}--\eqref{eq:noneq2}.
If the equality holds, then any morphism $C_B^*\rightarrow C_B^*\oplus C_A$ factors through $\omega$, and so \eqref{eq:triangle} is split. Then $C_A\oplus C_B^*\cong C_{\ker f}\oplus C_{\coker f}^*$ in $\cd_{\Z/2}(\ca)$, which implies that $A\cong \ker f$ and $B\cong\coker f$ by comparing their homology groups. So
$f=0$ by Lemma \ref{lemma:nonzero object}. Conversely, if $f=0$, it is clear that the equality holds. The Claim is proved.

For any $m\in\Z$, denote by $\cs\cd\ch(\ca)_m$  the subspace of $\cs\cd\ch(\ca)$ spanned by $[M]\diamond K_\alpha\diamond K_\beta^*$ for $\alpha,\beta\in K_0(\ca)$ and $[M]\in \Iso(\cc_{\Z/2}(\ca))$ with $|\End_{\cd_{\Z/2}(\ca)}(M)|= m$.
Clearly, this makes $\cs\cd\ch(\ca)$ to be a $\Z$-graded vector space
$$\cs\cd\ch(\ca)= \bigoplus_{m\in\Z}\cs\cd\ch(\ca)_m.$$
Then it is not hard to prove that the set (\ref{eqn:basis}) is linearly independent.

%Suppose that
%$$\sum_{\begin{array}{cc}\alpha,\beta\in K_0(\ca)\\ \,[A],[B]\in\Iso(\ca)\end{array}} a_{\alpha,\beta,[A],[B]} [C_A]*[C_B^*]* K_\alpha* K_\beta^*=0$$
%in $\cs\cd\ch(\ca)$.
%So we have
%\begin{eqnarray*}
%&&\sum_{\begin{array}{cc}\alpha,\beta\in K_0(\ca)\\ \,[A],[B]\in\Iso(\ca)\end{array}} a_{\alpha,\beta,[A],[B]} [C_A]*[C_B^*]* K_\alpha* K_\beta^*\\
%&=&\sum_{\begin{array}{cc}\alpha,\beta\in K_0(\ca)\\ \,[A],[B]\in\Iso(\ca)\end{array}} a_{\alpha,\beta,[A],[B]} ([C_A\oplus C_B^*]* K_\alpha* K_\beta^*\\
%&&+\sum_{\begin{array}{cc}[\xymatrix{B\ar@<0.5ex>[r]^0& A\ar@<0.5ex>[l]^{f}}],\\ f\neq0 \end{array}} a_f [C_{\ker f}\oplus C_{\coker{f}}^*]*K_{\Im(f)}^** K_\alpha* K_\beta^*),
%\end{eqnarray*}
%Let $m=\max\{| \End_{\cd_{\Z/2}(\ca)}(C_A\oplus C_B^*)|\mid a_{\alpha,\beta,[A],[B]}\neq0 \}$.
%Because $|\End_{\cd_{\Z/2}(\ca)}(C_{\ker f}\oplus C_{\coker{f}}^*) |<| \End_{\cd_{\Z/2}(\ca)}(C_A\oplus C_B^*)|$ if $f\neq0$, and
%$\cs\cd\ch(\ca)$ is a $\Z$-graded vector space, we can assume that $a_{\alpha,\beta,[A],[B]}\neq0$ only if
%$| \End_{\cd_{\Z/2}(\ca)}(C_A\oplus C_B^*)|=m$.
%As $\cs\cd\ch(\ca)$ is a $\Z$-graded vector space and $|\End_{\cd_{\Z/2}(\ca)}(C_{\ker f}\oplus C_{\coker{f}}^*) |<m$ if $f\neq0$ in this case, we obtain that
%$$\sum_{\begin{array}{cc}\alpha,\beta\in K_0(\ca)\\ \,[A],[B]\in\Iso(\ca)\end{array}} a_{\alpha,\beta,[A],[B]} [C_A\oplus C_B^*]* K_\alpha* K_\beta^*=0,$$
%which implies that $a_{\alpha,\beta,[A],[B]}=0$ for any $[A],[B]\in\Iso(\ca)$ and $\alpha,\beta\in K_0(\ca)$ by Corollary \ref{corollary basis of semi-derived hall algebra}.

On the other hand, $$[C_A\oplus C_B^*]=[C_A]\diamond[C_B^*]-\sum_{[\xymatrix{B\ar@<0.5ex>[r]^0& A\ar@<0.5ex>[l]^{f}}], f\neq0 } a_f [C_{\ker f}\oplus C_{\coker{f}}^*]\diamond K_{\Im f}^*.$$
From Claim ($\star$), we obtain that $$|\End_{\cd_{\Z/2}(\ca)}(C_{\ker f}\oplus C_{\coker{f}}^*)| \leq| \End_{\cd_{\Z/2}(\ca)}(C_A\oplus C_B^*)|,$$
and the equality holds if and only if $f=0$.
As $|\End_{\cd_{\Z/2}(\ca)}(C_{\ker f}\oplus C_{\coker{f}}^*)|$ is finite,  one can obtain by induction that
$[C_{\ker f}\oplus C_{\coker{f}}^*] \diamond K_{\Im f}^*\diamond K_\alpha\diamond K_\beta^*$ is in the subspace spanned by  (\ref{eqn:basis}) for any $f\neq0$, and then so is $[C_A\oplus C_B^*]$.
Therefore, the set (\ref{eqn:basis}) is a basis of $\cs\cd\ch(\ca)$.
\end{proof}

\begin{remark}
Theorem \ref{lemma basis of semi-derived hall algebra of A} gives a triangular decomposition of $\cs\cd\ch(\ca)$. Moreover, this result does not require any additional condition on the hereditary abelian category $\ca$. In \cite[Lemma 4.7]{Br}, Bridgeland proved a similar result for Bridgeland's Hall algebras. However, the proof there requires the category $\ca$ to satisfy that a nonzero object defines a nonzero class in the Grothendieck group of $\ca$.
%\begin{remark}
%The above lemma can also be proved using the method to obtain the universal PBW-basis for Ringel-Hall algebras in \cite[Theorem 3.1]{GP}.
\end{remark}

%%%%%%%%%%%%%%%
\section{Drinfeld double}
\label{sec:Drinfeld}

As in Section \ref{sec:semi}, we shall assume in this section that $\ca$ is a hereditary abelian $\K$-linear category which is essentially small with finite-dimensional homomorphism and extension spaces.
%As usual we use $\ch(\ca)$ to denote the Ringel-Hall algebra of $\ca$ over $\Q$ in Section 3. However, in this section, we assume that $\ch(\ca)$ is an associative algebra over $\C$, i.e., $\ch(\ca)\otimes _\Q \C$.
We assume that Ringel-Hall algebras are defined over $\C$.

%%%%%%%%%%%%%%%%%%%%
\subsection{Extended Ringel-Hall algebras}

Denote by $(\cdot,\cdot)$ the {\em symmetric Euler form}, i.e.,
\begin{align}
(\alpha,\beta)=\langle \alpha,\beta\rangle\langle \beta,\alpha\rangle,\,\,\forall \alpha,\beta\in K_0(\ca).
\end{align}

\begin{definition}[\cite{R2,Gr}]
(1) The twisted Ringel-Hall algebra $\ch_{tw}(\ca)$ is the same vector space as $\ch(\ca)$ equipped with the twisted multiplication
$$[A]* [B]=\sqrt{\langle \widehat{A},\widehat{B}\rangle}[A]\diamond [B]$$
for $[A],[B]\in\Iso(\ca)$.

(2) The twisted extended Ringel-Hall algebra $\ch^e_{tw}(\ca)$ is defined as an extension of $\ch_{tw}(\ca)$ by adjoining symbols $k_\alpha$ for classes $\alpha\in K_0(\ca)$, and imposing relations
$$k_\alpha* k_\beta=k_{\alpha+\beta},\quad k_\alpha* [B]=\sqrt{( \alpha,\widehat{B})}  [B]* k_\alpha$$
for $\alpha,\beta\in K_0(\ca)$ and $[B]\in\Iso(\ca)$. Note that $\ch^e_{tw}(\ca)$ has a basis consisting of the elements $k_\alpha* [B]$ for $\alpha\in K_0(\ca)$ and $[B]\in\Iso(\ca)$.
\end{definition}
In fact, there is an isomorphism of vector spaces given by the multiplication
$$\C[K_0(\ca)]\otimes\ch_{tw}(\ca)\longrightarrow \ch^e_{tw}(\ca).$$

We denote by $\ch(\ca)\widehat{\otimes} \ch(\ca)$ the space of formal linear combinations
$$\sum_{[A],[B]\in \Iso(\ca)} c_{A,B} [A]\otimes[B],$$
where $\widehat{\otimes}$ is the \emph{completed tensor product}. Similarly, we can define $\ch^e_{tw}(\ca)\widehat{\otimes} \ch^e_{tw}(\ca)$.

The coproduct and counit for $\ch^e_{tw}(\ca)$ are given by Green \cite{Gr} (see also \cite{X,Cr}):
\begin{eqnarray*}
&&\Delta:\ch^e_{tw}(\ca)\longrightarrow \ch^e_{tw}(\ca)\widehat{\otimes} \ch^e_{tw}(\ca),\qquad \epsilon: \ch^e_{tw}(\ca)\longrightarrow \C,\\
\end{eqnarray*}
\begin{align}
\Delta([A]*k_\alpha)=&\sum_{[B],[C]}\sqrt{\langle \widehat{B},\widehat{C}\rangle}\frac{|\Ext^1_\ca(B,C)_A|}{|\Hom_\ca(B,C)|}\frac{|\aut(A)|}{|\aut(B)||\aut(C)|}([B]*k_{\widehat{C}+\alpha})\otimes[C]*k_\alpha,\label{equation coproduct 1}\\
\Delta(k_\alpha)=&k_\alpha\otimes k_\alpha,\label{equation coproduct 2}\\
\epsilon([A]k_\alpha)=&\delta_{[A],0},
\end{align}
for $[A]\in\Iso(\ca)$ and $\alpha\in K_0(\ca)$. Then $(\ch_{tw}^e(\ca),*,[0],\Delta,\epsilon)$ is a \emph{topological bialgebra} defined over $\C$ (see \cite{Gr,X}).
Here topological means that everything should be considered in the completed space.
\begin{remark}
If $\ca$ is in particular a finite length hereditary category (for instance, the category of nilpotent finite-dimensional representations of a finite quiver), then $(\ch_{tw}^e(\ca),*,[0],\Delta,\epsilon)$ is a genuine bialgebra over $\C$. Moreover, J. Xiao \cite{X} defined the antipode that endowed $\ch_{tw}^e(\ca)$ with a natural Hopf algebra structure.
\end{remark}
\vspace{0.2cm}

\subsection{Drinfeld double}
The bilinear pairing $\varphi:\ch^e_{tw}(\ca)\times \ch^e_{tw}(\ca)\rightarrow\C$ given by
\begin{equation}
\varphi([M]*k_\alpha,[N]*k_\beta)=\sqrt{(\alpha,\beta)}\delta_{[M],[N]}|\aut(M)|
\end{equation}
is a \emph{Hopf pairing} on $\ch_{tw}^e(\ca)$ (see \cite{Gr,R5}), that is, for any $x,y,z\in\ch_{tw}^e(\ca)$, one has
$$\varphi(x*y,z)={\varphi}(x\otimes y,\Delta z).$$
Here we use the usual pairing on the product space:
$\varphi(a\otimes a',b\otimes b')=\varphi(a,b)\varphi(a',b')$. Moreover, this pairing is non-degenerate on $\ch_{tw}(\ca)$ and symmetric.

\vspace{0.2cm}

For the hereditary abelian category $\ca$, there is a unique algebra structure on $\ch^e_{tw}(\ca){\otimes}\ch^e_{tw}(\ca)$ satifying the following conditions, which is called the \emph{Drinfeld double} of $\ch^e_{tw}(\ca)$; see \cite{Dr,Jo,X}, and see also \cite{Gr,Cr,Sch,BS1,BS,DJX,Y}.

\begin{itemize}
\item[(D1)] The maps $$\ch^e_{tw}(\ca)\longrightarrow\ch^e_{tw}(\ca){\otimes}\ch^e_{tw}(\ca),\quad a\mapsto a\otimes1$$
and
$$\ch^e_{tw}(\ca)\longrightarrow\ch^e_{tw}(\ca){\otimes}\ch^e_{tw}(\ca),\quad a\mapsto 1\otimes a$$
are injective homomorphisms of $\C$-algebras. That is, for any $a,a',b,b'\in \ch^e_{tw}(\ca)$,
\begin{eqnarray*}
&&(a\otimes1)(a'\otimes1)=aa'\otimes1,\\
&&(1\otimes b)(1\otimes b')=1\otimes bb'.
\end{eqnarray*}
\item[(D2)] For any $a,b\in \ch^e_{tw}(\ca)$, one has
$$(a\otimes1)(1\otimes b)=(a\otimes b).$$
\item[(D3)] For any $a,b\in \ch^e_{tw}(\ca)$, one has
\begin{equation}\label{equation drinfeld double 2}
\sum \varphi(a_{(2)},b_{(1)})a_{(1)}\otimes b_{(2)}=\sum \varphi(a_{(1)},b_{(2)}) (1\otimes b_{(1)})(a_{(2)}\otimes1).
\end{equation}
\end{itemize}

It is worth noting that if $\ch^e_{tw}(\ca)$ is a topological bialgebra, then one should replace the tensor product $\otimes$
in the statement by the completed one
$\widehat{\otimes}$.

In particular, if $\ca$ is a finite length hereditary abelian category, then the Drinfeld double of $\ch^e_{tw}(\ca)$ is again a Hopf algebra.

\subsection{Twisted semi-derived Ringel-Hall algebras}

%In this subsection, we assume that Ringel-Hall algebras are defined over $\C$.

Similar to \cite{Br,Gor13}, we construct a twisted version of the algebra $\cs\cd\ch(\ca)$. Define \emph{the componentwise Euler form} on $\Iso(\cc_{\Z/2}(\ca))$ by
$$\langle\cdot,\cdot\rangle_{cw}:\Iso(\cc_{\Z/2}(\ca))\times \Iso(\cc_{\Z/2}(\ca))\longrightarrow\C^\times,\quad \langle [M],[N]\rangle_{cw}=\sqrt{\langle \widehat{M^0},\widehat{N^0}\rangle \langle \widehat{M^1},\widehat{N^1}\rangle},$$
where $M=(\xymatrix{M^0\ar@<0.5ex>[r]& M^1\ar@<0.5ex>[l]})$ and $N=(\xymatrix{N^0\ar@<0.5ex>[r]& N^1\ar@<0.5ex>[l]})$.
This form descends to a bilinear form
$$\langle\cdot,\cdot\rangle_{cw}:K_0(\cc_{\Z/2}(\ca))\times K_0(\cc_{\Z/2}(\ca))\longrightarrow\C^\times.$$

The multiplication in the \emph{twisted semi-derived Ringel-Hall algebra} $\cs\cd\ch_{tw}(\ca)$ over $\C$ is given by
\begin{align}
\label{eq:twistedprod}
[M_1]*[M_2]:=\langle [M_1],[M_2]\rangle_{cw}[M_1]\diamond[M_2],\quad \forall M_1,M_2\in\cc_{\Z/2}(\ca).
\end{align}

On the Grothendieck group of acyclic complexes, %from Corollary \ref{lemma coincide of Euler forms},
the bilinear form $\langle\cdot,\cdot\rangle_{cw}$ coincides with the usual Euler form:
\begin{align}
\label{eq:comparison Eulers}
\langle\alpha,\beta\rangle_{cw}=\langle\alpha,\beta\rangle,\,\,\forall \alpha,\beta\in K_0(\cc_{\Z/2,ac}(\ca)).
\end{align}
In fact, by Corollary \ref{lemma coincide of Euler forms}, \eqref{eq:comparison Eulers} holds for $\alpha,\beta\in\{[K_X],[K_Y^*] \mid X,Y\in\ca\}$, and the proof of Proposition \ref{lemma canonical form of complex}
implies that these classes generate the whole Grothendieck group $K_0(\cc_{\Z/2,ac}(\ca))$.

Clearly, $K_\alpha*K_\beta=K_{\alpha+\beta}$ and $K_{\alpha}^**K_\beta^*=K_{\alpha+\beta}^*$ for any $\alpha,\beta\in K_0(\ca)$.

\begin{lemma}
\label{corollary commutation equations}
In $\cs\cd\ch_{tw}(\ca)$, for any $\alpha,\beta\in K_0(\ca)$ and $M\in \cc_{\Z/2}(\ca)$, we have
\begin{align*}
\sqrt{(\alpha,\widehat{M})}[C_M]* &K_\alpha=K_\alpha *[C_M], \qquad \qquad[C_M^*]*K_\alpha=\sqrt{(\alpha,\widehat{M})} K_\alpha *[C_M^*],
\\
[C_M]*K_\alpha^*= &\sqrt{(\widehat{M},\alpha)}K_\alpha^**[C_M] ,\qquad\quad \sqrt{(\widehat{M},\alpha)}[C_M^*]*K_\alpha^*=K_\alpha^**[C_M^*],
\\
&[K_\alpha,K_\beta]=[K_\alpha,K_\beta^*]=[K_\alpha^*,K_\beta^*]=0.
\end{align*}
\end{lemma}

\begin{proof}
From Lemma \ref{corollary hall multiplicatoin of acyclic complexes} and the definitions, we obtain that
$$
[K ]* [M]=\frac{\langle [K] ,[M]\rangle_{cw}\langle [M],[K] \rangle}{\langle [M],[K ]\rangle_{cw}\langle [K] ,[M]\rangle}[M]*[K],
$$
for any $K\in\cc_{\Z/2,ac}(\ca)$, and $M\in\cc_{\Z/2}(\ca)$. %One can check these formulas directly.
Now all the formulas in the statement can be checked directly by applying this identity and Proposition \ref{lema euler form}.
\end{proof}

\begin{lemma}
\label{lemma 2 complexes to stalk complexes}
Let $M=\xymatrix{M^0\ar@<0.5ex>[r]^{f^0} & M^1 \ar@<0.5ex>[l]^{0} }$ be in $\cc_{\Z/2}(\ca)$. Then we have
\begin{align}
\label{eq:Mdec}
[M]=\frac{\sqrt{\langle \widehat{\coker f^0},\widehat{\Im f^0}\rangle}}{ \sqrt{\langle \widehat{\ker f^0},\widehat{\Im f^0}\rangle} } [C_{\ker f^0}^*\oplus C_{\coker f^0}]*[K_{\Im f^0}],
\\
\label{eq:Mdec2}
[M^*]=\frac{\sqrt{\langle \widehat{\coker f^0},\widehat{\Im f^0}\rangle}}{ \sqrt{\langle \widehat{\ker f^0},\widehat{\Im f^0}\rangle} } [C_{\ker f^0}\oplus C^*_{\coker f^0}]*[K^*_{\Im f^0}]
\end{align}
in $\cs\cd\ch_{tw}(\ca)$.
\end{lemma}

\begin{proof}
We only need to prove \eqref{eq:Mdec}.  From Proposition \ref{lemma canonical form of complex}, we obtain that
\begin{eqnarray*}
[M]&=&\langle \widehat{\Im f^0},\widehat{\ker f^0 } \rangle K_{\Im f^0} \diamond [C_{\ker f^0}^*\oplus C_{\coker f^0}]\\
&=& \langle \widehat{\Im f^0},\widehat{\ker f^0}  \rangle \frac{\langle [C_{\ker f^0}^*\oplus C_{\coker f^0}] ,[K_{\Im f^0}]\rangle }{\langle [K_{\Im f^0}], [C_{\ker f^0}^*\oplus C_{\coker f^0}]\rangle  } [C_{\ker f^0}^*\oplus C_{\coker f^0}] \diamond[ K_{\Im f^0}]\\
&=& \langle \widehat{\coker f^0} ,\widehat{\Im f^0}\rangle  [C_{\ker f^0}^*\oplus C_{\coker f^0}] \diamond [K_{\Im f^0}]\\
&=& \frac{\sqrt{\langle \widehat{\coker f^0},\widehat{\Im f^0}\rangle}}{ \sqrt{\langle \widehat{\ker f^0},\widehat{\Im f^0}\rangle} } [C_{\ker f^0}^*\oplus C_{\coker f^0}]*[K_{\Im f^0}].
\end{eqnarray*}
\end{proof}

The following result follows from Theorem \ref{lemma basis of semi-derived hall algebra of A}.

\begin{proposition}
\label{cor basis of twisted semi-derived hall algebra of A}
$\cs\cd\ch_{tw}(\ca)$ has a basis given by
\begin{equation}
\label{eqn:basistwist}
\{[C_A]*[C_B^*]* K_\alpha* K_\beta^*\mid  [A],[B]\in\Iso(\ca),\alpha,\beta\in K_0(\ca)\}.
\end{equation}
\end{proposition}

\subsection{An algebra isomorphism}
In this subsection, we prove that the twisted semi-derived Ringel-Hall algebra $\cs\cd\ch_{tw}(\ca)$ is isomorphic to the Drinfeld double of $\ch^e_{tw}(\ca)$. When $\ch^e_{tw}(\ca)$ is only a topological bialgebra, its Drinfeld double is, as a vector space, the completed tensor product $\ch^e_{tw}(\ca)\widehat{\otimes} \ch^e_{tw}(\ca)$. Accordingly, in this case, we need to complete the twisted semi-derived Ringel-Hall algebra  $\cs\cd\ch_{tw}(\ca)$ with respect to the basis
\eqref{eqn:basistwist}
obtained in Proposition \ref{cor basis of twisted semi-derived hall algebra of A}.
%For the Drinfeld double Ringel-Hall algebra of $\ch^e_{tw}(\ca)$ in the completed space accordingly we need to consider
%the twisted semi-derived Ringel-Hall algebra $\cs\cd\ch_{tw}(\ca)$ in the completed space with respect to the basis
%\eqref{eqn:basistwist}
%obtained in Proposition \ref{cor basis of twisted semi-derived hall algebra of A}.

Unless otherwise specified below, we do not distinguish the Drinfeld double of the twisted extended Ringel-Hall algebra from the completed ones, and similarly for semi-derived Ringel-Hall algebras.
First, we give some lemmas.

\begin{lemma}
Let $\ca$ be a hereditary abelian $\K$-linear category. There is an embedding of algebras
$$I_+^e:\ch^e_{tw}(\ca)\hookrightarrow\cs\cd\ch_{tw}(\ca)$$
$$[A]\mapsto [C_A],\quad k_\alpha\mapsto K_\alpha,$$
where $A\in\ca,\alpha\in K_0(\ca)$. By composing $I_+^e$ and the involution $*$, we also have an embedding
$$I_{-}^e:\ch^e_{tw}(\ca)\hookrightarrow\cs\cd\ch_{tw}(\ca)$$
$$[A]\mapsto [C_A^*],\quad k_\alpha\mapsto K^*_\alpha.$$
\end{lemma}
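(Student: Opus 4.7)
The plan is to verify that $I_+^e$ respects each defining relation of $\ch_{tw}^e(\ca)$ and then to extract injectivity from the basis theorem of Section 3. Algebra homomorphism reduces to three checks: the Hall product, the torus relation $k_\alpha * k_\beta = k_{\alpha+\beta}$, and the commutation $k_\alpha * [A] = \sqrt{(\alpha, \widehat{A})}[A]*k_\alpha$; injectivity follows because the image of a basis lands in a linearly independent part of the basis from Corollary \ref{theorem basis of modified hall algebra}.

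For the Hall product identity $I_+^e([A]*[B]) = [C_A]*[C_B]$, I would observe that since $C_A$ and $C_B$ both have zero $0$-th component, any short exact sequence $0 \to C_B \to X \to C_A \to 0$ in $\cc_{\Z/2}(\ca)$ forces $X^0 = 0$, hence the differentials of $X$ vanish and $X \cong C_{X^1}$ with $X^1$ an extension of $A$ by $B$ in $\ca$. Combined with $\Hom_{\cc_{\Z/2}(\ca)}(C_A, C_B) = \Hom_\ca(A, B)$, this yields
\[
[C_A] \diamond [C_B] = \sum_{[X] \in \Iso(\ca)} \frac{|\Ext^1_\ca(A, B)_X|}{|\Hom_\ca(A, B)|}[C_X].
\]
Direct inspection gives $\langle [C_A], [C_B]\rangle_{cw} = \sqrt{\langle \widehat{A}, \widehat{B}\rangle}$, matching the twist of $\ch_{tw}(\ca)$ exactly, so $I_+^e([A]*[B]) = [C_A]*[C_B]$.

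Next, writing $\alpha = \widehat{A_1} - \widehat{A_2}$ and $\beta = \widehat{B_1} - \widehat{B_2}$, Corollary \ref{lemma coincide of Euler forms} and the definition of $\langle\cdot, \cdot\rangle_{cw}$ give $[K_X]*[K_Y] = [K_{X \oplus Y}]$ symmetrically, so $*$-products of $K_X$-type complexes commute; therefore $K_\alpha * K_\beta = [K_{A_1 \oplus B_1}]*[K_{A_2 \oplus B_2}]^{-1} = K_{\alpha + \beta}$. The cross relation $I_+^e(k_\alpha * [A]) = K_\alpha * [C_A] = \sqrt{(\alpha, \widehat{A})}[C_A] * K_\alpha = I_+^e(\sqrt{(\alpha, \widehat{A})}[A]*k_\alpha)$ is precisely Lemma \ref{corollary commutation equations}(i). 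Thus $I_+^e$ is an algebra homomorphism.

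For injectivity I would invoke Corollary \ref{theorem basis of modified hall algebra}: specialized to $B = 0$ and $\beta = 0$, the set $\{[C_A] \diamond K_\alpha : [A] \in \Iso(\ca), \alpha \in K_0(\ca)\}$ is linearly independent in $\cm\ch_{\Z/2}(\ca)$. Since $\{k_\alpha * [A]\}$ is a basis of $\ch_{tw}^e(\ca)$ and $I_+^e(k_\alpha * [A]) = K_\alpha * [C_A]$ is a nonzero scalar multiple of $[C_A] \diamond K_\alpha$ (by Lemma \ref{corollary commutation equations}(i) together with the scalar twist relating $*$ to $\diamond$), the images of the basis are linearly independent, so $I_+^e$ is injective. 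The embedding $I_-^e$ is then obtained by post-composing $I_+^e$ with the exact involution $*$ on $\cc_{\Z/2}(\ca)$, which induces an algebra automorphism of $\cm\ch_{\Z/2,tw}(\ca)$ sending $[C_A] \mapsto [C_A^*]$ and $K_\alpha \mapsto K_\alpha^*$; the componentwise Euler form is symmetric under the swap of components, so the automorphism preserves the twist. The main obstacle is the first step, the classification of extensions computing $[C_A]*[C_B]$; once this is pinned down, every remaining verification is bookkeeping with the structural lemmas already in place.
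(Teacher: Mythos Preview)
Your proof is correct and follows the same route as the paper: injectivity is extracted from the basis description in Corollary~\ref{theorem basis of modified hall algebra}, exactly as the paper does. The paper's own proof is a single sentence that treats the algebra-homomorphism property as evident and only addresses injectivity via that corollary; you have simply filled in the homomorphism checks (Hall product, torus relation, commutation) that the paper leaves implicit, and each of your verifications is accurate.
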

\begin{proof}
We only need to check that $I_+^e$ and $I_-^e$ are embedding maps, which follows from Theorem \ref{theorem basis of modified hall algebra}.
\end{proof}

For any $U,V\in\ca$, define the group action of $\aut(U)\times \aut(V)$ on $\Hom_\ca(U,V)$ as follows: for any $(s,t)\in\aut(U)\times \aut(V)$ and $g\in\Hom_\ca(U,V)$,
$(s,t)\cdot g:=tgs^{-1}$.
We denote by \begin{align}
O=\{O_g\mid O_g\mbox{ is the orbit of }  g\in \Hom_\ca(U,V)\}
\end{align}
the set of orbits corresponding to this group action.
Then we have the following lemma.

\begin{lemma}
\label{lemma product of stalk complexes}
Retain the notations as above. For any $A_1,B_2,\widetilde{B}_1,\widetilde{A}_2\in\ca$, in $\cs\cd\ch_{tw}(\ca)$, we have
\begin{align}
\label{eq:A1B2}
[ C_{A_1}]*[C_{B_2}^*]
=&\sum_{O_g\in O:\,g\in \Hom_\ca(A_1,B_2)} |O_g|\frac{\sqrt{\langle \widehat{\coker g},\widehat{\Im g}\rangle}}{\sqrt{\langle \widehat{\ker g},\widehat{\Im g}\rangle}} [C_{\ker g}\oplus C^*_{\coker g}]*[K_{\Im g}^*];
\\
\label{eq:B1A2}
[ C_{\widetilde{B_1}}^*]*[ C_{\widetilde{A}_2}]
=&\sum_{O_f\in O:\,f\in \Hom_\ca(\widetilde{B}_1,\widetilde{A}_2)} |O_f|\frac{\sqrt{\langle \widehat{\coker f},\widehat{\Im f}\rangle}}{\sqrt{\langle \widehat{\ker f},\widehat{\Im f}\rangle}} [C^*_{\ker f}\oplus C_{\coker f}]*[K_{\Im f}].
\end{align}
\end{lemma}

\begin{proof}
We only need to prove \eqref{eq:A1B2} since \eqref{eq:B1A2} is similar.
By definition, we obtain that
\begin{eqnarray*}
[ C_{A_1}]*[C_{B_2}^*]&=& \sum_{[X]\in\Iso(\cc_{\Z/2}(\ca))} \frac{|\Ext^1_{\cc_{\Z/2}(\ca)}( C_{A_1},C_{B_2}^*)_X|}{|\Hom_{\cc_{\Z/2}(\ca)}(C_{A_1},C_{B_2}^*)|} [X].
\end{eqnarray*}
If $|\Ext^1_{\cc_{\Z/2}(\ca)}( C_{A_1},C_{B_2}^*)_X|\neq0$, then $X$ is of the form $\xymatrix{ B_2\ar@<0.5ex>[r]^0 & A_1\ar@<0.5ex>[l]^g}$ for some $g:A_1\rightarrow B_2$.

Let $X=\xymatrix{ B_2\ar@<0.5ex>[r]^0 & A_1\ar@<0.5ex>[l]^g}$.
Set
\begin{align*}\cs_1:=\{(a,b)\in&\Hom_{\cc_{\Z/2}(\ca)}(C_{B_2}^*,X)\times \Hom_{\cc_{\Z/2}(\ca)}(X,C_{A_1})\mid
 \\
 &\exists \text{ a short exact sequence }  0\longrightarrow C_{B_2}^*\xrightarrow{a} X\xrightarrow{b} C_{A_1} \longrightarrow 0\}.
 \end{align*}
Then $\cs_1$ can be identified with
$$\cs_2=\aut(B_2)\times\aut (A_1).$$
There exists a natural surjective map $\cs_1\rightarrow \Ext^1_{\cc_{\Z/2}(\ca)}(C_{A_1},C_{B_2}^*)_X$.
Now $\aut(X)$ acts on $\cs_1$ by $\nu(a,b):=(\nu a,b\nu^{-1})$ for $\nu\in\aut (X)$, and its stabilizer under the map is isomorphic to $\Hom_{\cc_{\Z/2}(\ca)}(C_{A_1},C_{B_2}^*)$, which is zero. So
$|\cs_1|= |\aut (X)| |\Ext^1_{\cc_{\Z/2}(\ca)}( C_{A_1},C_{B_2}^*)_X|$, and then
$$|\Ext^1_{\cc_{\Z/2}(\ca)}( C_{A_1},C_{B_2}^*)_X|=\frac{|\aut(B_2)||\aut(A_1)|}{|\aut(X)|}.$$

Clearly, $\aut(X)=\{(s,t)\in \aut (A_1)\times \aut(B_2) \mid t g=gs\}$, and so
$$|\Ext^1_{\cc_{\Z/2}(\ca)}( C_{A_1},C_{B_2}^*)_X|=\frac{|\aut(B_2)||\aut(A_1)|}{|\{(s,t)\in\aut (A_1)\times \aut (B_2)\mid t g=gs\}|}=|O_g|.$$
Note that there is a natural one to one correspondence between $O=\{O_g\mid g\in \Hom_\ca(A_1,B_2)\}$ and $\{[X]\in\Iso(\cc_{\Z/2}(\ca))\mid|\Ext^1_{\cc_{\Z/2}(\ca)}( C_{A_1},C_{B_2}^*)_X|\neq0\}$. The lemma now follows from \eqref{eq:Mdec2}.
\end{proof}

Let $A,B\in\ca$.
For $X,Y\in \ca,\delta,\widetilde{\delta}\in K_0(\ca)$, we set
\begin{align*}
\cu^{A,B}_{X,Y,\delta}:=&\big\{M=(\xymatrix{ A\ar@<0.5ex>[r]^{u}& B\ar@<0.5ex>[l]^{v}  })\in\cc_{\Z/2}(\ca) \mid H^0(M)\cong X,H^1(M)\cong Y, \widehat{\Im v}=\delta\big\},
\\
\cv^{A,B}_{X,Y,\widetilde{\delta}}:=&\big\{M=(\xymatrix{ A\ar@<0.5ex>[r]^{u}& B\ar@<0.5ex>[l]^{v}  })\in\cc_{\Z/2}(\ca) \mid H^0(M)\cong X,H^1(M)\cong Y, \widehat{\Im u}=\widetilde{\delta}\big\}.
\end{align*}

\begin{lemma}\label{left hand side of equality}
Let $A,B\in\ca$. For any $X,Y\in \ca,\delta,\widetilde{\delta}\in K_0(\ca)$,
we have
\begin{align}
\label{eq:U}
|\cu^{A,B}_{X,Y,\delta}|
=&\sum_{\stackrel{[A_1],[B_2],}{ \,[A_2],\widehat{A_2}=\delta}}\sum_{\tiny\begin{array}{cc}O_g:\,g \in \Hom_\ca(A_1,B_2),\\\,[\ker g]=[X] , [\coker g]=[Y]\end{array}}\frac{|\Ext^1_{\ca}(A_1,A_2)_A|}{|\Hom_{\ca}(A_1,A_2)|}
 \frac{|\Ext^1_{\ca} (A_2,B_2)_B|}{|\Hom_{\ca}(A_2,B_2)|}\\\notag
 &
 \frac{|\aut(A)||\aut(B)||O_g|}{|\aut(A_2)||\aut (A_1)||\aut (B_2)|};
\\
\label{eq:V}
|\cv^{A,B}_{X,Y,\widetilde{\delta}}|
=&\sum_{\stackrel{[\widetilde{A}_2],[\widetilde{B}_1]}{
\,[\widetilde{A}_1],\widehat{\widetilde{A}_1}=\widetilde{\delta}}} \sum_{\tiny\begin{array}{cc}O_f:\, f\in \Hom_\ca(\widetilde{B_1},\widetilde{A}_2),\\\,
[\ker f]=[Y] , \, [\coker f]=[X]\end{array}}  \frac{|\Ext^1_{\ca}(\widetilde{A}_1,\widetilde{A}_2)_A|}{|\Hom_{\ca}(\widetilde{A}_1,\widetilde{A}_2)|}
\frac{|\Ext^1_{\ca} (\widetilde{B}_1,\widetilde{A}_1)_B|}{|\Hom_{\ca}(\widetilde{B}_1,\widetilde{A}_1)|}\\\notag
&
\frac{|\aut(A)||\aut(B)||O_f|}{|\aut(\widetilde{A}_1)||\aut (\widetilde{A}_2)||\aut (\widetilde{B}_1)| }.
 \end{align}
\end{lemma}
\begin{proof}
We only prove \eqref{eq:U} since \eqref{eq:V} is similar. We have

\begin{eqnarray*}
\cu^{A,B}_{X,Y,\delta}
&=& \bigsqcup_{\stackrel{[A_1],[B_2],}{ \,[A_2],\widehat{A_2}=\delta}} \{ M=(\xymatrix{ A\ar@<0.5ex>[r]^{u}& B\ar@<0.5ex>[l]^{v}  })\in \cu^{A,B}_{X,Y,\delta} \mid\coker v\cong A_1,\ker v\cong B_2,\Im v\cong A_2\}\\
&=& \bigsqcup_{\stackrel{[A_1],[B_2],}{ \,[A_2],\widehat{A_2}=\delta}}\{\big((a_1,a_2),(b_1,b_2),u\big)\in \big(\Hom_{\ca}(A_2,A)\times \Hom_{\ca}(A,A_1)\big)\times\big(\Hom_{\ca}(B_2,B)\\
&&\qquad\qquad\times \Hom_{\ca}(B,A_2)\big) \times \Hom_{\ca}(A,B)\mid  0\longrightarrow  A_2\xrightarrow{a_1} A\xrightarrow{a_2} A_1\longrightarrow0, \\
&&\qquad\qquad0\longrightarrow B_2\xrightarrow{b_1} B\xrightarrow{b_2} A_2\longrightarrow0 \mbox{ are exact}, u a_1=0,b_2u=0 \}/ H_1,
\end{eqnarray*}
where $H_1=(\aut (A_1)\times \aut(A_2)\times \aut(B_2))$ and the group action of $H_1$ is defined as follows:
for $(s_{A_1},s_{A_2},s_{B_2})\in H_1$, and $\big((a_1,a_2),(b_1,b_2),u\big)$,
\begin{equation*}
(s_{A_1},s_{A_2},s_{B_2})\cdot\big((a_1,a_2),(b_1,b_2),u\big):=\big((a_1s_{A_2}^{-1}, s_{A_1}a_2),(b_1s_{B_2}^{-1},s_{A_2}b_2),u\big).
\end{equation*}
For $\big((a_1,a_2),(b_1,b_2),u\big)$, since $ua_1=0,b_2u=0$, there exists a unique $g\in \Hom_\ca(A_1,B_2)$ such that
$u= b_1 ga_2$. Note that $H^0(M)\cong \ker g$ and $H^1(M)\cong \coker g$.
Then
\begin{eqnarray*}
\cu^{A,B}_{X,Y,\delta}= \bigsqcup_{\stackrel{[A_1],[B_2],}{\,[A_2],\widehat{A_2}=\delta}} \bigsqcup_{\tiny\begin{array}{cc}O_g:\, g\in\Hom_{\ca}(A_1,B_2), \\\,[\ker g]=[X] , [\coker g]=[Y]\end{array}} \cu_{O_g}/H_1,
\end{eqnarray*}
where %$H_1=(\aut (A_1)\times \aut(A_2)\times \aut(B_2))$ and
\begin{eqnarray*}
\cu_{O_g}&:=&\{ \big((a_1,a_2),(b_1,b_2),g'\big)\in  \big(\Hom_{\ca}(A_2,A)\times \Hom_{\ca}(A,A_1)\big)\times\big(\Hom_{\ca}(B_2,B)\\
&&\times \Hom_{\ca}(B,A_2)\big)\times \Hom_{\ca}(A_1,B_2)\mid 0\longrightarrow  A_2\xrightarrow{a_1} A\xrightarrow{a_2} A_1\longrightarrow0, \\
&&0\longrightarrow B_2\xrightarrow{b_1} B\xrightarrow{b_2} A_2\longrightarrow0 \mbox{ are exact}, g'\in O_g \},
\end{eqnarray*}
and the group action of $H_1$ on $\cu_{O_g}$ is defined as follows:
for $((a_1,a_2),(b_1,b_2),g')\in \cu_{O_g}$, and $(s_{A_1},s_{A_2},s_{B_2})\in H_1$,
\begin{equation*}
\big(s_{A_1},s_{A_2},s_{B_2}\big)\cdot\big((a_1,a_2),(b_1,b_2),g'\big):=\big((a_1s_{A_2}^{-1}, s_{A_1}a_2),(b_1s_{B_2}^{-1},s_{A_2}b_2),s_{B_2}g's_{A_1}^{-1}\big).
\end{equation*}

Furthermore,
\begin{eqnarray*}
\cu_{O_g}&=&\{(a_1,a_2)\in  (\Hom_{\ca}(A_2,A)\times \Hom_{\ca}(A,A_1))\mid  0\longrightarrow  A_2\xrightarrow{a_1} A\xrightarrow{a_2} A_1\longrightarrow0\mbox{ is exact}\}\\
&&\times \{(b_1,b_2)\in (\Hom_{\ca}(B_2,B)\times \Hom_{\ca}(B,A_2))\mid 0\longrightarrow B_2\xrightarrow{b_1} B\xrightarrow{b_2} A_2\longrightarrow0 \mbox{ is exact}\}\\
&&\times O_g.
\end{eqnarray*}
Thus,
\begin{eqnarray*}
|\cu_{O_g}|= \frac{|\Ext^1_{\ca}(A_1,A_2)_A||\aut(A)|}{|\Hom_{\ca}(A_1,A_2)|} \frac{|\Ext^1_{\ca}(A_2,B_2)_B||\aut(B)|}{|\Hom_{\ca}(A_2,B_2)|} |O_g|.
\end{eqnarray*}
Clearly, the group action of $H_1$ on $\cu_{O_g}$ is free. So
\begin{eqnarray*}
|\cu_{O_g}/H_1|=\frac{|\Ext^1_{\ca}(A_1,A_2)_A|}{|\Hom_{\ca}(A_1,A_2)|}
  \frac{|\Ext^1_{\ca} (A_2,B_2)_B|}{|\Hom_{\ca}(A_2,B_2)|} \frac{|\aut(A)||\aut(B)||O_g|}{|\aut(A_2)||\aut (A_1)||\aut (B_2)|}.
 \end{eqnarray*}
and then our desired result follows.
\end{proof}

Now we can prove the main result in this section.

\begin{theorem}
\label{theorem semi-derived hall algebra isomorphic to Drinfeld double}
Let $\ca$ be a hereditary abelian $\K$-linear category. Then the twisted semi-derived Ringel-Hall algebra $\cs\cd\ch_{tw}(\ca)$ is isomorphic to the Drinfeld double of $\ch^e_{tw}(\ca)$.
\end{theorem}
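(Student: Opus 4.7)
The plan is to construct an algebra isomorphism
$$\Phi\colon \ch^e_{tw}(\ca)\otimes\ch^e_{tw}(\ca)\longrightarrow \cm\ch_{\Z/2,tw}(\ca),\qquad a\otimes b\longmapsto I_+^e(a)*I_-^e(b),$$
where the domain carries the Drinfeld double multiplication. Since the Drinfeld double is characterized by conditions (D1)-(D3), it suffices to verify that the subalgebra of $\cm\ch_{\Z/2,tw}(\ca)$ generated by $I_+^e(\ch^e_{tw}(\ca))$ and $I_-^e(\ch^e_{tw}(\ca))$ satisfies those three relations, and that $\Phi$ is a linear bijection.

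Bijectivity of $\Phi$ is the easy step. For $[A],[B]\in\Iso(\ca)$ and $\alpha,\beta\in K_0(\ca)$, the element $\Phi\bigl((k_\alpha*[A])\otimes(k_\beta*[B])\bigr)=K_\alpha*[C_A]*K_\beta^{*}*[C_B^{*}]$ can be rewritten, by successive applications of Lemma \ref{corollary commutation equations}, as a nonzero scalar multiple of $[C_A]*[C_B^{*}]*K_\alpha*K_\beta^{*}$. By Lemma \ref{lemma basis of twisted semi-derived hall algebra of A} the latter family is a basis of $\cm\ch_{\Z/2,tw}(\ca)$, so $\Phi$ takes a basis to a basis (up to invertible rescaling) and is a linear isomorphism.

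For multiplicativity, (D1) is the assertion that $I_+^e$ and $I_-^e$ are algebra embeddings, and (D2) is built into the definition of $\Phi$. The crux is (D3), the commutation rule
$$\sum \varphi(a_{(2)},b_{(1)})\,I_+^e(a_{(1)})*I_-^e(b_{(2)}) \;=\; \sum \varphi(a_{(1)},b_{(2)})\,I_-^e(b_{(1)})*I_+^e(a_{(2)}).$$
For generators of the form $k_\alpha$ this follows from Lemma \ref{corollary commutation equations} together with the coproduct formulas \eqref{equation coproduct 1}-\eqref{equation coproduct 2}, so I may reduce to the case $a=[A_1]$, $b=[B_2]$ with $A_1,B_2\in\ca$. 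In that case the left-hand side is $I_+^e([A_1])*I_-^e([B_2])=[C_{A_1}]*[C_{B_2}^{*}]$, which Lemma \ref{lemma product of stalk complexes}(i) expresses as a sum over orbits $O_g$ with $g\in\Hom_\ca(A_1,B_2)$. The right-hand side expands through Green's coproducts $\Delta([A_1])$ and $\Delta([B_2])$ into a sum of terms of the form $[C_{\widetilde{B}_1}^{*}]*[C_{\widetilde{A}_2}]$ multiplied by the acyclic factors $K_{\widehat{\widetilde{A}_1}}$, $K_{\widehat{\widetilde{B}_1}}^{*}$ arising from the $k_{\widehat{\cdot}}$ pieces of the coproduct, and Lemma \ref{lemma product of stalk complexes}(ii) re-expands these as sums over orbits $O_f$ with $f\in\Hom_\ca(\widetilde{B}_1,\widetilde{A}_2)$.

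The main obstacle is the explicit matching of coefficients against a fixed basis element $[C_X\oplus C_Y^{*}]*K_\gamma*K_\delta^{*}$. After collecting the Euler-form twists through Proposition \ref{lema euler form} and Corollary \ref{lemma coincide of Euler forms}, absorbing the kernel/cokernel normalizations supplied by Lemma \ref{lemma 2 complexes to stalk complexes}, and rewriting $K_\alpha$ and $K_\beta^{*}$ via their definitions, the coefficient of the basis element on the left-hand side reduces to $|\cu_{[X],[Y],\delta}|$ times a scalar prefactor, while the coefficient on the right-hand side reduces to $|\cv_{[X],[Y],\widetilde{\delta}}|$ times the same scalar prefactor. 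Lemma \ref{left hand side of equality} supplies both cardinalities as Hall-style double sums, and they agree because $\cu_{[X],[Y],\delta}$ and $\cv_{[X],[Y],\widetilde{\delta}}$ parameterize the same class of $\Z/2$-graded complexes once the relation $\widehat{\Im d^0}-\widehat{\Im d^1}=\widehat{X}-\widehat{Y}$ dictated by the exact cohomology sequence is imposed; the residual matching of the pairing factors $\sqrt{(\alpha,\widehat{A})}$ against the componentwise twist $\langle -,-\rangle_{cw}$ is routine via Corollary \ref{lemma coincide of Euler forms}. With (D3) established, universality of the Drinfeld double upgrades the linear isomorphism $\Phi$ to the desired algebra isomorphism, completing the proof.
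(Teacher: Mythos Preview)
Your global strategy is the same as the paper's: show that $\Phi(a\otimes b)=I_+^e(a)*I_-^e(b)$ is a linear bijection via the basis of Lemma \ref{lemma basis of twisted semi-derived hall algebra of A}, then verify (D1)--(D3) inside $\cm\ch_{\Z/2,tw}(\ca)$, with (D3) for $(a,b)=([A],[B])$ as the core computation, handled through Lemmas \ref{lemma product of stalk complexes} and \ref{left hand side of equality}.

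There is, however, a genuine gap in your treatment of the base case. You assert that for $a=[A]$ and $b=[B]$ the left-hand side of (D3) collapses to the single product $[C_{A}]*[C_{B}^*]$. It does not. Both sides of (D3) are sums over the Sweedler pieces of Green's coproduct. Writing $\Delta([A])=\sum_{[A_1],[A_2]}c^{A}_{A_1,A_2}\,([A_1]*k_{\widehat{A_2}})\otimes [A_2]$ and similarly for $[B]$, the pairing $\varphi$ on the left forces $[A_2]=[B_1]$ but certainly not $A_2=0$; so the left side is a genuine sum over $[A_1],[A_2],[B_2]$ of terms proportional to $[C_{A_1}]*K_{\widehat{A_2}}*[C_{B_2}^*]$, and only after commuting $K_{\widehat{A_2}}$ past $[C_{B_2}^*]$ does Lemma \ref{lemma product of stalk complexes}(i) apply term-by-term. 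The right side is symmetrically a sum over $[\widetilde{A}_1],[\widetilde{A}_2],[\widetilde{B}_1]$. It is precisely this \emph{double} summation, on each side, that Lemma \ref{left hand side of equality} packages into $|\cu_{[X],[Y],\delta}|$ and $|\cv_{[X],[Y],\widetilde{\delta}}|$; in your write-up only the right-hand side receives the coproduct expansion, so the bookkeeping cannot balance. The correct matching condition is $\delta+\widetilde{\delta}=\widehat{A}-\widehat{X}$ (coming from $\widehat{A}=\widehat{\ker u}+\widehat{\Im u}$ and $\widehat{X}=\widehat{\ker u}-\widehat{\Im v}$), not the relation $\widehat{\Im d^0}-\widehat{\Im d^1}=\widehat{X}-\widehat{Y}$ you invoke, which is equivalent to $\widehat{A}-\widehat{B}=\widehat{X}-\widehat{Y}$ and carries no information about $\delta,\widetilde{\delta}$ individually. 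Once both sides are expanded symmetrically and this index identification is made, the argument closes exactly as the paper does.
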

\begin{proof}
It follows from Proposition \ref{cor basis of twisted semi-derived hall algebra of A} %and Theorem \ref{theorem basis of modified hall algebra}
that
the multiplication map $m:a\otimes b\mapsto I^e_+(a)*I^e_-(b)$ defines an isomorphism of vector spaces
$$m:\ch^e_{tw}(\ca)\otimes_\C \ch^e_{tw}(\ca)\xrightarrow{\sim}\cs\cd\ch_{tw}(\ca).$$
Since the Drinfeld double of the twisted extended Ringel-Hall algebra $\ch^e_{tw}(\ca)$ is isomorphic to the left hand side as a vector space, the map $m$ also induces its isomorphism to $\cs\cd\ch_{tw}(\ca)$.

The proof is reduced to checking the equation (\ref{equation drinfeld double 2}) for the elements  of the basis of $\ch^e_{tw}(\ca)$.
Let us write it in the current setting as
\begin{equation}
\label{equation last identity}
\sum \varphi(a_{(2)},b_{(1)}) I_+^e(a_{(1)})*I_-^e(b_{(2)})=\sum \varphi(a_{(1)},b_{(2)}) I_-^e(b_{(1)})*I_+^e(a_{(2)}).
\end{equation}

Recall that the skew-Hopf pairing is $\varphi([M]*k_\alpha,[N]*k_\beta)=\sqrt{(\alpha,\beta)}|\aut(M)|\delta_{[M],[N]}$ for $[M],[N]\in\Iso(\ca)$, $\alpha,\beta\in K_0(\ca)$.

First, we prove \eqref{equation last identity} when $(a,b)=([A],[B])$.

Since
$$\Delta([A])=\sum_{[A_1],[A_2]}\sqrt{\langle \widehat{A_1},\widehat{A_2}\rangle}\frac{|\Ext^1_{\ca}(A_1,A_2)_A|}{|\Hom_{\ca}(A_1,A_2)|}\frac{|\aut(A)|}{|\aut(A_1)||\aut(A_2)|}([A_1]*k_{\widehat{A_2}})\otimes[A_2],$$
and
$$\Delta([B])=\sum_{[B_1],[B_2]}\sqrt{\langle \widehat{B_1},\widehat{B_2}\rangle}\frac{|\Ext^1_{\ca}(B_1,B_2)_B|}{|\Hom_{\ca}(B_1,B_2)|}\frac{|\aut(B)|}{|\aut(B_1)||\aut(B_2)|}([B_1]*k_{\widehat{B_2}})\otimes[B_2],$$
the equation (\ref{equation last identity}) in this case becomes
\begin{align}
\label{equation case 4}
&\sum_{[A_1],[A_2],[B_1],[B_2]} \varphi([A_2],[B_1]*k_{\widehat{B_2}})\sqrt{\langle \widehat{A_1},\widehat{A_2}\rangle} \frac{|\Ext^1_{\ca}(A_1,A_2)_A|}{|\Hom_{\ca}(A_1,A_2)|} \frac{|\aut(A)|}{|\aut(A_1)||\aut(A_2)|}
\\\notag
&\sqrt{\langle \widehat{B_1},\widehat{B_2}\rangle}\frac{|\Ext^1_{\ca}(B_1,B_2)_B|}{|\Hom_{\ca}(B_1,B_2)|}\frac{|\aut(B)|}{|\aut(B_1)||\aut(B_2)|}[C_{A_1}]*[K_{A_2}]*[C_{B_2}^*]\\\notag
=&\sum_{[A_1],[A_2],[B_1],[B_2]} \varphi([A_1]*k_{\widehat{A_2}},[B_2])\sqrt{\langle \widehat{A_1},\widehat{A_2}\rangle} \frac{|\Ext^1_{\ca}(A_1,A_2)_A|}{|\Hom_{\ca}(A_1,A_2)|} \frac{|\aut(A)|}{|\aut(A_1)||\aut(A_2)|}\\\notag
&\sqrt{\langle \widehat{B_1},\widehat{B_2}\rangle}\frac{|\Ext^1_{\ca}(B_1,B_2)_B|}{|\Hom_{\ca}(B_1,B_2)|}\frac{|\aut(B)|}{|\aut(B_1)||\aut(B_2)|}[C_{B_1}^*]*[K_{B_2}^*]*[ C_{A_2}].
\end{align}

We compute
\begin{align*}
&\mbox{LHS of }(\ref{equation case 4})\\
=&\sum_{[A_1],[A_2],[B_2]} |\aut(A_2)|\sqrt{\langle \widehat{A_1},\widehat{A_2}\rangle} \frac{|\Ext^1_{\ca}(A_1,A_2)_A|}{|\Hom_{\ca}(A_1,A_2)|} \frac{|\aut(A)|}{|\aut (A_1)||\aut (A_2)|}
\\
&\sqrt{\langle \widehat{A_2},\widehat{B_2}\rangle}\frac{|\Ext^1_{\ca} (A_2,B_2)_B|}{|\Hom_{\ca}(A_2,B_2)|}\frac{|\aut(B)|}{|\aut (A_2)||\aut (B_2)|}[C_{A_1}]*[K_{A_2}]*[C_{B_2}^*]
\\
=&\sum_{[A_1],[A_2],[B_2]} |\aut(A_2)|\sqrt{\langle \widehat{A_1},\widehat{A_2}\rangle} \frac{|\Ext^1_{\ca}(A_1,A_2)_A|}{|\Hom_{\ca}(A_1,A_2)|} \frac{|\aut(A)|}{|\aut (A_1)||\aut (A_2)|}\\
&\sqrt{\langle \widehat{A_2},\widehat{B_2}\rangle}\frac{|\Ext^1_{\ca} (A_2,B_2)_B|}{|\Hom_{\ca}(A_2,B_2)|}\frac{|\aut(B)|}{|\aut (A_2)||\aut (B_2)|}\frac{1}{\sqrt{(\widehat{A_2},\widehat{B_2})}}
[C_{A_1}]*[C_{B_2}^*]*[K_{A_2}]
\\
=&\sum_{\tiny\begin{array}{cc}[A_1],[A_2],[B_2]\\ O_g:\,g\in \Hom_\ca (A_1,B_2)\end{array}} |\aut(A_2)|\sqrt{\langle \widehat{A_1},\widehat{A_2}\rangle} \frac{|\Ext^1_{\ca}(A_1,A_2)_A|}{|\Hom_{\ca}(A_1,A_2)|} \frac{|\aut(A)|}{|\aut (A_1)||\aut (A_2)|}\\
&\sqrt{\langle \widehat{A_2},\widehat{B_2}\rangle}\frac{|\Ext^1_{\ca} (A_2,B_2)_B|}{|\Hom_{\ca}(A_2,B_2)|}\frac{|\aut(B)|}{|\aut (A_2)||\aut (B_2)|}\frac{1}{\sqrt{(\widehat{A_2},\widehat{B_2})}}\frac{\sqrt{\langle \widehat{\coker g},\widehat{\Im g}\rangle}}{\sqrt{\langle \widehat{\ker g},\widehat{\Im g}\rangle}} \\
&|O_g|[C_{\ker g}\oplus C^*_{\coker g}]*[K_{\Im g}^*]*[K_{A_2}]
\end{align*}
by using \eqref{eq:A1B2}. Then we have
\begin{align*}
&\mbox{LHS of }(\ref{equation case 4})\\
=&\sum_{ \stackrel{ \delta\in K_0(\ca),}{ \,[X],[Y]}}\Big(\sum_{\stackrel{[A_1],[B_2],}{
 \,[A_2],\widehat{A_2}=\delta }} \sum_{\tiny\begin{array}{cc}O_g:\,g \in \Hom_\ca (A_1,B_2),\\\,[\ker g]=[X] , [\coker g]=[Y]\end{array}}|\aut(A_2)|\sqrt{\langle \widehat{A_1},\widehat{A_2}\rangle} \frac{|\Ext^1_{\ca}(A_1,A_2)_A|}{|\Hom_{\ca}(A_1,A_2)|}
\\
& \frac{|\aut(A)|}{|\aut (A_1)||\aut (A_2)|} \sqrt{\langle \widehat{A_2},\widehat{B_2}\rangle}\frac{|\Ext^1_{\ca} (A_2,B_2)_B|}{|\Hom_{\ca}(A_2,B_2)|}\frac{|\aut(B)|}{|\aut (A_2)||\aut (B_2)|}\frac{|O_g|}{\sqrt{(\widehat{A_2},\widehat{B_2})}}\\
&\frac{\sqrt{\langle \widehat{Y},\widehat{A}-\delta-\widehat{X}\rangle}}{\sqrt{\langle \widehat{X}, \widehat{A}-\delta-\widehat{X}\rangle}}\Big) [C_{X}\oplus C^*_{Y}]*K_{\widehat{A}-\delta-\widehat{X}}^**K_{\delta},
\end{align*}
since $\widehat{\Im g}=\widehat{A_1}-\widehat{\ker g}=\widehat{A}-\widehat{A_2}-\widehat{\ker g}$ if the coefficients are nonzero.
Furthermore, if the coefficients are nonzero, then
\begin{align*}
&\sqrt{\langle \widehat{A_1},\widehat{A_2}\rangle} \sqrt{ \langle \widehat{A_2},\widehat{B_2}\rangle} \frac{1}{\sqrt{(\widehat{A_2},\widehat{B_2})}} \frac{\sqrt{\langle \widehat{Y},\widehat{A}-\delta-\widehat{X}\rangle}}{\sqrt{\langle \widehat{X},\widehat{A}-\delta-\widehat{X}\rangle}}\\
=&\sqrt{ \frac{\langle \widehat{A_1},\widehat{A_2}\rangle}{\langle \widehat{B_2},\widehat{A_2}\rangle}}  \frac{\sqrt{\langle \widehat{Y},\widehat{A}-\delta-\widehat{X}\rangle}}{\sqrt{\langle \widehat{X},\widehat{A}-\delta-\widehat{X}\rangle}}
\\
=&\sqrt{ \frac{\langle \widehat{\ker g}+\widehat{\Im g}\widehat{,A_2}\rangle}{\langle \widehat{\coker g}+\widehat{\Im g},\widehat{A_2}\rangle}}  \frac{\sqrt{\langle \widehat{Y},\widehat{A}-\delta-\widehat{X}\rangle}}{\sqrt{\langle \widehat{X},\widehat{A}-\delta-\widehat{X}\rangle}}\\
=&\sqrt{ \frac{\langle\widehat{ X},\delta\rangle}{\langle \widehat{Y},\delta\rangle}}  \frac{\sqrt{\langle \widehat{Y},\widehat{A}-\delta-\widehat{X}\rangle}}{\sqrt{\langle \widehat{X},\widehat{A}-\delta-\widehat{X}\rangle}}.
\end{align*}
Together with \eqref{eq:U}, we obtain that the left hand side of (\ref{equation case 4}) is equal to
\begin{eqnarray*}
&&\sum_{ \stackrel{ \delta\in K_0(\ca),}{ \,[X],[Y]}}\Big(\sum_{\stackrel{[A_1],[B_2],}{
 \,[A_2],\widehat{A_2}=\delta }} \sum_{\tiny\begin{array}{cc}O_g:\,g \in \Hom_\ca (A_1,B_2),\\\,[\ker g]=[X] , [\coker g]=[Y]\end{array}}\sqrt{ \frac{\langle \widehat{X},\delta\rangle}{\langle \widehat{Y},\delta\rangle}}  \frac{\sqrt{\langle \widehat{Y},\widehat{A}-\delta-\widehat{X}\rangle}}{\sqrt{\langle \widehat{X},\widehat{A}-\delta-\widehat{X}\rangle}}\frac{|\Ext^1_{\ca}(A_1,A_2)_A|}{|\Hom_{\ca}(A_1,A_2)|}
 \\
&&\frac{|\Ext^1_{\ca} (A_2,B_2)_B|}{|\Hom_{\ca}(A_2,B_2)|}\frac{|\aut(B)||\aut(A)||O_g|}{|\aut(A_2)||\aut(A_1)||\aut(B_2)|}\Big) [C_{X}\oplus C^*_{Y}]*K_{\widehat{A}-\delta-\widehat{X}}^**K_{\delta}\\
&=&\sum_{ \stackrel{ \delta\in K_0(\ca),}{ \,[X],[Y]}}\Big( \sqrt{ \frac{\langle \widehat{X},\delta\rangle}{\langle \widehat{Y},\delta\rangle}}  \frac{\sqrt{\langle \widehat{Y},\widehat{A}-\delta-\widehat{X}\rangle}}{\sqrt{\langle \widehat{X},\widehat{A}-\delta-\widehat{X}\rangle}}|\cu^{A,B}_{X,Y,\delta}|\Big) [C_{X}\oplus C^*_{Y}]*K_{\widehat{A}-\delta-\widehat{X}}^**K_{\delta}.
\end{eqnarray*}

Similarly, the right hand side of (\ref{equation case 4}) is equal to
\begin{align*}
&\mbox{RHS of }(\ref{equation case 4})\\
=&\sum_{[\widetilde{A}_1],[\widetilde{A}_2],[\widetilde{B}_1]} |\aut(\widetilde{A}_1)|\sqrt{\langle \widehat{\widetilde{A}_1},\widehat{\widetilde{A}_2}\rangle} \frac{|\Ext^1_{\ca}(\widetilde{A}_1,\widetilde{A}_2)_A|}{|\Hom_{\ca}(\widetilde{A}_1,\widetilde{A}_2)|} \frac{|\aut(A)|}{|\aut (\widetilde{A}_1)||\aut (\widetilde{A}_2)|}\\
&\sqrt{\langle \widehat{\widetilde{B}_1},\widehat{\widetilde{A}_1}\rangle}\frac{|\Ext^1_{\ca} (\widetilde{B}_1,\widetilde{A}_1)_B|}{|\Hom_{\ca}(\widetilde{B}_1,\widetilde{A}_1)|}
\frac{|\aut(B)|}{|\aut (\widetilde{B}_1)||\aut (\widetilde{A}_1)|}[C_{\widetilde{B_1}}^*]* [K_{ \widetilde{A}_1}^*]*[C_{\widetilde{A}_2}]\\
=&\sum_{\stackrel{[\widetilde{A}_1],[\widetilde{A}_2],[\widetilde{B}_1]}{
O_f:f\in\Hom_\ca(\widetilde{B}_1,\widetilde{A}_2)}} |\aut(\widetilde{A}_1)|\sqrt{\langle \widehat{\widetilde{A}_1},\widehat{\widetilde{A}_2}\rangle} \frac{|\Ext^1_{\ca}(\widetilde{A}_1,\widetilde{A}_2)_A|}{|\Hom_{\ca}(\widetilde{A}_1,\widetilde{A}_2)|} \frac{|\aut(A)|}{|\aut (\widetilde{A}_1)||\aut (\widetilde{A}_2)|}\\
&\sqrt{\langle \widehat{\widetilde{B}_1},\widehat{\widetilde{A}_1}\rangle}\frac{|\Ext^1_{\ca} (\widetilde{B}_1,\widetilde{A}_1)_B|}{|\Hom_{\ca}(\widetilde{B}_1,\widetilde{A}_1)|}\frac{|\aut(B)|}{|\aut (\widetilde{B}_1)||\aut (\widetilde{A}_1)|}
\frac{|O_f|}{\sqrt{(\widehat{\widetilde{A}_1},\widehat{\widetilde{A}_2})}}\frac{\sqrt{\langle \widehat{\coker f},\widehat{\Im f}\rangle}}{\sqrt{\langle \widehat{\ker f},\widehat{\Im f}\rangle}} \\
&[C^*_{\ker f}\oplus C_{\coker f}]*[K_{\Im f}]*[K_{\widetilde{A}_1}^*]
\end{align*}
by \eqref{eq:B1A2}. Then we have
\begin{align*}
&\mbox{RHS of }(\ref{equation case 4})
\\
=&\sum_{ \stackrel{ \widetilde{\delta}\in K_0(\ca),}{\,[X],[Y]}} \sum_{\stackrel{[\widetilde{A}_2],[\widetilde{B}_1]}
{\,[\widetilde{A}_1],\widehat{\widetilde{A}_1}=\widetilde{\delta}}} \sum_{\tiny\begin{array}{cc}O_f:\, f\in \Hom_\ca(\widetilde{B_1},\widetilde{A}_2),\\\,
[\ker f]=[Y] , [\coker f]=[X]\end{array}}|\aut(\widetilde{A}_1)|\sqrt{\langle \widehat{\widetilde{A}_1},\widehat{\widetilde{A}_2}\rangle} \frac{|\Ext^1_{\ca}(\widetilde{A}_1,\widetilde{A}_2)_A|}{|\Hom_{\ca}(\widetilde{A}_1,\widetilde{A}_2)|} \\
& \frac{|\aut(A)|}{|\aut (\widetilde{A}_1)||\aut (\widetilde{A}_2)|}\sqrt{\langle \widehat{\widetilde{B}_1},\widehat{\widetilde{A}_1}\rangle}
\frac{|\Ext^1_{\ca} (\widetilde{B}_1,\widetilde{A}_1)_B|}{|\Hom_{\ca}(\widetilde{B}_1,\widetilde{A}_1)|}
\frac{|\aut(B)|}{|\aut (\widetilde{B}_1)||\aut (\widetilde{A}_1)|}
\frac{|O_f|}{\sqrt{(\widehat{\widetilde{A}_1},\widehat{\widetilde{A}_2})}}\\
&\frac{\sqrt{\langle \widehat{X}, \widehat{A}-\widetilde{\delta}-\widehat{X}\rangle}}{\sqrt{\langle \widehat{Y},\widehat{A}-\widetilde{\delta}-\widehat{X}\rangle}} [C^*_{Y}\oplus C_{X}]*K_{\widehat{A}-\widetilde{\delta}-\widehat{X}}*K_{\widetilde{\delta}}^*
\\
=&\sum_{\stackrel{ \widetilde{\delta}\in K_0(\ca),}{ \,[X],[Y]}} \sum_{\stackrel{[\widetilde{A}_2],[\widetilde{B}_1]}{
\,[\widetilde{A}_1],\widehat{\widetilde{A}_1}=\widetilde{\delta}}} \sum_{\tiny\begin{array}{cc}O_f:\, f\in \Hom_\ca(\widetilde{B_1},\widetilde{A}_2),\\\,
[\ker f]=[Y],\, [\coker f]=[X]\end{array}} \sqrt{\frac{\langle \widehat{Y},\widetilde{\delta}\rangle}{\langle \widehat{X},\widetilde{\delta}\rangle}} \sqrt{\frac{\langle \widehat{X},\widehat{A}-\widetilde{\delta}-\widehat{X}\rangle}{\langle \widehat{Y},\widehat{A}-\widetilde{\delta}-\widehat{X}\rangle}} \\
&  \frac{|\Ext^1_{\ca}(\widetilde{A}_1,\widetilde{A}_2)_A|}{|\Hom_{\ca}(\widetilde{A}_1,\widetilde{A}_2)|}
\frac{|\Ext^1_{\ca} (\widetilde{B}_1,\widetilde{A}_1)_B|}{|\Hom_{\ca}(\widetilde{B}_1,\widetilde{A}_1)|}
\left.\frac{|\aut(A)||\aut(B)||O_f|}{|\aut(\widetilde{A}_1)||\aut (\widetilde{A}_2)||\aut (\widetilde{B}_1)|}\right)\\
&[C^*_{Y}\oplus C_{X}]*K_{\widehat{A}-\widetilde{\delta}-\widehat{X}}*K_{\widetilde{\delta}}^*\\
=& \sum_{ \stackrel{ \widetilde{\delta}\in K_0(\ca),}{ \,[X],[Y]}} \left(\sqrt{\frac{\langle \widehat{Y},\widetilde{\delta}\rangle}{\langle \widehat{X},\widetilde{\delta}}} \sqrt{\frac{\langle \widehat{X},\widehat{A}-\widetilde{\delta}-\widehat{X}\rangle}{\langle \widehat{Y},\widehat{A}-\widetilde{\delta}-\widehat{X}\rangle}}|\cv^{A,B}_{X,Y,\widetilde{\delta}}|\right)[C^*_{Y}\oplus C_{X}]*K_{\widehat{A}-\widetilde{\delta}-\widehat{X}}*K_{\widetilde{\delta}}^*.
\end{align*}
By definitions, for any $[X],[Y]\in\Iso(\ca)$ and $\delta,\widetilde{\delta}\in K_0(\ca)$, if $\delta+\widetilde{\delta}=\widehat{A}-\widehat{X}$ in $K_0(\ca)$, then $\cu^{A,B}_{X,Y,\delta}=\cv^{A,B}_{X,Y,\widetilde{\delta}}$. Therefore, $\mbox{LHS of }(\ref{equation case 4})=\mbox{RHS of }(\ref{equation case 4})$, and then the identity (\ref{equation case 4}) holds.

Secondly, we prove that (\ref{equation last identity}) holds for the general case $(a,b)=([A]*k_\alpha,[B]* k_\beta)$.
Recall that
\begin{align*}
&\Delta([A]*k_\alpha)=\sum_{[A_1],[A_2]}\sqrt{\langle \widehat{A_1},\widehat{A_2}\rangle}\frac{|\Ext^1_{\ca}(A_1,A_2)_A|}{|\Hom_{\ca}(A_1,A_2)|}\frac{|\aut(A)|}{|\aut(A_1)||\aut(A_2)|}([A_1]*k_{\widehat{A_2}+\alpha})\otimes[A_2]*k_\alpha,
\end{align*}
\begin{align*}
&\Delta([B]*k_\beta)=\sum_{[B_1],[B_2]}\sqrt{\langle \widehat{B_1},\widehat{B_2}\rangle}\frac{|\Ext^1_{\ca}(B_1,B_2)_B|}{|\Hom_{\ca}(B_1,B_2)|}\frac{|\aut(B)|}{|\aut(B_1)||\aut(B_2)|}([B_1]*k_{\widehat{B_2}+\beta})\otimes[B_2]*k_\beta.
\end{align*}
Then in this case, the left hand side of (\ref{equation last identity}) is equal to
\begin{align*}
&\sum_{[A_1],[A_2],[B_1],[B_2]} \varphi([A_2]*k_\alpha,[B_1]*k_{\widehat{B_2}+\beta})\sqrt{\langle \widehat{A_1},\widehat{A_2}\rangle} \frac{|\Ext^1_{\ca}(A_1,A_2)_A|}{|\Hom_{\ca}(A_1,A_2)|} \frac{|\aut(A)|}{|\aut(A_1)||\aut(A_2)|}\\
&\sqrt{\langle \widehat{B_1},\widehat{B_2}\rangle}\frac{|\Ext^1_{\ca}(B_1,B_2)_B|}{|\Hom_{\ca}(B_1,B_2)|}\frac{|\aut(B)|}{|\aut(B_1)||\aut(B_2)|}[C_{A_1}]*K_{\widehat{A_2}+\alpha}*[C_{B_2}^*]*K_\beta^*
\\
=&\sum_{[A_1],[A_2],[B_1],[B_2]} \varphi([A_2],[B_1]*k_{\widehat{B_2}})\sqrt{(\alpha,\widehat{B_2}+\beta)\langle \widehat{A_1},\widehat{A_2}\rangle} \frac{|\Ext^1_{\ca}(A_1,A_2)_A||\aut(A)|}{|\Hom_{\ca}(A_1,A_2)||\aut(A_1)||\aut(A_2)|}\\
&\sqrt{\langle \widehat{B_1},\widehat{B_2}\rangle}\frac{|\Ext^1_{\ca}(B_1,B_2)_B|}{|\Hom_{\ca}(B_1,B_2)|}\frac{|\aut(B)|}{|\aut(B_1)||\aut(B_2)|}\frac{1}{\sqrt{(\alpha,\widehat{B_2})}}[C_{A_1}]*[K_{A_2}]
*[C_{B_2}^*]*K_\alpha*K_\beta^*
\end{align*}
\begin{align*}
=&\sum_{[A_1],[A_2],[B_1],[B_2]} \varphi([A_2],[B_1]*k_{\widehat{B_2}})\sqrt{(\alpha,\beta)}\sqrt{\langle \widehat{A_1},\widehat{A_2}\rangle} \frac{|\Ext^1_{\ca}(A_1,A_2)_A|}{|\Hom_{\ca}(A_1,A_2)|} \frac{|\aut(A)|}{|\aut(A_1)||\aut(A_2)|}\\
&\sqrt{\langle \widehat{B_1},\widehat{B_2}\rangle}\frac{|\Ext^1_{\ca}(B_1,B_2)_B|}{|\Hom_{\ca}(B_1,B_2)|}\frac{|\aut(B)|}{|\aut(B_1)||\aut(B_2)|}[C_{A_1}]*[K_{A_2}]
*[C_{B_2}^*]*K_\alpha*K_\beta^*.
\end{align*}
From the equality (\ref{equation case 4}) proved above, we obtain that
\begin{align*}
&\mbox{LHS of }(\ref{equation last identity})\\
=&\sum_{[A_1],[A_2],[B_1],[B_2]} \varphi([A_1]*k_{\widehat{A_2}},[B_2])\sqrt{(\alpha,\beta)}\sqrt{\langle \widehat{A_1},\widehat{A_2}\rangle} \frac{|\Ext^1_{\ca}(A_1,A_2)_A|}{|\Hom_{\ca}(A_1,A_2)|} \frac{|\aut(A)|}{|\aut(A_1)||\aut(A_2)|}\\
&\sqrt{\langle \widehat{B_1},\widehat{B_2}\rangle}\frac{|\Ext^1_{\ca}(B_1,B_2)_B|}{|\Hom_{\ca}(B_1,B_2)|}\frac{|\aut(B)|}{|\aut(B_1)||\aut(B_2)|}[C_{B_1}^*]*[K_{B_2}^*]*[ C_{A_2}]*K_\alpha*K_\beta^*\\
\\
=&\sum_{[A_1],[A_2],[B_1],[B_2]} \varphi([A_1]*k_{\widehat{A_2}},[B_2]*k_\beta)\frac{\sqrt{\langle \widehat{A_1},\widehat{A_2}\rangle}}{\sqrt{(\widehat{A_2},\beta)}} \frac{|\Ext^1_{\ca}(A_1,A_2)_A|}{|\Hom_{\ca}(A_1,A_2)|} \frac{|\aut(A)|}{|\aut(A_1)||\aut(A_2)|}\\
&\sqrt{\langle \widehat{B_1},\widehat{B_2}\rangle}\frac{|\Ext^1_{\ca}(B_1,B_2)_B|}{|\Hom_{\ca}(B_1,B_2)|}\frac{|\aut(B)|}{|\aut(B_1)||\aut(B_2)|}
\sqrt{(\widehat{A_2},\beta)}[C_{B_1}^*]*[K_{B_2}^*]*K_\beta^**[ C_{A_2}]*K_\alpha\\
=&\sum_{[A_1],[A_2],[B_1],[B_2]} \varphi([A_1]*k_{\widehat{A_2}},[B_2]*k_\beta)\sqrt{\langle \widehat{A_1},\widehat{A_2}\rangle} \frac{|\Ext^1_{\ca}(A_1,A_2)_A|}{|\Hom_{\ca}(A_1,A_2)|} \frac{|\aut(A)|}{|\aut(A_1)||\aut(A_2)|}\\
&\sqrt{\langle \widehat{B_1},\widehat{B_2}\rangle}\frac{|\Ext^1_{\ca}(B_1,B_2)_B|}{|\Hom_{\ca}(B_1,B_2)|}\frac{|\aut(B)|}{|\aut(B_1)||\aut(B_2)|}
[C_{B_1}^*]*K_{\widehat{{B_2}}+\beta}^**[ C_{A_2}]*K_\alpha\\
=&\mbox{RHS of }(\ref{equation last identity}).
\end{align*}
The proof is completed.
\end{proof}

The following is a corollary of Theorem \ref{theorem semi-derived hall algebra isomorphic to Drinfeld double}.
\begin{corollary}
\label{cor:drinfeld double}
Let $\ca$ be a hereditary abelian $\K$-linear category. Then the subspace $\ch_{tw}^e(\ca)\otimes \ch_{tw}^e(\ca)$ (not completed) is a subalgebra of the Drinfeld double of $\ch_{tw}^e(\ca)$, which is isomorphic to $\cs\cd\ch_{tw}(\ca)$.
\end{corollary}
\begin{proof}
Denote by $\Phi$ the isomorphism from the completed Drinfeld double of $\ch_{tw}^e(\ca)$ to the completed $\cs\cd\ch_{tw}(\ca)$ in Theorem \ref{theorem semi-derived hall algebra isomorphic to Drinfeld double}.
For any $[A_i],[B_i]\in\Iso(\ca)$, $\alpha_i,\beta_i\in K_0(\ca)$,  we obtain that
\begin{eqnarray*}
&&\Phi\big(([A_1]*k_{\alpha_1}\otimes [B_1]*k_{\beta_1})*([A_2]*k_{\alpha_2}\otimes [B_2]*k_{\beta_2})\big)
\\
&=&\Phi\big([A_1]*k_{\alpha_1}\otimes [B_1]*k_{\beta_1})*\Phi([A_2]*k_{\alpha_2}\otimes [B_2]*k_{\beta_2}\big)\\
&=&[C_{A_1}]*K_{\alpha_1}*[C_{B_1}^*]*K_{\beta_1}^**[C_{A_2}]*K_{\alpha_2}*[C_{B_2}^*]*K_{\beta_2}^*.
\end{eqnarray*}
Then by Proposition \ref{cor basis of twisted semi-derived hall algebra of A}, we have
$$[C_{A_1}]*K_{\alpha_1}*[C_{B_1}^*]*K_{\beta_1}^**[C_{A_2}]*K_{\alpha_2}*[C_{B_2}^*]*K_{\beta_2}^*= \sum_{\stackrel{\alpha,\beta\in K_0(\ca)}{ \,[A],[B]\in\Iso(\ca)}} a_{\alpha,\beta,[A],[B]} [C_A]*[C_B^*]* K_\alpha* K_\beta^*,$$
with finitely many nonzero terms.
By applying the inverse of $\Phi$, we obtain that
$$([A_1]*k_{\alpha_1}\otimes [B_1]*k_{\beta_1})*([A_2]*k_{\alpha_2}\otimes [B_2]*k_{\beta_2})= \sum_{\stackrel{\alpha,\beta\in K_0(\ca)}{ \,[A],[B]\in\Iso(\ca)}} a_{\alpha,\beta,[A],[B]} b_{\alpha,\beta,[A],[B]}[A]*k_{\alpha}\otimes [B]*k_\beta,$$
for some $b_{\alpha,\beta,[A],[B]}\in\C$.
Then $([A_1]*k_{\alpha_1}\otimes [B_1]*k_{\beta_1})*([A_2]*k_{\alpha_2}\otimes [B_2]*k_{\beta_2})\in \ch_{tw}^e(\ca)\otimes \ch_{tw}^e(\ca)$, and so $\ch_{tw}^e(\ca)\otimes \ch_{tw}^e(\ca)$ is closed under taking the multiplication of the Drinfeld double  of $\ch_{tw}^e(\ca)$.
Therefore, $\ch_{tw}^e(\ca)\otimes \ch_{tw}^e(\ca)$ is a subalgebra of the Drinfeld double  of $\ch_{tw}^e(\ca)$, which is isomorphic to $\cs\cd\ch_{tw}(\ca)$.
\end{proof}

\begin{remark}
I. Burban and O. Schiffmann \cite{BS1} proved that $\ch^e_{tw}(\ca)\otimes\ch^e_{tw}(\ca)$ (not completed) is an associative algebra satisfying the conditions (D1)-(D3) for $\ca$ the category of coherent sheaves over a weighted projective line. T. Cramer \cite{Cr} generalized their result to any hereditary abelian category $\ca$ satisfying the so-called \emph{finitary conditions} (see \cite[Definition 1]{Cr}). Corollary \ref{cor:drinfeld double} generalizes these results to arbitrary hereditary abelian categories.
\end{remark}

The following corollary follows from Theorem \ref{theorem semi-derived hall algebra isomorphic to Drinfeld double}.
\begin{corollary}
\label{cor: coherent sheaves}
Let $\ca$ be the category of finite-dimensional representations of a quiver $Q$ which may have oriented cycles, or the category of coherent sheaves on a smooth projective curve or a weighted projective line over $\K$. Then
the twisted semi-derived Ringel-Hall algebra
$\cs\cd\ch_{tw}(\ca)$ is isomorphic to the Drinfeld double of $\ch^e_{tw}(\ca)$.
\end{corollary}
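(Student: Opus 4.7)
The plan is to observe that this corollary is an immediate specialization of Theorem \ref{theorem semi-derived hall algebra isomorphic to Drinfeld double}, so all the work is in checking that each of the listed categories $\ca$ satisfies the blanket hypothesis of that theorem, namely, that $\ca$ is a hereditary abelian $k$-category which is essentially small with finite-dimensional $\Hom$ and $\Ext^1$ spaces.

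First I would verify the hypotheses case by case. For $\ca = \rep_k^{\text{nil}}(Q)$ the category of finite-dimensional nilpotent representations of a quiver $Q$ (possibly with oriented cycles) over the finite field $k=\F_q$, heredity is the standard fact that $\Ext^i$ vanishes for $i\geq 2$ on module categories of path algebras (the nilpotent condition is preserved under sub- and quotient objects and extensions, so the subcategory is closed under $\Ext^1$, and $\Ext^2$ vanishes already in the ambient module category). Essential smallness and finiteness of $\Hom$ and $\Ext^1$ follow from working with finite-dimensional objects over a finite field. For $\ca = \Coh(\X)$ the category of coherent sheaves on a smooth projective curve or weighted projective line $\X$ over $k$, heredity is classical (global dimension one), essential smallness is standard, and the finiteness of $\Hom$ and $\Ext^1$ follows from coherence together with Serre duality on $\X$.

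Next I would invoke the remark following the definition of finitary category to note that in both cases $\ca$ is in fact finitary in the sense of Cramer: for $\rep_k^{\text{nil}}(Q)$ take $d=\underline{\dim}:K_0(\ca)\to \Z^{Q_0}$ the dimension vector, and for $\Coh(\X)$ take $d([M])=(\rank(M),\deg(M))$ as recorded in the excerpt. Consequently the Drinfeld double Ringel-Hall algebra of $\ch^e_{tw}(\ca)$ already lives on $\ch^e_{tw}(\ca)\otimes \ch^e_{tw}(\ca)$ without needing any completion, so the isomorphism statement has its literal, uncompleted meaning.

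Finally, with the hypotheses verified, Theorem \ref{theorem semi-derived hall algebra isomorphic to Drinfeld double} applies verbatim and yields the desired isomorphism
$$\cm\ch_{\Z/2,tw}(\ca)\;\simeq\;\text{Drinfeld double of }\ch^e_{tw}(\ca)$$
in each of the three listed cases. Since no new calculation is required, there is no genuine obstacle; the only point requiring care is to make sure that the nilpotency restriction in the quiver case does not spoil any of the inputs to the proof of Theorem \ref{theorem semi-derived hall algebra isomorphic to Drinfeld double}, but all those inputs (Lemmas \ref{Galois functor is dense for hereditary categories}--\ref{lemma product of stalk complexes}) depend only on $\ca$ being hereditary abelian with finite $\Hom$ and $\Ext^1$, which is exactly what we checked.
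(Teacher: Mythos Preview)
Your proposal is correct and takes essentially the same approach as the paper: the paper's proof is the single sentence ``It follows from Theorem \ref{theorem semi-derived hall algebra isomorphic to Drinfeld double} immediately,'' followed by a remark that the corollary holds without completion because $\ca$ is finitary. Your additional verification of the hypotheses and of finitarity (including the explicit choices of $d$) is accurate and simply fills in what the paper leaves implicit.
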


\begin{corollary}[\cite{Y}]
Let $\ca$ be a hereditary abelian $\K$-linear category with enough projective objects. Then Bridgeland's Hall algebra
$\cd\ch(\ca)$ is isomorphic to the Drinfeld double of $\ch^e_{tw}(\ca)$.
\end{corollary}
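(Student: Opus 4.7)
The plan is to identify Bridgeland's Ringel-Hall algebra $\cd\ch(\ca)$ with the twisted modified Ringel-Hall algebra $\cm\ch_{\Z/2,tw}(\ca)$ under the present hypothesis, and then invoke Theorem \ref{theorem semi-derived hall algebra isomorphic to Drinfeld double}. Let $\cp\subseteq\ca$ denote the subcategory of projective objects. Bridgeland's $\cd\ch(\ca)$ is, by construction, the localization at classes of acyclic complexes of the Ringel-Hall algebra of $\cc_{\Z/2}(\cp)$ modulo differences $[L]-[K\oplus M]$ arising from acyclic short exact sequences of projective $\Z/2$-graded complexes; this is precisely the analogue of our $(\ch(\cc_{\Z/2}(\ca))/I_{\Z/2})[S_{\Z/2}^{-1}]$ restricted to projective components, with the same componentwise Euler-form twist.

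First, I would construct an algebra homomorphism $\Phi:\cd\ch(\ca)\to\cm\ch_{\Z/2,tw}(\ca)$ induced by the exact inclusion functor $\cc_{\Z/2}(\cp)\hookrightarrow\cc_{\Z/2}(\ca)$. Exactness makes this a homomorphism of Ringel-Hall algebras; the Bridgeland quotient relations map into $I_{\Z/2}$ because an acyclic complex of projectives remains acyclic in $\cc_{\Z/2}(\ca)$; the Bridgeland localization is absorbed because classes of acyclic complexes in $\cc_{\Z/2}(\cp)$ lie in $S_{\Z/2}$ and become invertible by Lemma \ref{corollary hall multiplicatoin of acyclic complexes}.

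Second, I would verify that $\Phi$ is bijective by a basis comparison. Since $\ca$ is hereditary with enough projectives, every object of $\ca$ admits a projective resolution of length $\le 1$, so by the mechanism of Lemma \ref{Galois functor is dense for hereditary categories} every complex in $\cc_{\Z/2}(\ca)$ is quasi-isomorphic (via a short exact sequence with acyclic kernel) to the image of some complex in $\cc_{\Z/2}(\cp)$. Combined with Proposition \ref{lemma canonical form of complex}, this shows that the PBW basis of $\cm\ch_{\Z/2,tw}(\ca)$ from Corollary \ref{theorem basis of modified hall algebra}, namely $[C_A\oplus C_B^*]*K_\alpha*K_\beta^*$ for $[A],[B]\in\Iso(\ca)$ and $\alpha,\beta\in K_0(\ca)$, is in the image of $\Phi$. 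For injectivity, one matches this with Bridgeland's PBW basis for $\cd\ch(\ca)$, indexed identically by pairs of isomorphism classes in $\ca$ and pairs of elements of $K_0(\cp)\cong K_0(\ca)$ (the latter isomorphism is precisely where the "enough projectives" hypothesis enters).

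Finally, composing $\Phi:\cd\ch(\ca)\xrightarrow{\sim}\cm\ch_{\Z/2,tw}(\ca)$ with Theorem \ref{theorem semi-derived hall algebra isomorphic to Drinfeld double} yields the isomorphism with the Drinfeld double Ringel-Hall algebra of $\ch^e_{tw}(\ca)$. The main obstacle is the bookkeeping of Euler-form twisting: one must check that the twist on $\cc_{\Z/2}(\cp)$ used in defining Bridgeland's multiplication agrees, after restriction, with the componentwise twist $\langle\cdot,\cdot\rangle_{cw}$ used to define $\cm\ch_{\Z/2,tw}(\ca)$. This can be bypassed by routing through Gorsky's $\Z/2$-graded semi-derived Hall algebra $\cs\cd\ch_{\Z/2}(\cp)$, which Gorsky proved coincides with $\cd\ch(\ca)$; this coincidence will reappear in Section 5 when a tilting object is present, and the present special case (where $\cp$ itself provides the required projective resolutions) follows by the same argument.
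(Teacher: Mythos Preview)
Your proposal is correct, and the alternative you mention at the very end is exactly the route the paper takes. The paper's proof consists of three citations: Gorsky's \cite[Corollary~9.21]{Gor13} gives $\cs\cd\ch_{\Z/2,tw}(\ca)\cong\cm\ch_{\Z/2,tw}(\ca)$ when $\ca$ has enough projectives; Gorsky's \cite[Proposition~9.23]{Gor13} gives $\cs\cd\ch_{\Z/2,tw}(\ca)\cong\cd\ch(\ca)$; and then Theorem~\ref{theorem semi-derived hall algebra isomorphic to Drinfeld double} finishes the job. Your main approach instead constructs the comparison map $\Phi$ directly from the inclusion $\cc_{\Z/2}(\cp)\hookrightarrow\cc_{\Z/2}(\ca)$ and argues bijectivity via PBW bases on both sides. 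This is more self-contained but amounts to reproving a special case of Gorsky's comparison results; the paper simply outsources that step. One small inaccuracy: Bridgeland's original definition of $\cd\ch(\ca)$ involves no quotient by an ideal of the form $I_{\Z/2}$, only the localization of $\ch(\cc_{\Z/2}(\cp))$ at acyclic classes. Your description is nonetheless harmless, since acyclic complexes of projectives are contractible and hence the relations $[L]=[K\oplus M]$ you impose are already identities in $\ch(\cc_{\Z/2}(\cp))$.
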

\begin{proof}

For such an $\ca$, by \cite[Corollary 9.21]{Gor13}, the twisted semi-derived Hall algebra defined there is isomorphic to
$\cs\cd\ch_{tw}(\ca)$. Together with
\cite[Proposition 9.23]{Gor13}, we have that the twisted semi-derived Ringel-Hall algebra
$\cs\cd\ch_{tw}(\ca)$ is isomorphic to Bridgeland's Hall algebra $\cd\ch(\ca)$. Therefore, from Theorem \ref{theorem semi-derived hall algebra isomorphic to Drinfeld double}, we also obtain that $\cd\ch(\ca)$ is isomorphic to the Drinfeld double of $\ch^e_{tw}(\ca)$.
\end{proof}

\subsection{The reduced version}

As in \cite{Br,Gor13}, we define the \emph{reduced} version of the $\cs\cd\ch_{tw}(\ca)$ by setting $[K]=1$ whenever $K$ is an acyclic complex, invariant under the shift functor:
$$\cs\cd\ch_{tw,red}(\ca):= \cs\cd\ch_{tw}(\ca)/\big([K]-1:K\in\cc_{\Z/2,ac}(\ca),K\cong K^*\big).$$
Furthermore, also as in \cite{Gor13}, this is the same as setting $K_\alpha*K_{\alpha}^*=1,\forall \alpha\in K_0(\ca)$.

\begin{definition}[see e.g. \cite{X,Jo,Cr,BS1,BS}]
Let $\ca$ be a hereditary abelian $\K$-linear category.
The \emph{double Ringel-Hall algebra} $D\ch(\ca)$ is the quotient of the Drinfeld double of $\ch_{tw}^e(\ca)$ by the ideal generated by the elements $k_\alpha\otimes1-1\otimes k_\alpha^{-1}$.
\end{definition}

Then we obtain the following corollary.

\begin{corollary}
\label{cor:SDHDH}
For a hereditary abelian category $\ca$, $\cs\cd\ch_{tw,red}(\ca)$ is isomorphic to the double Ringel-Hall algebra $D\ch(\ca)$.
\end{corollary}
\begin{proof}
It follows from Theorem \ref{theorem semi-derived hall algebra isomorphic to Drinfeld double}.
\end{proof}

\begin{remark}
The quantum loop algebra ${\bf U}_v(\cl \fg)$ was defined as a generalization of the Drinfeld's new realization
of the quantum affine algebra to the loop algebra of any Kac-Moody algebra $\fg$. It has been shown by
Burban and Schiffmann \cite{BS1,BS,Sch,Sch2} that the Ringel-Hall algebra of the category of coherent sheaves on a weighted projective line $\X$ is closely
related to the quantum loop algebra ${\bf U}_v(\cl \fg)$, for some $\fg$ with a star-shaped Dynkin diagram.
Based on their work, R. Dou, Y. Jiang and J. Xiao \cite{DJX} established a homomorphism of algebras from  Drinfeld's presentation of ${\bf U}_v(\cl \fg)$ to the double Ringel-Hall algebra of $\Coh(\X)$.

Using Corollary \ref{cor:SDHDH}, we can use $\cs\cd\ch_{tw,red}(\Coh(\X))$ to realize the Drinfeld's presentation of ${\bf U}_v(\cl \fg)$, for some $\fg$ with a star-shaped Dynkin diagram.
\end{remark}

\begin{remark}
Let ${\bf U}_v(\fg)$ be a quantum generalized Kac-Moody algebra \cite{Bor,K}. Kang and Schiffmann \cite{KS} used $\ch_{tw}({\rm rep}^{\rm nil}_\K(Q))$ to  realize one  half of ${\bf U}_v(\fg)$, where ${\rm rep}^{\rm nil}_\K(Q)$ is the category of nilpotent finite-dimensional representations over some quiver $Q$ (with loops). It follows that the double Ringel-Hall algebra of ${\rm rep}^{\rm nil}_\K(Q)$ can be used to realize the whole ${\bf U}_v(\fg)$. Using Corollary \ref{cor:SDHDH}, we can use $\cs\cd\ch_{tw,red}({\rm rep}^{\rm nil}_\K(Q))$ to realize the whole quantum generalized Kac-Moody algebra ${\bf U}_v(\fg)$.
\end{remark}

For a quiver with at least one oriented cycle, the category $\ca$ of its finite-dimensional representations over $\K$ does not have enough projective objects.
However, its Drinfeld double of the twisted extended Ringel-Hall algebra or equivalently its semi-derived Ringel-Hall algebra is still interesting (see e.g. \cite{Sch,DDF}).

%\begin{example}
%Let $\Delta(1)$ be the Jordan quiver which has only one vertex $1$ and one arrow $\alpha:1\longrightarrow 1$. Define $\ca$ to the category of finite-dimensional nilpotent %representations of $\Delta(1)$. Then $\ca$ does not have any projective object and even not have any tilting object. Its twisted Ringel-Hall algebra $\ch_{tw}(\ca)$ is isomorphic to the ring of symmetric functions \cite{Mac}. By Theorem \ref{theorem semi-derived hall algebra isomorphic to Drinfeld double}, we obtain that $\cs\cd\ch_{tw}(\ca)$ is isomorphic to the Drinfeld double Ringel-Hall algebra of $\ch^e_{tw}(\ca)$. In particular, one can prove that $\cs\cd\ch_{tw,red}(\ca)$ is a kind of $q$-deformations of the Heisenberg algebra $\mathfrak{H}_\infty$, where $\mathfrak{H}_\infty$ is the Lie algebra which has a basis $p_i,q_i$ ($i=1,2,3,\dots$) and $c$, and the vector $c$ is central, while $[p_i,q_j]=\delta_{ij}c$ for any $i,j\geq1$.
%\end{example}

\section{Hereditary categories with tilting objects}
\label{sec:tilting}
In this section, we prove the property of tilting invariance for the semi-derived Ringel-Hall algebras.
\subsection{Tilting objects}
Let $\ca$ be a Krull-Schmidt abelian $\K$-linear category with finite morphism and extension spaces. Let $(\ct,\cf)$ be a pair of full subcategories in $\ca$. Following \cite{Di}, $(\ct,\cf)$ is called a \emph{torsion pair} of $\ca$ if the following conditions are satisfied:
\begin{itemize}
\item[(i).] $\Hom_{\ca}(T,F)=0$ for all $T\in\ct$ and $F\in\cf$.
\item[(ii).] For any $X\in\ca$, there exists a short exact sequence
$$0\longrightarrow T_X\longrightarrow X\longrightarrow F_X\longrightarrow 0$$
such that $T_X\in\ct$ and $F_X\in\cf$.
\end{itemize}
For a torsion pair $(\ct,\cf)$, the following conditions are always satisfied:
\begin{itemize}
\item[(iii).] If $X\in\ca$ and $\Hom_{\ca}(T,X)=0$ for all $T\in\ct$, then $X\in\cf$.
\item[(iv).] If $X\in\ca$ and $\Hom_{\ca}(X,F)=0$ for all $F\in\cf$, then $X\in\ct$.
\end{itemize}

If $(\ct,\cf)$ is a torsion pair, then $\ct$ is called the \emph{torsion class}, and $\cf$ is called the \emph{torsion free class}. $T\in\ct$ is called a \emph{torsion object} while $F\in\cf$ is called a \emph{torsion free object}. Clearly, $\ct$ and $\cf$ are closed under extensions, $\ct$ is closed under factor objects and $\cf$ is closed under subobjects.

Following \cite{HRS}, we say that a torsion class $\ct$ is a \emph{tilting torsion class} if $\ct$ cogenerates $\ca$ (that is, for any $X\in\ca$, there exists a monomorphism $\mu_X:X\rightarrow T_X$ to  an object $T_X\in\ct$).

For an object $A\in\ca$, we denote by $\add A$ the smallest additive full subcategory of $\ca$ containing $A$, that is, the full subcategory of $\ca$ whose objects are the finite
direct sums of direct summands of the object $A$;  %we denote by $\add A$ the full subcategory of $\ca$ of direct summands of finite direct sums of copies of $A$ and
and denote by $\Fac A$ the full subcategory of $\ca$ formed by the epimorphic images of objects in $\add A$.
\begin{definition}[\cite{HRS}]\label{definition of tilting objects}
Let $T\in\ca$. Then $T$ is called a {tilting object} if there exists a torsion pair $(\ct,\cf)$ satisfying the following conditions:
\begin{itemize}
\item[(i).] $\ct$ is a tilting torsion class.
\item[(ii).] $\ct=\Fac T$.
\item[(iii).] $\Ext^i_\ca(T,X)=0$ for $X\in\ct$ and $i>0$, so $T$ is Ext-projective in $\ct$.
\item[(iv).] If $Z\in\ct$ satisfies $\Ext^i_\ca(Z,X)=0$ for all $X\in\ct$ and $i>0$, then $Z\in\add T$.
\item[(v).] If $\Ext^i_\ca(T,X)=0$ for $i\geq 0$ and $X\in\ca$, then $X=0$.
\end{itemize}
\end{definition}
This concept was introduced in \cite{HRS} to obtain a common treatment of both tilted algebras \cite{HRi} and canonical algebras \cite{R1,R3,GL}.
\begin{lemma}[\cite{HRS}]\label{lemma tilting object HRS}
Let $\ca$ be an abelian $\K$-linear category and $(\ct,\cf)$ a torsion pair.
\begin{itemize}
\item[(i).] If $\ct$ is given by a tilting object $T$, then $\Ext^i_\ca(T,-)=0$ for $i\geq2$, i.e., $\pd_\ca (T)\leq1$.

\item[(ii).] If $\ct$ is given by a tilting object $T$ and $X\in\ca$ satisfies $\Ext^1_\ca(T,X)=0$, then $X\in\ct$.

\item[(iii).] Let $T$ be a tilting object in $\ca$. Then $\cd^b(\ca)$ and $\cd^b(\Lambda)$ are derived equivalent, where $\Lambda=\End_\ca(T)^{op}$.
\end{itemize}
\end{lemma}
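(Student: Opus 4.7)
The plan is to treat the three parts separately, each exploiting the torsion pair structure together with the five defining conditions of a tilting object in Definition \ref{definition of tilting objects}.

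For part (i), I would first reduce to showing $\Ext^i_\ca(T,F)=0$ for $i\geq 2$ when $F\in\cf$. Indeed, for an arbitrary $X\in\ca$ the torsion pair gives a short exact sequence $0\to T_X\to X\to F_X\to 0$ with $T_X\in\ct$ and $F_X\in\cf$; condition (iii) of Definition \ref{definition of tilting objects} yields $\Ext^i_\ca(T,T_X)=0$ for $i>0$, so the long exact sequence reduces the claim to the torsion-free part. For $F\in\cf$, use that $\ct$ is a cogenerator to choose a monomorphism $F\hookrightarrow T'$ with $T'\in\ct$; the cokernel $Y=T'/F$ lies in $\ct$ because torsion classes are closed under factor objects. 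Now the long exact sequence associated to $0\to F\to T'\to Y\to 0$ combined with $\Ext^i_\ca(T,T')=\Ext^i_\ca(T,Y)=0$ for $i>0$ gives $\Ext^i_\ca(T,F)=0$ for all $i\geq 2$, finishing (i).

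For part (ii), suppose $\Ext^i_\ca(T,X)=0$ for all $i>0$, and let $0\to T_X\to X\to F_X\to 0$ be the torsion decomposition. Applying $\Hom_\ca(T,-)$, the vanishing of $\Ext^i_\ca(T,T_X)$ and $\Ext^i_\ca(T,X)$ for $i>0$ forces $\Ext^i_\ca(T,F_X)=0$ for $i\geq 1$. Moreover $\Hom_\ca(T,F_X)=0$ since $T\in\ct$ and $F_X\in\cf$. Thus $\Ext^i_\ca(T,F_X)=0$ for all $i\geq 0$, and condition (v) of Definition \ref{definition of tilting objects} gives $F_X=0$, whence $X=T_X\in\ct$.

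Part (iii) is the Happel--Reiten--Smal\o\ tilting theorem, so I would invoke the standard machinery rather than redo it from scratch. Set $\Lambda=\End_\ca(T)^{op}$ and consider the functor $\R\Hom_\ca(T,-):\cd^b(\ca)\to \cd^b(\Lambda)$. Parts (i) and (ii) together with condition (iv) ensure that $T$ is a tilting object in the derived sense: $\R\Hom_\ca(T,T)$ is concentrated in degree zero and equals $\Lambda$, $T$ has finite projective dimension (in fact at most one), and the smallest thick subcategory of $\cd^b(\ca)$ containing $T$ is all of $\cd^b(\ca)$ (this last point uses (v), the cogenerator property, and part (i) to control the two-term resolutions of objects of $\cf$ by objects of $\ct=\Fac T$). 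An application of the Keller/Rickard recognition criterion for tilting objects then yields that $\R\Hom_\ca(T,-)$ is an equivalence $\cd^b(\ca)\simeq \cd^b(\Lambda)$.

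The main obstacle is the final verification in (iii) that $T$ generates $\cd^b(\ca)$ as a triangulated category; parts (i) and (ii) are short diagram chases, while the generation step genuinely uses the full strength of the tilting axioms (in particular (iv) and (v)) combined with the existence of the two-step resolutions provided by the cogenerator property of $\ct$.
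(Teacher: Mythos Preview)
The paper does not prove this lemma at all: it is stated with a citation to \cite{HRS} and used as a black box. So there is no proof in the paper to compare against.

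Your arguments for (i) and (ii) are correct and are essentially the standard ones found in \cite{HRS}. The reduction in (i) via the torsion decomposition followed by the cogenerator embedding $0\to F\to T'\to Y\to 0$ with $T',Y\in\ct$ is exactly the right move, and the dimension shift in the long exact sequence gives the vanishing for $i\geq 2$. Part (ii) is likewise the intended argument: the long exact sequence forces $\Ext^i_\ca(T,F_X)=0$ for $i\geq 1$, the torsion pair axiom gives $\Hom_\ca(T,F_X)=0$, and then (v) kills $F_X$.

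For (iii) your plan is fine as an outline, but note that the generation statement requires a bit more than you indicate: one typically passes through the HRS tilt of $\ca$ with respect to $(\ct,\cf)$ and identifies the heart with $\mod\Lambda$, or else one verifies directly that every object of $\ca$ lies in the thick subcategory generated by $T$ by first resolving torsion objects by objects of $\add T$ (your two-step resolution) and then handling $F\in\cf$ via the cogenerator embedding $0\to F\to T'\to Y\to 0$ with $T',Y\in\ct$. Either way this is precisely the content of \cite{HRS}, so citing it, as the paper does, is appropriate.
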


For a  hereditary abelian $\K$-linear category with a tilting object, D. Happel gave the following characterization.
\begin{theorem}[\cite{Ha}]
Let $\ch$ be a connected hereditary abelian $\K$-linear category with a tilting object. Then $\ch$ is derived equivalent to $\mod H$ for some finite-dimensional hereditary $\K$-algebra $H$ or derived equivalent to $\Coh (\X)$ for some weighted projective line $\X$.
\end{theorem}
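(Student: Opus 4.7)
The plan is to reduce the classification to the known classification of quasi-tilted algebras (Happel--Reiten--Smal\o) via the derived equivalence provided by a tilting object, and then to recognize the two possible derived types. Let me outline the steps.

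First, fix a tilting object $T$ in $\ch$ and set $\Lambda=\End_\ch(T)^{op}$. By Lemma \ref{lemma tilting object HRS}(iii), there is a triangle equivalence $\cd^b(\ch)\simeq \cd^b(\mod\Lambda)$, so it suffices to classify $\Lambda$ up to derived equivalence. The key structural observation is that $\Lambda$ is \emph{quasi-tilted}: since $\ch$ is hereditary, $\Ext^{\geq 2}_\ch(T,-)=0$ (Lemma \ref{lemma tilting object HRS}(i)), so $\gldim\Lambda\leq 2$, and for every indecomposable $X\in\mod\Lambda$ the image of $X$ under the derived equivalence lies in $\ch$ shifted by $0$ or $-1$ (this uses that $\ch$ is the heart of a t-structure coming from the torsion pair $(\ct,\cf)$ together with HRS tilting), whence $\pd X\leq 1$ or $\id X\leq 1$. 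This is precisely the definition of quasi-tilted.

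Next I would invoke the Happel--Reiten--Smal\o\ classification: every connected quasi-tilted algebra is derived equivalent either to a finite-dimensional hereditary algebra or to a canonical algebra in the sense of Ringel \cite{R1}. In the first case one gets $\cd^b(\ch)\simeq \cd^b(\mod H)$ for a hereditary $H$, and in the second case Geigle--Lenzing's theorem \cite{GL} identifies $\cd^b(\text{canonical algebra})\simeq \cd^b(\Coh\X)$ for a weighted projective line $\X$ whose weight data is read off from the canonical algebra. Combining with the derived equivalence $\cd^b(\ch)\simeq \cd^b(\mod\Lambda)$ gives the stated dichotomy. Connectedness of $\ch$ is used to rule out mixed decompositions and to ensure $\Lambda$ itself is connected.

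The main obstacle is the Happel--Reiten--Smal\o\ classification of quasi-tilted algebras, which is itself a substantial result: proving it requires a careful analysis of the Auslander--Reiten quiver of $\Lambda$, the existence of a \emph{separating tubular family} or a \emph{complete slice}, and a case split according to whether the Ext-projective indecomposables in the torsion class exhaust a slice (hereditary/tilted case) or form a more complicated configuration governed by tubes (canonical/weighted-projective-line case). A clean alternative approach, which avoids reproving HRS from scratch, is to verify directly in $\ch$ the existence either of a complete slice (then $T$ can be replaced by a slice module and $\ch\simeq \mod H$ derived-equivalently) or of a separating family of tubes of tubular type $(p_1,\dots,p_n)$, and then recognize the latter configuration as $\Coh\X$ by invoking Lenzing's reconstruction of weighted projective lines from their categories of coherent sheaves. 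In either route, the crux is extracting enough homological information from the bare axioms of Definition \ref{definition of tilting objects} to produce the requisite Auslander--Reiten-theoretic structure.
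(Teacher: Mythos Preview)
The paper does not prove this theorem; it is quoted without proof from Happel's paper \cite{Ha}, so there is no argument in the paper to compare your proposal against.

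That said, your reduction contains a genuine circularity. You want to invoke ``the Happel--Reiten--Smal\o\ classification of quasi-tilted algebras,'' but that classification was only a \emph{conjecture} in \cite{HRS}; it was proved by Happel in \cite{Ha}, and the theorem you are trying to prove is precisely the result that establishes it. Recall that in \cite{HRS} a quasi-tilted algebra is \emph{defined} as one of the form $\End_{\ch}(T)^{op}$ for $T$ a tilting object in a hereditary abelian category $\ch$ (the homological characterization via $\pd\leq 1$ or $\id\leq 1$ is then a theorem). Hence classifying connected quasi-tilted algebras up to derived equivalence is literally the same problem as classifying connected hereditary abelian categories with tilting object up to derived equivalence. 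Your chain $\ch\to\Lambda\to$ (classification of quasi-tilted $\Lambda$) $\to\ch$ closes up into a loop.

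Happel's actual proof in \cite{Ha} works directly inside $\ch$: he studies the full subcategory $\ch_0$ of objects of finite length, uses perpendicular categories and exceptional objects to peel off pieces of $\ch$, and separates the two outcomes according to whether every object of $\ch$ has a simple subobject (leading to $\mod H$) or not (leading to $\Coh\X$). Your last paragraph gestures in this direction (slices versus separating tubular families), and that is indeed where all the content lies; the formal passage to $\Lambda$ buys you nothing.
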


%Recall that a \emph{weighted projective line} $\X=(\P^1,\bD,\bp)$ is specified by a collection of distinct points $\bD=(\lambda_1,\dots,\lambda_n)$ in the projective line $\P^1$ over a field $k$, and a \emph{weight sequence} $\bp=(p_1,\dots,p_n)$.
%Geigle and Lenzing \cite{GL} have associated to each weighted projective line $\X$ a category $\Coh\X$ of coherent sheaves, which is the quotient category of the category of finitely %generated $\bL(\bp)$-graded $S(\bp,\bD)$-modules, modulo the \emph{Serre subcategory} of finite length modules. Here $\bL(\bp)$ is the rank one additive group
%$$\bL(\bp):=\langle \vec{x}_1,\dots,\vec{x}_n,\vec{c}| p_1\vec{x}_1=\cdots= p_n\vec{x}_n=\vec{c}\rangle,$$
%and
%$$S(\bp,\bD)=k[u,v,x_1,\dots,x_n]/(x_i^{p_i}+\lambda_{i1}u-\lambda_{i0}v),$$
%with grading $\deg u=\deg v=\vec{c}$ and $\deg x_i=\vec{x}_i$, where $\lambda_i=[\lambda_{i0}:\lambda_{i1}]\in\P^1$.
%Geigle and Lenzing showed that $\Coh(\X)$ is a hereditary abelian category with a tilting object, whose endomorphism algebra is isomorphic to the \emph{canonical algebra} introduced by Ringel in \cite{R1}.

\subsection{$T$-resolutions}

Before going on, we recall some definitions from \cite{AS}.
For any Krull-Schmidt additive category $\cb$ (not necessarily hereditary) with finite-dimensional homomorphism spaces, let $\cx$ be a subcategory of $\cb$ closed under isomorphisms and direct summands. A morphism $g:B\rightarrow C$ is called \emph{right minimal} if every $\alpha\in\End(B)$ such that $g\alpha=g$ is an automorphism.
For any $C\in\cb$, a morphism $f:X\rightarrow C$ is a \emph{right $\cx$-approximation} of $C$ if $\Hom_\cb(\cx,f)$ is surjective. %$\Hom_\cb(\cx, X)\xrightarrow{} \Hom_\cb(\cx,C)\rightarrow0$ is exact, i.e., $\Hom_\cb(X', X)\rightarrow \Hom_\cb(X',C)\rightarrow0$ are exact for any $X'\in\cx$.
A right $\cx$-approximation $h:X\rightarrow C$ is a \emph{minimal right $\cx$-approximation} of $C$ if additionally $h$ is a right minimal morphism. An object $C\in\cb$ has a minimal right $\cx$-approximation if it has a right $\cx$-approximation. Any two minimal right $\cx$-approximations $f_i:X_i\rightarrow C$, $i=1,2$ are isomorphic in the sense that there is an isomorphism $g:X_1\rightarrow X_2$ such that $h_1=h_2g$.
The subcategory $\cx$ is \emph{contravariantly finite} in $\cb$ if every $C$ in $\cb$ has a right $\cx$-approximation. %Therefore, $C$ has a right $\cx$-approximation if and only if $\Hom_\cb(-,C)|_{\cx}$ is a finitely generated contravariant functor from $\cx$ to abelian groups.

Again we will use freely the notions of left $\cx$-approximations, minimal left $\cx$-approximations, and $\cx$ being covariantly finite in $\cb$, which are the duals of the notions given above. We say that $\cx$ is \emph{functorially finite} if it is both covariantly finite and contravariantly finite. It is well known that
$\add T$ is functorially finite for any object $T\in\cb$.

\begin{lemma}[Wakamatsu's Lemma] Let $\cx$ be a subcategory of an abelian category $\cb$ which is closed under extensions. For any $C\in\cb$, we have the following.
\begin{itemize}
\item[(i).] If $0\rightarrow Y\rightarrow X\stackrel{f}{\rightarrow}C $ is exact with $f$ being a right minimal $\cx$-approximation of $C$, then $\Ext^1_\cb(\cx,Y)=0$.
\item[(ii).] If $C\stackrel{g}{\rightarrow}X\rightarrow Z\rightarrow0$ is exact with $g$ being a left minimal $\cx$-approximation of $C$, then $\Ext^1_\cb(Z,\cx)=0$.
\end{itemize}
\end{lemma}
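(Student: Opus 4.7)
The plan is to handle (i) by a standard pushout-and-minimality argument, and then to deduce (ii) by straightforward dualization. I will concentrate on (i). Write $\alpha : Y \to X$ for the given monomorphism, so that $f\alpha = 0$. To prove $\Ext^1_\cb(X',Y)=0$ for a given $X'\in\cx$, I would start with an arbitrary extension $\eta : 0\to Y \xrightarrow{\beta} E \xrightarrow{q} X' \to 0$ and try to split it. The first step is to form the pushout of $\alpha$ and $\beta$, producing an object $E'$ sitting in a short exact sequence $0\to X \xrightarrow{\delta} E' \xrightarrow{q'} X' \to 0$ together with a map $\gamma : E\to E'$ making the pushout square commute. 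Since $X, X'\in\cx$ and $\cx$ is extension-closed, $E'\in\cx$.

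Next, since $f\alpha=0$, the universal property of the pushout yields a morphism $\tilde f : E'\to C$ with $\tilde f \delta = f$ and $\tilde f\gamma = 0$. Because $E'\in\cx$ and $f$ is a right $\cx$-approximation of $C$, there exists $s : E'\to X$ with $fs=\tilde f$; in particular $fs\delta=f$, so $f(s\delta-1_X)=0$, and $s\delta-1_X$ factors through $\ker f$, giving $s\delta=1_X+\alpha r$ for some $r : X\to Y$.

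Now I would invoke minimality: from $f(s\delta)=f$, right minimality of $f$ forces $s\delta=1_X+\alpha r$ to be an automorphism of $X$. The classical observation that in any associative ring $1+ab$ is a unit if and only if $1+ba$ is (with inverse $1-b(1+ab)^{-1}a$) then yields that $1_Y+r\alpha$ is an automorphism of $Y$. On the other hand, $fs\gamma=\tilde f\gamma=0$ shows $s\gamma$ factors through $\alpha$, say $s\gamma=\alpha t$ for a unique $t : E\to Y$; chasing $s\delta\alpha=s\gamma\beta$ gives $\alpha(t\beta)=\alpha(1_Y+r\alpha)$, and monicity of $\alpha$ yields $t\beta=1_Y+r\alpha$. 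Hence $(1_Y+r\alpha)^{-1}t$ is a retraction of $\beta$, splitting $\eta$, and (i) follows.

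Finally, (ii) is obtained by the precisely dual argument: form the pullback of $g$ with the epimorphism $E\to Z$, apply the covariantly finite property of $g$ as a left $\cx$-approximation, and use left minimality of $g$ together with the same $1+ab$ versus $1+ba$ trick. The main bookkeeping obstacle, which I expect to be the only source of small errors, is tracking the pushout carefully and correctly passing from the automorphism $1_X+\alpha r$ of $X$ to the corresponding automorphism $1_Y+r\alpha$ of $Y$; everything else is formal.
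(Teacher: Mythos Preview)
The paper states Wakamatsu's Lemma as a known result and gives no proof of its own, so there is nothing to compare against. Your argument is the standard one and is correct: the pushout places the extension inside $\cx$, the approximation property produces the factorization $s$, minimality forces $s\delta$ to be an automorphism, and the Jacobson-style identity $(1+ab \text{ invertible} \Leftrightarrow 1+ba \text{ invertible})$ transfers this to $1_Y+r\alpha$, giving the desired splitting. The dualization for (ii) is routine.
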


We turn to consider tilting objects in $\ca$ now. %Let $T$ be a tilting object in $\ca$. %Let $(\ct(T),\cf(T))$ (or $(\ct,\cf)$ for simplicity) be the corresponding torsion pair.% induced by $T$.

\begin{proposition}
\label{lemma resolution of torsionable object}
Let $T$ be a tilting object in a hereditary abelian $\K$-linear category $\ca$ and $(\ct,\cf)$ be the corresponding torsion pair. Then we have the following.
\begin{itemize}
\item[(i).] For any $M\in\ct$, there is an exact sequence
$$0\longrightarrow T^1\longrightarrow T^0{\longrightarrow} M\longrightarrow0$$
with $T^0,T^1\in\add T$.%, and $f$ being a right $\add T$-approximation of $M$.

\item[(ii).] %$\ct$ is a covariantly finite subcategory of $\ca$, and
For any $C\in\ca$, there is an exact sequence
$$0\longrightarrow C{\longrightarrow} X\longrightarrow T^2\longrightarrow0$$
with %$g$ being a  left $\ct$-approximation of $C$ and
$X\in\ct,T^2\in\add T$.
\end{itemize}
\end{proposition}

\begin{proof}
(i). Since $\add T$ is a functorially finite subcategory and $\ct=\Fac(T)$, there is an epimorphism $f:T^0\rightarrow M$ with $T^0\in\add T$ such that $f$ is a right minimal $\add T$-approximation of $M$. Then we obtain a short exact sequence
$$0\longrightarrow \ker f\longrightarrow T^0\xrightarrow{f} M\longrightarrow0.$$
From Wakamatsu's Lemma, we obtain that $\Ext^1_{\ca}(T,\ker f)=0$, and so $\ker f\in \ct$ by Lemma \ref{lemma tilting object HRS} (ii).
For any $X\in\ct$, we obtain a long exact sequence
$$\cdots\longrightarrow0=\Ext^1_{\ca}(T^0, X)\longrightarrow \Ext^1_{\ca}(\ker f,X)\longrightarrow \Ext^2_\ca(M,X)=0\longrightarrow\cdots.$$
So $\Ext^1_{\ca}(\ker f,X)=0$, which implies $\Ext^i_{\ca}(\ker f,\ct)=0$ for any $i>0$ since $\ca$ is hereditary. Then $\ker f\in \add T$ by Definition \ref{definition of tilting objects} (iv).

(ii). By Definition \ref{definition of tilting objects}, we know that $\ct$ is a cogenerator for $\ca$. For any $C\in\ca$, there exists a monomorphism $a:C\rightarrow X^C$ with $X^C\in\ct$, which can be completed to a short exact sequence
$$0\longrightarrow C\stackrel{a}{\longrightarrow} X^C\stackrel{b}{\longrightarrow} Y^C\longrightarrow0.$$
Since $\ct$ is closed under quotients, we have $Y^C\in\ct$.
From (i),  there exists a short exact sequence
$$0\longrightarrow T^1\longrightarrow T^0\longrightarrow Y^C\longrightarrow0$$
with $T^0,T^1\in\add T$.
So we obtain the following pullback diagram:
\[\xymatrix{ & T^1\ar@{=}[r] \ar@{.>}[d] &T^1\ar[d] \\
C\ar@{.>}[r] \ar@{=}[d] & X \ar@{.>}[r] \ar@{.>}[d] & T^0\ar[d] \\
C\ar[r] & X^C\ar[r] &Y^C  }\]
It follows from the short exact sequence in the second column that $X\in\ct$ since $T^1,X^C\in\ct$ and $\ct$ is closed under extensions.
So we obtain a short exact sequence $0\rightarrow C{\rightarrow} X\rightarrow T^0\rightarrow0$ with $X\in\ct$, $T^0\in\add T$.
%Clearly, $f$ is a left $\ct$-approximation of $C$ since $\Ext^1(T^0,\ct)=0$.
%Then $\ct$ is a covariantly finite subcategory of $\ca$.
%Let $g:C\longrightarrow X$ be a minimal left $\ct$-approximation. Then $g$ is injective, and it is completed to a short exact sequence
%$0\longrightarrow C\stackrel{g}{\longrightarrow} X\longrightarrow Y\longrightarrow0$. In particular, $Y\in\ct$.
%Wakamatsu's Lemma implies $\Ext^1(Y,\ct)=0$, and so $Y\in \add T$ by Definition \ref{definition of tilting objects} (iv).
\end{proof}

\begin{proposition}\label{proposition tilting resolution of bounded complexes}
Let $T$ be a tilting object in a hereditary abelian $\K$-linear category $\ca$ and $(\ct,\cf)$ be the corresponding torsion pair. For any $A\in \cc^b(\ca)$,  there exists a short exact sequence of complexes
$$0\longrightarrow A{\longrightarrow} X\longrightarrow T_2\longrightarrow0,$$
with $T_2$ acyclic and $X\in\cc^b(\ct)$, $T_2\in\cc^b(\add T)$.
\end{proposition}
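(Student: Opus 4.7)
The plan is to construct $X$ and $Y$ explicitly by thickening each component $A^i$ using the local approximation from Proposition 5.4(ii), with cross-terms chosen to absorb the inevitable failure of naive lifts of the differential $d^i$ to square to zero.

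First I would apply Proposition 5.4(ii) pointwise to obtain, for every $l \le i \le m$, a short exact sequence $0 \to A^i \xrightarrow{j^i} \bar X^i \xrightarrow{\pi^i} \bar T^i \to 0$ with $\bar X^i \in \ct$ and $\bar T^i \in \add T$. Since $\bar X^{i+1} \in \ct$ and $T$ is Ext-projective in $\ct$ (Definition 5.1(iii)), we have $\Ext^1_\ca(\bar T^i, \bar X^{i+1})=0$, so the composite $A^i \xrightarrow{d^i} A^{i+1} \hookrightarrow \bar X^{i+1}$ lifts along $j^i$ to a morphism $D^i_0 : \bar X^i \to \bar X^{i+1}$. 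Because $D^i_0$ extends $d^i$, the maps $\pi^{i+1}D^i_0$ and $D^{i+1}_0 D^i_0$ both vanish on $A^i$ and therefore factor through $\pi^i$ as $g^i \pi^i$ and $h^i \pi^i$ respectively, where $g^i : \bar T^i \to \bar T^{i+1}$ is the induced map on quotients and $h^i : \bar T^i \to \bar X^{i+2}$ is the obstruction to $D^\bullet_0$ being a differential.

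I would then build $X$ of the form $X^i := \bar X^i \oplus \bar T^{i-1}$ (with the convention $\bar T^{l-1}=0$ and $X^{m+1} := \bar T^m$), equipped with the block differential
\[ D^i := \begin{pmatrix} D^i_0 & -h^{i-1} \\ \pi^i & -g^{i-1} \end{pmatrix}, \]
suitably truncated at the boundaries. Direct matrix multiplication, combined with the defining equations for $g^i, h^i$ and the two derived cocycle identities $\pi^{i+2}h^i = g^{i+1}g^i$ and $D^{i+2}_0 h^i = h^{i+1}g^i$ (both obtained by expanding $D^{i+2}_0 D^{i+1}_0 D^i_0$ and $\pi^{i+2} D^{i+1}_0 D^i_0$ in two different ways and cancelling the epimorphism $\pi^i$), yields $D^{i+1}D^i = 0$. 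Each $X^i$ lies in $\ct$ since $\ct$ is closed under extensions and contains $\add T$. The componentwise map $\iota^i(a) := (j^i(a),0)$ is then easily checked to make $A \hookrightarrow X$ a morphism of complexes.

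Finally I would analyse the quotient $Y = X/A$: by construction $Y^l = \bar T^l$, $Y^i = \bar T^i \oplus \bar T^{i-1}$ for $l<i\le m$, and $Y^{m+1} = \bar T^m$, so $Y \in \cc^b(\add T)$, with induced differential in the block form $\bar D^i = \begin{pmatrix} g^i & -g^i g^{i-1} \\ 1 & -g^{i-1} \end{pmatrix}$. The constant identity in the $(2,1)$-entry makes an explicit kernel-versus-image identification transparent, showing every cohomology group of $Y$ vanishes; hence $\iota$ is a quasi-isomorphism. The main obstacle is essentially bookkeeping: choosing the signs and boundary conventions so that the two cocycle identities automatically force both $D^2 = 0$ and acyclicity of $\bar D$. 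Conceptually the resulting $X$ is a complex-level analogue of a mapping cylinder adapted to the torsion pair $(\ct,\cf)$.
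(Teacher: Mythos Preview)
Your argument is correct and genuinely different from the paper's. The paper follows Keller's inductive pushout construction \cite[Lemma 4.1]{Ke1}: it coresolves $A^0$, pushes out along $d^0$ to obtain an intermediate object $Z^1$, coresolves $Z^1$ to get $X^1$, and iterates; at each stage the new $Y^{i}$ splits as $Y^{i-1}\oplus T^i$ because $\Ext^1_\ca(T^i,Y^{i-1})=0$. In contrast, you coresolve every $A^i$ independently at the outset and then assemble a single twisted complex $X^i=\bar X^i\oplus\bar T^{i-1}$ with an explicit $2\times2$ block differential, using the obstruction $h^i$ as an off-diagonal correction. Your two cocycle identities are exactly what the associativity of composition $D^{i+2}_0D^{i+1}_0D^i_0$ forces after cancelling the epimorphism $\pi^i$, so $D^2=0$ and the acyclicity of $Y$ follow as you describe. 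The paper's method is more conceptual and handles the boundary terms without special conventions, while yours makes the global shape of $X$ and $Y$ completely explicit from the start; both rely on the same key input, namely $\Ext^1_\ca(\add T,\ct)=0$.
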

\begin{proof}
The proof is similar to \cite[Lemma 4.1]{Ke1}.
By the hypothesis and Proposition \ref{lemma resolution of torsionable object} (ii), for any $C\in\ca$, there exist $X^C\in\ct,Y^C\in\add T$ such that
$0\rightarrow C\rightarrow X^C\rightarrow Y^C\rightarrow0$ is exact. Without loss of generality, we can assume that $A$ is of the form
$$A:\cdots\longrightarrow0\longrightarrow A^0\stackrel{d^0}{\longrightarrow} A^1\stackrel{d^1}{\longrightarrow} \cdots \longrightarrow A^n\longrightarrow0\longrightarrow\cdots,\,\,A^p\in\ca.$$
We construct the resolution step by step.

Take a short exact sequence $0\rightarrow A^0\stackrel{l^0}{\rightarrow} X^0\stackrel {\pi^0}{\rightarrow} Y^0\rightarrow0$ with $X^0\in\ct$, $Y^0\in \add T$.
Then %take the pushout along the morphism $A^0\stackrel{d^0}{\rightarrow} A^1$ and $A^0\stackrel{l^0}{\rightarrow} X^0$,
we obtain the following pushout diagram on the left:
\[ \xymatrix{A^0 \ar[r]^{d^0} \ar[d]^{l_0} & A^1\ar[r]^{d^1} \ar[d]^{s^1} &A^2 & &  A^1\ar@{=}[r] \ar[d]^{s^1} &A^1\ar[d]^{l^1=t^1s^1} &\\
X^0\ar[r] \ar[d]^{\pi_0}\ar[r]^{i^0} &Z^1 \ar@{.>}[ur]_{j^1}\ar[d]^{p^1}  \ar[dr]^{t^1} & && Z^1\ar[r]^{t^1} \ar[d]^{p^1} & X^1 \ar[r] \ar@{.>}[d]^{\pi^1} &T^1 \ar@{=}[d] \\
Y^0 \ar@{=}[r]& Y^0 &X^1 & &  Y^0\ar@{.>}[r] &Y^1 \ar@{.>}[r]& T^1}\]
where $t_1:Z^1\rightarrow X^1$ fits into a short exact sequence $0\rightarrow Z^1\xrightarrow{t^1} X^1\rightarrow T^1\rightarrow0$ with
$X^1\in\ct,T^1\in\add T$. Since $d^1d^0=0$, there exists a unique morphism $j^1:Z^1\rightarrow A^2$ such that $j^1s^1=d^1$ and $j^1i^0=0$.
Consider the above pushout diagram on the right. %So $0\longrightarrow A^1\xrightarrow{l^1} X^1\xrightarrow{\pi^1} Y^1\longrightarrow0$ is a short exacts sequence.

We have $Y^1\cong Y^0\oplus T^1 \in \add T$ since
$\Ext^1_\ca(T^1,Y^0)=0$. % by the definition of tilting object.
 In this way, we obtain the following commutative diagram
\[\xymatrix{ A^0 \ar[r]^{d^0} \ar[d]^{l^0} &A^1 \ar[r]^{d^1} \ar[d]^{l^1} &A^2 \\
X^0 \ar[r]^{t^1i^0} \ar[d]^{\pi^0}& X^1 \ar[d]^{\pi^1}  &\\
Y^0\ar[r] &Y^1  &  }\]

Now consider the following pushout diagram on the left:
\[\xymatrix{ Z^1\ar[r]^{j^1} \ar[d]^{t^1}& A^2\ar[d]^{s^2} \ar[r]^{d^2}&A^3   &    A^2\ar@{=}[r] \ar[d]^{s^2} & A^2\ar[d]^{l^2=t^2s^2}& \\
X^1\ar[r]^{i^1} \ar[d]& Z^2\ar[d]^{p^2}\ar@{.>}[ur]_{j^2} && Z^2\ar[r]^{t^2} \ar[d]^{p^2}&X^2\ar[r]\ar[d] &T^2\ar@{=}[d] \\
T^1\ar@{=}[r]&T^1&  &  T^1\ar[r]& Y^2\ar[r] &T^2 }\]
Since $0=d^2d^1=d^2j^1s^1$ and $s^1$ is injective, we obtain $d^2j^1=0$, and then by the property of the pushout, there exists a unique morphism $j^2:Z^2\rightarrow A^3$ such that $ j^2s^2=d^2$ and $j^2i^1=0$.
For $Z^2$, there exists a short exact sequence $0\rightarrow Z^2\stackrel{t^2}{\rightarrow} X^2\rightarrow T^2\rightarrow0$ with $X^2\in\ct$ and $T^2\in\add T$. Then we obtain the above pushout diagram on the right. So $Y^2\cong T^1\oplus T^2\in\add T$.

In the same manner, we construct $ X^3,\dots ,X^{n-1},X^n\in \ct$ and $Y^3,\dots, Y^{n-1},Y^n\in\add T$. Then we have $Z^n\xrightarrow{t^n} X^n\rightarrow T^n$, $j^n=0:Z^n\rightarrow0$. %
Let $X^{n+1}:=T^n$ and $Y^{n+1}:=T^n$.

In this way, we obtain
two complexes
\begin{align*}
X= \cdots\longrightarrow0\longrightarrow X^0\longrightarrow \cdots \longrightarrow X^n\longrightarrow X^{n+1}\longrightarrow 0\longrightarrow\cdots,
\\
T_2= \cdots\longrightarrow0\longrightarrow Y^0\longrightarrow \cdots \longrightarrow Y^n\longrightarrow Y^{n+1}\longrightarrow 0\longrightarrow\cdots,
\end{align*}
which give rise to a short exact sequence of complexes: $0\rightarrow A\rightarrow X\rightarrow T_2\rightarrow0$.

One can verify that $A\rightarrow X$ is a quasi-isomorphism and $T_2$ is acyclic.
\end{proof}

%Let $\cc_{ac}^b({\ce})$ be the subcategory of $\cc_{\Z/2}({\ca})$ consisting of acyclic complexes.

\begin{lemma}\label{lemma tilting resolution of bounded torsion compelxes}
Let $T$ be a tilting object in a hereditary abelian $\K$-linear category $\ca$ and $(\ct,\cf)$ be the corresponding torsion pair. For any $X\in \cc^b(\ct)$, there exists a short exact sequence of complexes
$$0\longrightarrow T_1\longrightarrow T_0{\longrightarrow} X\longrightarrow0,$$
with $T_1$ acyclic and $T_0,T_1\in\cc^b(\add T)$.
\end{lemma}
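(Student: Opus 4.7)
The plan is to dualize Proposition \ref{proposition tilting resolution of bounded complexes} by constructing a termwise surjective quasi-isomorphism $Y\twoheadrightarrow X$ with $Y\in\cc^b(\add T)$ whose kernel $Z$ also lies in $\cc^b(\add T)$. The main tool is Proposition \ref{lemma resolution of torsionable object}(i), which provides, for every object of $\ct$, a length-one ``projective'' resolution by $\add T$. The hereditary plus tilting hypothesis guarantees that $\add T$ is closed under extensions in $\ca$ (since $\Ext^1(T,T)=0$), and this will be the crucial ingredient to keep the data in $\add T$ throughout the construction.

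Concretely, suppose $X$ is concentrated in degrees $\ell,\ldots,n$, and argue by descending induction on the degree. At the top, apply Proposition \ref{lemma resolution of torsionable object}(i) to $X^n\in\ct$ to get $0\to T^n_1\to Y^n\xrightarrow{p_n} X^n\to 0$ with $Y^n,T^n_1\in\add T$ and set $Z^n=T^n_1$. Inductively, having produced a surjection $p_{i+1}\colon Y^{i+1}\twoheadrightarrow X^{i+1}$ with kernel $Z^{i+1}\in\add T$ and the higher differentials $d_Y^{j}$, form the pullback
\[
\xymatrix{
P^i\ar[r]\ar[d] & Y^{i+1}\ar[d]^{p_{i+1}}\\
X^i\ar[r]^{d_X^i} & X^{i+1}\,.
}
\]
It sits in a short exact sequence $0\to Z^{i+1}\to P^i\to X^i\to 0$, so $P^i\in\ct$ by closure of $\ct$ under extensions. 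Apply Proposition \ref{lemma resolution of torsionable object}(i) to $P^i$ to obtain $0\to T^i_1\to Y^i\to P^i\to 0$ with $Y^i,T^i_1\in\add T$, and define $p_i$ and $d_Y^i$ as the two compositions through $P^i\to X^i$ and $P^i\to Y^{i+1}$. The Snake Lemma gives $0\to T^i_1\to Z^i\to Z^{i+1}\to 0$, which splits because $\Ext^1(Z^{i+1},T^i_1)=0$ (as $T$ is tilting Ext-projective in $\ct$), yielding $Z^i\in\add T$. To make $Y$ bounded I would stop at degree $\ell-1$ by noting that the pullback $P^{\ell-1}$ equals $Z^\ell\in\add T$, so one may take $Y^{\ell-1}:=Z^\ell$ with no further resolution and set $Y^j=0$ for $j<\ell-1$.

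The expected main obstacle is ensuring that the differentials $d_Y^\bullet$ really do square to zero on the nose, and that $Z=\ker p$ is acyclic (equivalently, that $p\colon Y\to X$ is a quasi-isomorphism). For the first point, note that by construction the composition $d_Y^{i+1}\circ d_Y^i$ lands in $\ker p_{i+2}=Z^{i+2}\in\add T$ since $p$ is a chain map; using that $\Ext^1(Y^i, T^{\bullet}_1)=0$ (as $Y^i\in\add T$ and $T^{\bullet}_1\in\add T$) the lifts through the short exact sequences produced by Proposition \ref{lemma resolution of torsionable object}(i) can be chosen so that this obstruction vanishes. For acyclicity of $Z$, the short exact sequence of complexes $0\to Z\to Y\to X\to 0$ and the long exact sequence in cohomology reduce it to verifying $H^*(Y)\cong H^*(X)$; this follows degree by degree from the explicit recipe, where $Y$ is nothing but the total complex of the Cartan--Eilenberg style double complex whose columns are the length-one $\add T$-resolutions of the $X^i$, so the standard spectral-sequence/filtration argument gives the quasi-isomorphism.
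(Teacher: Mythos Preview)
Your overall strategy is exactly what the paper has in mind: the paper's proof is a one-line reference to Keller's Lemma 4.1 in \cite{Ke1}, using that $\add T$ is Ext-projective in $\ct$ and Proposition~\ref{lemma resolution of torsionable object}(i), and your inductive pullback-then-resolve construction is precisely the dual of the argument written out in Proposition~\ref{proposition tilting resolution of bounded complexes}.

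There is, however, a genuine gap in your execution of the step $d_Y^{i+1}\circ d_Y^i=0$. With your pullback $P^i$ taken along $p_{i+1}\colon Y^{i+1}\to X^{i+1}$, the composite $P^i\to Y^{i+1}\xrightarrow{d_Y^{i+1}}Y^{i+2}$ need not vanish: you only know that its image lies in $Z^{i+2}$, and your claim that ``the lifts can be chosen'' to kill this obstruction is not substantiated, since the maps involved (pullback projections and resolution surjections) are essentially canonical. The remedy is to dualize Proposition~\ref{proposition tilting resolution of bounded complexes} \emph{precisely}. Having built $W^{i+1}$ (your $P^{i+1}$) as a pullback with projections $\pi_{i+1}\colon W^{i+1}\to X^{i+1}$ and $\rho_{i+1}\colon W^{i+1}\to Y^{i+2}$, use the universal property of $W^{i+1}$ (from $d_X^{i+1}d_X^i=0$) to produce $j_i\colon X^i\to W^{i+1}$ with $\pi_{i+1}j_i=d_X^i$ and $\rho_{i+1}j_i=0$; then define $W^i$ as the pullback of $j_i$ along the resolution map $q_{i+1}\colon Y^{i+1}\to W^{i+1}$, not along $p_{i+1}$. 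With this choice one gets $d_Y^{i+1}\circ(\text{proj}\colon W^i\to Y^{i+1})=\rho_{i+1}q_{i+1}\circ(\text{proj})=\rho_{i+1}j_i\circ(\text{proj to }X^i)=0$, hence $d_Y^{i+1}d_Y^i=0$ on the nose. Moreover, with this corrected construction $\ker(W^i\to X^i)=\ker q_{i+1}=T^{i+1}_1$, so $Z^i\cong T^i_1\oplus T^{i+1}_1$ and $Z$ is visibly a direct sum of contractible two-term complexes; no spectral-sequence argument is needed for the quasi-isomorphism.
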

\begin{proof}
We know that $\ct$ is closed under taking extensions, $\add T\subset\ct$, and $T$ is projective in $\ct$. Thanks to Proposition \ref{lemma resolution of torsionable object}, the same proof as in \cite[Lemma 4.1]{Ke1} applies here.
\end{proof}

\begin{proposition}
\label{theorem resolution of 2 complexes by T}
Let $T$ be a tilting object in a hereditary abelian $\K$-linear category $\ca$ and $(\ct,\cf)$ be the corresponding torsion pair. Then we have the following.
\begin{itemize}
\item[(i).] For any $M\in\cc_{\Z/2}(\ca)$, there exists a short exact sequence of complexes
$$0\longrightarrow M\stackrel{f_1}{\longrightarrow} X\stackrel{f_2}{\longrightarrow} T_2\longrightarrow0,$$
with $T_2$ acyclic and $X\in\cc_{\Z/2}(\ct)$, $T_2\in\cc_{\Z/2}(\add T)$.

\item[(ii).] For any $N\in \cc_{\Z/2}(\ct)$, there exists a short exact sequence of complexes
$$0\longrightarrow T_1\stackrel{g_1}{\longrightarrow}T_0\stackrel{g_2}{\longrightarrow} N\longrightarrow0,$$
with $T_1$ acyclic and $T_0,T_1\in\cc_{\Z/2}(\add T)$.
\end{itemize}
\end{proposition}

\begin{proof}
(i). For any $M=\xymatrix{M^0\ar@<0.5ex>[r]^{f^0} & M^1\ar@<0.5ex>[l]^{f^1}}\in\cc_{\Z/2}(\ca)$, by Lemma \ref{Galois functor is dense for hereditary categories}, there exists a short exact sequence
$$0\longrightarrow K_{\ker f^0}\longrightarrow \pi(U)\longrightarrow M\longrightarrow0$$
with $U\in\cc^b(\ca)$.
Proposition \ref{proposition tilting resolution of bounded complexes} yields an exact sequence
$$0\longrightarrow U{\longrightarrow} W\longrightarrow Y\longrightarrow0$$
with $Y$ acyclic and $W\in\cc^b(\ct)$, $Y\in\cc^b(\add T)$.
Then we obtain the following pushout diagram
\[\xymatrix{ & K_{\ker f^0} \ar[r] \ar@{=}[d] & \pi(U) \ar[r] \ar[d] & M\ar@{.>}[d] &\\
 & K_{\ker f^0} \ar[r]& \pi(W)\ar@{.>}[r] \ar[d] &X  \ar@{.>}[d] &(*) \\
& &  \pi(Y)\ar@{=}[r] &\pi(Y)   &       }\]
From the exact sequence in the second row, we have $X\in \cc_{\Z/2}(\ct)$ since $\ct$ is closed under quotients. So %the exact sequence
$$0\longrightarrow M \longrightarrow X \longrightarrow \pi(Y)\longrightarrow0$$
is the desired short exact sequence.

(ii). For any $N=\xymatrix{N^0\ar@<0.5ex>[r]^{g^0} & N^1\ar@<0.5ex>[l]^{g^1}}\in \cc_{\Z/2}(\ct)$, by Lemma \ref{Galois functor is dense for hereditary categories},  there exists a short exact sequence
$$0\longrightarrow N\longrightarrow \pi(V)\longrightarrow K_{\coker g^0}\longrightarrow0,$$
with $V\in\cc^b(\ca)$.
From Proposition \ref{proposition tilting resolution of bounded complexes}, we can assume that $V\in\cc^b(\ct)$. It follows that $K_{\coker g^0}\in\cc_{\Z/2}(\ct)$ since $\ct$ is closed under quotients. From Lemma \ref{lemma tilting resolution of bounded torsion compelxes}, there exists a short exact sequence
$$0\longrightarrow Y\longrightarrow X{\longrightarrow} V\longrightarrow0$$
with $Y$ acyclic and $X,Y\in\cc^b(\add T)$.
So we obtain the following pullback diagram
\[ \xymatrix{ \pi(Y)\ar@{=}[r] \ar@{.>}[d] &\pi(Y)\ar[d] & &\\
 Z\ar@{.>}[r] \ar@{.>}[d]  &\pi(X)\ar[d]\ar[r] & K_{\coker g^0}\ar@{=}[d]& (**)\\
 N\ar[r] &\pi(V) \ar[r] &K_{\coker g^0} & }   \]

From the short exact sequence in the first column, we obtain that $Z\in\cc_{\Z/2}(\ct)$
since $\ct$ is closed under extensions.
By applying $\Hom_\ca(-,\ct)$ to the short exact sequences in the second row, we obtain that
$Z\in\cc_{\Z/2}(\add T)$. So $0\rightarrow \pi(Y)\rightarrow Z\rightarrow N\rightarrow 0$ is our desired short exact sequence.
\end{proof}

For any $M\in\cc_{\Z/2}(\ca)$, the two short exact sequences in Proposition \ref{theorem resolution of 2 complexes by T} (i) and (ii) as a whole is called a \emph{$T$-resolution} of $M$.

\subsection{The localization of $\ch(\cc_{\Z/2}(\add T))$}

\begin{lemma}[cf. \cite{Br}]
\label{Extpvanish}
Let $T$ be a tilting object of $\ca$. For any $K,T_0\in\cc_{\Z/2}(\add T)$ with $K$ acyclic, we have
$$\Ext^{p}_{\cc_{\Z/2}(\ca)}(T_0,K)=0,\quad \Ext^{p}_{\cc_{\Z/2}(\ca)}(K,T_0)=0,\text{ for any } p\geq1.$$
\end{lemma}

\begin{proof}
Assume $K=\xymatrix{K^0\ar@<0.5ex>[r]^{d^0} & K^1\ar@<0.5ex>[l]^{d^1}}$ with $K^0,K^1\in\add T$. We have the short exact sequences
\begin{align*}
0\longrightarrow \ker d^0 \longrightarrow K^0\longrightarrow \Im d^0\longrightarrow0 \text{ and } 0\longrightarrow \ker d^1 \longrightarrow K^0\longrightarrow \Im d^1\longrightarrow0
\end{align*}
with $\Im d^0\cong \ker d^1$ and $\Im d^1\cong \ker d^0$ since $K$ is acyclic. Since $\ct$ is closed under taking quotients, we obtain that $\Im d^0,\Im d^1\in\ct$. For any $X\in\ct$, by applying $\Hom_{\ca}(-,X)$ to the above short exact sequences, we have $\Ext^i_{\ca}(\Im d^0,X)=0=\Ext^i_\ca(\Im d^1,X)$ for any $i>0$. Then $\Im d^0,\Im d^1\in\add T$ by Definition \ref{definition of tilting objects}.
It follows that $K^0\cong \Im d^0\oplus \Im d^1\cong K^1$, and then
$K\cong K_{\Im d^0}\oplus K_{\Im d^1}^*$ in $\cc_{\Z/2}(\add T)$.

The remaining proof is completely similar to Lemma \ref{lemma extension 2 zero}, and hence is omitted.
\end{proof}

\begin{lemma}%cf. [\cite[Lemma 9.4]{Gor13}]
\label{theorem extension isomorphism}
For any hereditary abelian $\K$-linear category $\ca$, let $T$ be a tilting object of $\ca$ and $(\ct,\cf)$ be its corresponding torsion pair. Then for any $P\in\cc_{\Z/2}(\add T)$, $X\in\cc_{\Z/2}(\ct)$, we have
$$\Ext^p_{\cc_{\Z/2}(\ca)}(P,X)=0,\,\,\mbox{ for any } p>0,$$
if either $P$ or $X$ is acyclic.
\end{lemma}

\begin{proof}
From Proposition \ref{theorem resolution of 2 complexes by T}, we obtain a short exact sequence
\begin{equation}
\label{eq:shortexact}
0\longrightarrow T_1\stackrel{g_1}{\longrightarrow}T_0\stackrel{g_2}{\longrightarrow} X\longrightarrow0,
\end{equation}
with $T_0\in\cc_{\Z/2}(\add T), T_1\in\cc_{\Z/2,ac}(\add T)$.
By applying $\Hom_{\cc_{\Z/2}(\ca)}(P,-)$ to \eqref{eq:shortexact}, the desired formula follows from Lemma \ref{Extpvanish}.
\end{proof}

\begin{corollary}
\label{corollary existence of minimal T-resolution}
Let $T$ be a tilting object in a hereditary abelian $\K$-linear category $\ca$ and $(\ct,\cf)$ be the corresponding torsion pair. Then $\cc_{\Z/2}(\ct)$ is a contravariantly finite
subcategory of $\cc_{\Z/2}(\ca)$ and $\cc_{\Z/2}(\add T)$ is a covariantly finite subcategory of $\cc_{\Z/2}(\ct)$.

Furthermore, we have the following.
\begin{itemize}
\item[(i).] For any $M\in\cc_{\Z/2}(\ca)$, if $f_1:M\rightarrow Z$ is a minimal left $\cc_{\Z/2}(\ct)$-approximation, then $f_1$ is injective, and $\coker f_1\in \cc_{\Z/2,ac}(\add T)$.

\item[(ii).] For any $N\in\cc_{\Z/2}(\ct)$, if $g_1:W\rightarrow N$ is a minimal right $\cc_{\Z/2}(\add T)$-approximation, then $g_1$ is surjective, and $\ker g_1\in\cc_{\Z/2,ac}(\add T)$.
\end{itemize}
\end{corollary}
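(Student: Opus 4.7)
The plan is to combine Theorem \ref{theorem resolution of 2 complexes by T}, which already produces the candidate sequences, with Theorem \ref{theorem extension isomorphism} to check the $\Hom$-lifting property, and then use Krull--Schmidt to descend from arbitrary approximations to minimal ones.

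For the existence of approximations, start with part (i). Given $M\in\cc_{\Z/2}(\ca)$, Theorem \ref{theorem resolution of 2 complexes by T}(i) supplies a short exact sequence $0\to M\xrightarrow{\iota} X\to T_2\to 0$ with $X\in\cc_{\Z/2}(\ct)$, $T_2\in\cc_{\Z/2}(\add T)$, and $\iota$ a quasi-isomorphism; in particular $T_2\in\cc_{\Z/2,ac}(\add T)$. For any $Y\in\cc_{\Z/2}(\ct)$, applying $\Hom(-,Y)$ yields the exact sequence $\Hom(X,Y)\to\Hom(M,Y)\to\Ext^1_{\cc_{\Z/2}(\ca)}(T_2,Y)$, and by Theorem \ref{theorem extension isomorphism} the rightmost term equals $\Ext^1_{\cd_{\Z/2}(\ca)}(T_2,Y)$, which vanishes because $T_2$ is acyclic. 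Hence $\iota$ is a (left) $\cc_{\Z/2}(\ct)$-approximation of $M$. Part (ii) is dual: Theorem \ref{theorem resolution of 2 complexes by T}(ii) gives $0\to T_1\to T_0\xrightarrow{\pi}N\to 0$ with $T_0,T_1\in\cc_{\Z/2}(\add T)$ and $\pi$ a quasi-isomorphism (so $T_1$ is acyclic); applying $\Hom(W,-)$ with $W\in\cc_{\Z/2}(\add T)$ and invoking Theorem \ref{theorem extension isomorphism} shows $\pi$ is a (right) $\cc_{\Z/2}(\add T)$-approximation.

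For the minimality assertions, I use the standard Krull--Schmidt splitting: since $\ca$ has finite-dimensional Hom-spaces, $\cc_{\Z/2}(\ca)$ is Krull--Schmidt, and any left (resp.\ right) approximation contains the minimal one as a direct summand of the target (resp.\ source). Concretely, for (i) write $\iota=(f_1,0)\colon M\to Z\oplus X_0$ where $f_1$ is the chosen minimal left approximation and $X_0\in\cc_{\Z/2}(\ct)$. Injectivity of $\iota$ forces injectivity of $f_1$, and taking cokernels gives $\coker(f_1)\oplus X_0\cong T_2\in\cc_{\Z/2,ac}(\add T)$. Since $\cc_{\Z/2,ac}(\add T)$ is closed under direct summands (both $\add T$ and the class of acyclic complexes are), we conclude $\coker(f_1)\in\cc_{\Z/2,ac}(\add T)$. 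Dually for (ii): write the right approximation $\pi$ as $(g_1,0)\colon W\oplus W_0\to N$; surjectivity of $\pi$ gives surjectivity of $g_1$, and $\ker(g_1)\oplus W_0\cong T_1\in\cc_{\Z/2,ac}(\add T)$, so $\ker(g_1)\in\cc_{\Z/2,ac}(\add T)$.

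The only genuinely non-routine step is the vanishing of $\Ext^1_{\cc_{\Z/2}(\ca)}$ against acyclic complexes with components in $\add T$, which is where Theorem \ref{theorem extension isomorphism} does the essential work; everything else is a formal Krull--Schmidt manipulation together with the observation that acyclicity and membership in $\add T$ both pass to direct summands.
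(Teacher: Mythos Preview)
Your proof is correct and follows essentially the same route as the paper: both use Theorem~\ref{theorem resolution of 2 complexes by T} to produce the candidate sequences, invoke Theorem~\ref{theorem extension isomorphism} for the $\Ext^1$-vanishing that gives the approximation property, and then show the cokernel (resp.\ kernel) of the minimal approximation is a direct summand of the acyclic complex $T_2$ (resp.\ $T_1$). The only cosmetic difference is that the paper spells out the summand argument by an explicit diagram chase (factoring the two approximations through each other and using minimality), whereas you invoke the Krull--Schmidt splitting of approximations directly; note also that the paper's use of ``right'' and ``contravariantly finite'' here is swapped relative to its own earlier definitions, and your ``left'' terminology is the internally consistent one.
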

\begin{proof}
For any $M\in\cc_{\Z/2}(\ca)$, Proposition \ref{theorem resolution of 2 complexes by T} (i) shows that there is a short exact sequence
\begin{equation}\label{equation corollary coresolution}
0\longrightarrow M\xrightarrow{u_1} X\xrightarrow{u_2} T_2\longrightarrow0
\end{equation}
with $X\in\cc_{\Z/2}(\ct)$, $T_2\in\cc_{\Z/2,ac}(\add T)$.
Since $\Ext^1_{\cc_{\Z/2}(\ca)}(T_2, U)=0$ for any $U\in \cc_{\Z/2}(\ct)$ by Lemma \ref{theorem extension isomorphism}, we obtain that
$u_1$ is a left $\cc_{\Z/2}(\ct)$-approximation. Thus, $\cc_{\Z/2}(\ct)$ is a contravariantly finite
subcategory of $\cc_{\Z/2}(\ca)$.

Similarly, we can prove that $\cc_{\Z/2}(\add T)$ is a covariantly finite subcategory of $\cc_{\Z/2}(\ct)$.

For (i), if $f_1:M\rightarrow Z$ is a minimal left $\cc_{\Z/2}(\ct)$-approximation, let $f_2:Z\rightarrow Z'$ be the cokernel of $f_1$.
From (\ref{equation corollary coresolution}), there exist $s_1:Z\rightarrow X$ and $s_2:X\rightarrow Z$ such that $u_1s_1=f_1$ and $s_2u_1=f_1$.
We have the following commutative diagram
\[\xymatrix{ M\ar[r]^{f_1} \ar[d]^1 &Z\ar[r]^{f_2} \ar[d]^{s_1} & Z'\ar@{.>}[d]^{t_1}\\
M\ar[r]^{u_1} \ar[d]^1 & X\ar[r]^{u_2} \ar[d]^{s_2} & T_2\ar@{.>}[d]^{t_2}\\
M\ar[r]^{f_1}& Z\ar[r]^{f_2} & Z' }\]
So there exist $t_1,t_2$ such that $t_1f_2 =u_2s_1$ and $t_2u_2=f_2s_2$.
Since $f_1$ is a left minimal morphism, we obtain that $s_2s_1$ is an isomorphism, and then $t_2t_1$ is also an isomorphism.
Thus, $Z'$ is a direct summand of $T_2$, and so $Z'\in\cc_{\Z/2,ac}(\add T)$.

The proof of (ii) is similar, and hence is omitted.
\end{proof}
For any
$M\in\cc_{\Z/2}(\ca)$, it admits a $T$-resolution:
$$0\longrightarrow M\stackrel{f_1}{\longrightarrow} X\stackrel{f_2}{\longrightarrow} T_2\longrightarrow0,\mbox{ and } 0\longrightarrow T_1\stackrel{g_1}{\longrightarrow}T_0\stackrel{g_2}{\longrightarrow} X\longrightarrow0,$$
as in Proposition \ref{theorem resolution of 2 complexes by T}. If $f_1$ is a minimal left $\cc_{\Z/2}(\ct)$-approximation, and $g_2$ is a minimal right $\cc_{\Z/2}(\add T)$-approximation, then this $T$-resolution is called  a \emph{minimal $T$-resolution}. Corollary \ref{corollary existence of minimal T-resolution} implies that a minimal $T$-resolution always exists. Clearly, the minimal $T$-resolution is unique up to isomorphisms.

\begin{proposition}
\label{lemma hall algebra of tilting objects}
Let $\ca$ be a hereditary abelian $\K$-linear category with a tilting object $T$. Let $T_{\Z/2}$ be the subset of $\ch(\cc_{\Z/2}(\add T))$ formed by all $a[K]$, where $a\in\Q^\times$, $K\in\cc_{\Z/2,ac}(\add T)$.
Then
$T_{\Z/2}$ is a multiplicatively closed, right Ore and right reversible subset of $\ch(\cc_{\Z/2}(\add T))$. Moreover, the right localization of $\ch(\cc_{\Z/2}(\add T))$ with respect to $T_{\Z/2}$ exists, and will be denoted by
$\ch(\cc_{\Z/2}(\add T))[T_{\Z/2}^{-1}]$.
\end{proposition}

\begin{proof}
For any complexes $K\in\cc_{\Z,ac}(\add T)$ and $M\in\cc_{\Z/2}(\add T)$, Lemma \ref{theorem extension isomorphism} yields that
$$[K]\diamond [M]=\frac{1}{\langle [K],[M]\rangle}[K\oplus M], \quad [M]\diamond [K]=\frac{1}{\langle [M],[K]\rangle}[K\oplus M].$$
So $$[K]\diamond [M]=\frac{\langle [M],[K]\rangle}{\langle [K],[M]\rangle}[M]\diamond [K].$$
Then one can check that $T_{\Z/2}$ is a multiplicatively closed, right Ore and right reversible subset of $\ch(\cc_{\Z/2}(\add T))$.
\end{proof}

\subsection{Tilting invariance}
In this subsection, we prove that $\ch(\cc_{\Z/2}(\add T))[T_{\Z/2}^{-1}]$ is isomorphic to the semi-derived Ringel-Hall algebra $\cs\cd\ch(\ca)$ as algebras.

By Proposition \ref{theorem resolution of 2 complexes by T}, any complex $M\in\cc_{\Z/2}(\ca)$ admits a $T$-resolution:
\begin{equation}\label{equation torsion resolution}
0\longrightarrow M\xrightarrow{w}X\xrightarrow{v} T_2\longrightarrow0 \mbox{ and }
0\longrightarrow T_1\xrightarrow{u_1} T_0\xrightarrow{u_0} X\longrightarrow0.
\end{equation}

Now, we can prove the main result of this section.

\begin{theorem}\label{proposition isomorphism of semi derived hall algebras }
Let $\ca$ be a hereditary abelian $\K$-linear category with a tilting object $T$. Then there exists an algebra homomorphism
$$\phi:\ch(\cc_{\Z/2}(\add T)) \longrightarrow \ch(\cc_{\Z/2}(\ca)).$$
\begin{itemize}
\item[(i).] The homomorphism $\phi$ induces an algebra isomorphism
$$\widetilde{\phi}:\ch(\cc_{\Z/2}(\add T))[T_{\Z/2}^{-1}] \xrightarrow{\sim} \cs\cd\ch(\ca).$$

\item[(ii).] The inverse of $\widetilde{\phi}$ is given by
$$\widetilde{\psi}: [M]\mapsto\frac{1}{\langle [M],[T_1\oplus T_{2}]\rangle}[T_0]\diamond [T_1\oplus T_2]^{-1},$$
where $T_0,T_1,T_2$ are the objects in any $T$-resolution of $M$ of the form \emph{(\ref{equation torsion resolution})}.
\end{itemize}
\end{theorem}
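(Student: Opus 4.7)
\textbf{Proof plan for Theorem \ref{proposition isomorphism of semi derived hall algebras }.} First I would construct $\phi$ as the linear map induced by the inclusion of exact categories $\cc_{\Z/2}(\add T)\hookrightarrow \cc_{\Z/2}(\ca)$, sending $[M]\mapsto [M]$. To see this is a ring homomorphism, the key point is that $\add T$ is extension-closed in $\ca$: if $0\to T'\to X\to T''\to0$ with $T',T''\in\add T$, then $X\in\ct$ because $\ct$ is closed under extensions and $\add T\subseteq \ct$; moreover $\Ext^1_\ca(X,\ct)=0$ by the long exact sequence together with $\Ext^1_\ca(\add T,\ct)=0$, so $X\in\add T$ by Definition \ref{definition of tilting objects}(iv). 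Extension-closedness passes componentwise to $\cc_{\Z/2}(\add T)\subseteq \cc_{\Z/2}(\ca)$, so for any $L,N\in \cc_{\Z/2}(\add T)$ the middle terms in either Hall product coincide, and the structure constants agree. Since any $q[K]\in T_{\Z/2}$ has $K\in\cc_{\Z/2,ac}(\add T)\subseteq \cc_{\Z/2,ac}(\ca)$, its image $\phi(q[K])\in S_{\Z/2}$ is invertible in $\cm\ch_{\Z/2}(\ca)$, and by Lemma \ref{lemma hall algebra of tilting objects} plus the universal property of Ore localization we obtain a unique ring map $\widetilde\phi$.

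Next, for surjectivity of $\widetilde\phi$, I would use the $T$-resolution (\ref{equation torsion resolution}). Since $w$ and $g_2=u_0$ are quasi-isomorphisms by Theorem \ref{theorem resolution of 2 complexes by T}, both $T_1$ and $T_2$ are acyclic, hence $T_1\oplus T_2\in\cc_{\Z/2,ac}(\add T)$. Using Lemma \ref{corollary hall multiplicatoin of acyclic complexes} on each SES: the sequence $0\to M\to X\to T_2\to0$ gives $[X]=\langle [M],[T_2]\rangle\,[M]\diamond[T_2]$, the sequence $0\to T_1\to T_0\to X\to 0$ gives $[T_0]=\langle [T_1],[X]\rangle\,[T_1]\diamond[X]$, and commuting $[T_1]$ past $[M]$ and combining $[T_1]\diamond[T_2]$, together with $\langle[T_1],[X]\rangle=\langle[T_1],[M]\rangle\langle[T_1],[T_2]\rangle$ and the twist identity of Lemma \ref{corollary hall multiplicatoin of acyclic complexes}, collapses everything to
\[
[T_0]\;=\;\langle [M],[T_1\oplus T_2]\rangle\,[M]\diamond[T_1\oplus T_2]
\]
in $\cm\ch_{\Z/2}(\ca)$. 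Rearranging gives $[M]=\widetilde\phi\bigl(\tfrac{1}{\langle[M],[T_1\oplus T_2]\rangle}[T_0]\diamond[T_1\oplus T_2]^{-1}\bigr)$, so $\widetilde\phi$ is surjective and simultaneously pins down the only candidate for the inverse, namely the formula (ii).

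To build $\widetilde\psi$, I would first show the assignment $[M]\mapsto \frac{1}{\langle[M],[T_1\oplus T_2]\rangle}[T_0]\diamond[T_1\oplus T_2]^{-1}$ is independent of the choice of $T$-resolution. Using Corollary \ref{corollary existence of minimal T-resolution}, a minimal $T$-resolution exists and is unique up to isomorphism; any other $T$-resolution is obtained from the minimal one by direct-summing each SES with a contractible complex from $\cc_{\Z/2,ac}(\add T)$ (one checks this by the usual splitting argument for right approximations). The formula is then invariant because direct-summing by an acyclic $K\in\cc_{\Z/2,ac}(\add T)$ replaces $T_0\rightsquigarrow T_0\oplus K$ and $T_1\oplus T_2\rightsquigarrow T_1\oplus T_2\oplus K$, and the Euler-form prefactors (using bilinearity and Lemma \ref{corollary hall multiplicatoin of acyclic complexes}) cancel exactly. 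Next, I would extend $\widetilde\psi$ linearly and show it annihilates the ideal defining $\cm\ch_{\Z/2}(\ca)$: for $0\to K\to L\to N\to0$ with $K$ acyclic, choose a $T$-resolution of $N$ and splice in an acyclic-$\add T$ resolution of $K$ to produce compatible $T$-resolutions of $L$ and $K\oplus N$ for which the assigned elements obviously coincide. Then $\widetilde\psi$ descends to $\cm\ch_{\Z/2}(\ca)$.

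Finally, $\widetilde\phi\circ\widetilde\psi=\id$ is exactly the computation above, and $\widetilde\psi\circ\widetilde\phi=\id$ follows by applying $\widetilde\psi$ to $[N]$ for $N\in\cc_{\Z/2}(\add T)$ using the trivial $T$-resolution ($T_2=0$, $X=N$, $T_1=0$, $T_0=N$), together with well-definedness. The main technical obstacle is the independence in Step 3--well-definedness under change of $T$-resolution--since it requires a careful minimality/comparison argument using Corollary \ref{corollary existence of minimal T-resolution} and a precise tracking of Euler-form factors; once this is established, the algebra isomorphism is immediate.
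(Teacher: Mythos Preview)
Your approach matches the paper's in outline: build $\phi$ from the exact embedding, localize, construct an explicit inverse via $T$-resolutions, and verify the two composites are identities. The surjectivity computation and the identification of the inverse formula are done the same way.

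There is one genuine gap in your Step~4. Checking $\psi([L])=\psi([K\oplus N])$ for each short exact sequence $0\to K\to L\to N\to 0$ with $K$ acyclic only shows that the \emph{linear} map $\psi$ annihilates the linear subspace $I'_{\Z/2}$ spanned by the differences $[L]-[K\oplus N]$, not the two-sided ideal $I_{\Z/2}$ they generate---and $\psi$ is not a priori multiplicative, so you cannot pass from generators of the ideal to the ideal itself. The paper resolves this by (a) additionally verifying that $\psi$ intertwines the $\A_{\Z/2,ac}(\ca)$-bimodule action, i.e.\ $\psi([M]\diamond[K])=\psi([M])\diamond\psi([K])$ and $\psi([K]\diamond[M])=\psi([K])\diamond\psi([M])$ for $K$ acyclic, so that $\psi$ extends to the tensor-product localization $(\ch(\cc_{\Z/2}(\ca))/I'_{\Z/2})[S_{\Z/2}^{-1}]$; and then (b) invoking Theorem~\ref{proposition ideal}, which identifies this bimodule with $\cm\ch_{\Z/2}(\ca)$. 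You should insert these two steps; neither is long, but both are necessary for $\widetilde\psi$ to be defined on $\cm\ch_{\Z/2}(\ca)$.

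A smaller point on Step~3: your comparison of an arbitrary $T$-resolution with the minimal one is slightly oversimplified. The two short exact sequences in (\ref{equation torsion resolution}) change by \emph{different} acyclic summands $T_2''$ and $T_1''$; the crucial fact, obtained from a $3\times 3$ diagram, is the relation $T_0''\cong T_1''\oplus T_2''$, which is what makes a single acyclic $K=T_0''$ appear simultaneously in $T_0$ and in $T_1\oplus T_2$. You have correctly flagged this as the main technical obstacle, but the argument is a bit more than ``the usual splitting for right approximations.''
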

\begin{proof}
Clearly, $\add T$ is a full and extension-closed subcategory of $\ca$. So $\cc_{\Z/2}(\add T)$ is a full subcategory of $\cc_{\Z/2}(\ca)$, which is closed under extensions. %We have
%$$\Hom_{\cc_{\Z/2}(\add T)}(X,Y)= \Hom_{\cc_{\Z/2}(\ca)}(X,Y),\quad \forall X,Y\in \cc_{\Z/2}(\add T),$$
%and
%$$\Ext^1_{\cc_{\Z/2}(\add T)}(X,Y)_Z=\Ext^1_{\cc_{\Z/2}(\ca)}(X,Y)_Z,\quad \forall X,Y\in \cc_{\Z/2}(\add T),Z\in\cc_{\Z/2}(\ca).$$
Therefore, there exists an algebra monomorphism $\phi: \ch(\cc_{\Z/2}(\add T)) \rightarrow \ch(\cc_{\Z/2}(\ca))$.
Since the full embedding $\cc_{\Z/2}(\add T)\rightarrow \cc_{\Z/2}(\ca)$ is exact and maps acyclic complexes to acyclic ones,  $\phi$ induces an algebra homomorphism
$$\widetilde{\phi}:\ch(\cc_{\Z/2}(\add T))[T_{\Z/2}^{-1}] \longrightarrow (\ch(\cc_{\Z/2}(\ca))/I_{\Z/2})[S_{\Z/2}^{-1}].$$

We define $$\psi:\ch(\cc_{\Z/2}(\ca))\longrightarrow \ch(\cc_{\Z/2}(\add T))[T_{\Z/2}^{-1}]$$ by
$$[M]\mapsto \frac{1}{\langle [M],[T_1\oplus T_{2}]\rangle}[T_0]\diamond [T_1\oplus T_2]^{-1},$$
where $T_0,T_1,T_2$ are the objects in any $T$-resolution of $M$ of the form (\ref{equation torsion resolution}).
We claim that $\psi$ is independent of the $T$-resolution of $M$.

Let us prove the claim. For any minimal $T$-resolution
$$0\longrightarrow M\xrightarrow{w'}X'\xrightarrow{v'} T'_2\longrightarrow0,
\mbox{ and }0\longrightarrow T'_1\xrightarrow{u_1'} T'_0\xrightarrow{u_0'} X'\longrightarrow0,$$
there is a commutative
diagram
\[\xymatrix{M\ar[r]^{w'} \ar@{=}[d] & X' \ar[r]^{v'} \ar[d]^{g} & T'_2 \ar[d]^f& \\
M \ar[r]^{w} &X\ar[r]^{v} &T_2&}\]
which is a both pushout and pullback diagram. Clearly, $g$ and $f$ are sections with isomorphic cokernels. We denote each of these cokernels by $T_2''$. Then
$T_2''\in\cc_{\Z/2,ac}(\add T)$, and
$$T_2\cong T_2'\oplus T_2'',\quad X\cong X'\oplus T_2''.$$

Similarly, there exists the following commutative diagram
\[\xymatrix{T_1'\ar[r]^{u_1'} \ar[d]^{h_1} & T_0' \ar[r]^{u_0'} \ar[d]^{h_0} &X' \ar[d]^{g}\\
T_1 \ar[r]^{u_1} &T_0\ar[r]^{u_0} &X}\]
Then $h_0$ and $h_1$ are sections. Denote by $T_1''$ and $T_0''$ the cokernels of $h_1$ and $h_0$, respectively.
So
$$T_0\cong T_0'\oplus T_0'',\mbox{ and } T_1\cong T_1'\oplus T_1''.$$
Furthermore, by the above commutative diagram, there exists a short exact sequence of acyclic complexes: $0\rightarrow T_1''\rightarrow T_0''\rightarrow T_2''\rightarrow0$, which yields that
$T_0''\cong T_1''\oplus T_2''$.

Then we obtain in $\ch(\cc_{\Z/2}(\add T))[T_{\Z/2}^{-1}]$ that
\begin{eqnarray*}
&&\frac{1}{\langle [M],[T_1\oplus T_2]\rangle} [T_0]\diamond [T_1\oplus T_2]^{-1}\\
&=& \frac{1}{\langle [M],[T_1'\oplus T_1''\oplus T'_2\oplus T_2'']\rangle} [T_0'\oplus T_0'']\diamond [T_1'\oplus T_1''\oplus T'_2\oplus T_2'']^{-1}\\
&=&\frac{1}{\langle [M],[T_1'\oplus T_1''\oplus T'_2\oplus T_2'']\rangle} [T_0'\oplus T_1''\oplus T_2'']\diamond [T_1'\oplus T_1''\oplus T'_2\oplus T_2'']^{-1}\\
&=&\frac{1}{\langle [M],[T_1'\oplus T_1''\oplus T'_2\oplus T_2'']\rangle}  \frac{\langle [T_0'], [T_1''\oplus T_2'']\rangle}{\langle [T_1'\oplus T_2''],[T_1''\oplus T_2'']\rangle}
[T_0']\diamond [T_1''\oplus T_2''] \diamond [T_1''\oplus T_2'']^{-1}\diamond [T_1'\oplus T_2']^{-1}\\
&=&\frac{1}{\langle [M],[T_1'\oplus T_1''\oplus T'_2\oplus T_2'']\rangle} \langle [M],  [T_1''\oplus T_2''] \rangle[T_0']\diamond[T_1'\oplus T_2']^{-1}\\
&=&\frac{1}{\langle [M],[T_1'\oplus T'_2]\rangle}[T_0']\diamond[T_1'\oplus T_2']^{-1}.
\end{eqnarray*}
So $\psi$ is independent of the $T$-resolution of $M$.

For any short exact sequence
$$0\longrightarrow K\xrightarrow{a^1} L\xrightarrow{a^2} M\longrightarrow0$$
in $\cc_{\Z/2}(\ca)$ with $K$ acyclic, we have the following $T$-resolutions:
\begin{eqnarray}
&&\label{equation T-resolution of K}0\longrightarrow K\stackrel{b^1}{\longrightarrow} X^K\stackrel{b^2}{\longrightarrow} T_2^K\longrightarrow0, \mbox{ and } 0\longrightarrow T_1^K\stackrel{c^1}{\longrightarrow} T_0^K\stackrel{c^2}{\longrightarrow} X^K\longrightarrow0,\\
&&\label{equation T-resolution of M}0\longrightarrow M\stackrel{d^1}{\longrightarrow} X^M\stackrel{d^2}{\longrightarrow} T_2^M\longrightarrow0,\mbox{ and }0\longrightarrow T_1^M\stackrel{e^1}{\longrightarrow} T_0^M\stackrel{e^2}{\longrightarrow} X^M\longrightarrow0.
\end{eqnarray}
Note that $X^K,T_0^K$ are acyclic complexes.
Applying $\Hom_{\cc_{\Z/2}(\ca)}(-,K)$ to the short exact sequence $0\rightarrow M\stackrel{d^1}{\rightarrow} X^M\stackrel{d^2}{\rightarrow} T_2^M\rightarrow0$, Proposition \ref{proposition extension 2 zero} yields a surjective map $\Ext^1_{\cc_{\Z/2}(\ca)}(X^M,K)\rightarrow \Ext^1_{\cc_{\Z/2}(\ca)}(M,K)$. %\rightarrow \Ext^2(T_2^M,K)=0$.
Thus, there exists $X'$ such that the following diagram commutes:
\[\xymatrix{ K\ar@{=}[r] \ar[d]^{a^1}& K \ar@{.>}[d]^{f_1}  &\\
L\ar@{.>}[r]^{g_1} \ar[d]^{a^2} & X' \ar@{.>}[r]^{g_2} \ar@{.>}[d]^{f_2} & T_2^M \ar@{=}[d]\\
M\ar[r]^{d^1} &X^M \ar[r]^{d^2} & T_2^M}\]
Furthermore, we have the following pushout diagram:
\[ \xymatrix{K\ar[r] \ar[d]^{f_1} & X^K \ar[r] \ar@{.>}[d]^{j_1} & T_2^K\ar@{=}[d]\\
X'\ar@{.>}[r]^{h_1}\ar[d]^{f_2} &X^L \ar@{.>}[r]^{h_2} \ar@{.>}[d]^{j_2} & T_2^K  \\
X^M\ar@{=}[r] &X^M& }\]
From it, we also obtain the following pushout diagram:
\[\xymatrix{ L\ar@{=}[r] \ar[d]^{g_1}  &L\ar[d]^{h_1g_1}& \\
X'\ar[r]^{h_1} \ar[d]^{f_1} & X^L \ar[r]^{h_2} \ar@{.>}[d] & T_2^K \ar@{=}[d]\\
T_2^M\ar@{.>}[r] &W \ar@{.>}[r]& T_2^K}\]
Then $0\rightarrow T_2^M\rightarrow W\rightarrow T_2^K\rightarrow0$ is split since $\Ext^1_{\cc_{\Z/2}(\ca)}(T_2^K,T_2^M)=0$, and so $W\cong T_2^K\oplus T_2^M$. Thus,
$0\rightarrow L\rightarrow X^L\rightarrow T_2^K\oplus T_2^M\rightarrow0$ is exact with $T_2^K\oplus T_2^M\in\cc_{\Z/2,ac}(\add T)$.

We obtain the following commutative diagram of short exact sequences:
\[ \xymatrix{   K \ar[r]^{a^1} \ar[d]^{b^1} &L\ar[r]^{a^2} \ar[d]^{h_1g_1} & M\ar[d]^{d^1} \\
X^K\ar[r]^{j_1} \ar[d]^{b^2} & X^L \ar[r]^{j_2} \ar[d] & X^M \ar[d]^{d^2}\\
T_2^K \ar[r] & T_2^K\oplus T_2^M\ar[r] & T_2^M } \]
Since $X^K$ is acyclic, Lemma \ref{theorem extension isomorphism} shows that $\Ext^1_{\cc_{\Z/2}(\ca)}(T_0^M,X^K)=0$. So we can
construct the following commutative diagram whose  middle row is short exact:
\[ \xymatrix{ T_1^K \ar[r]^{c^1} \ar@{.>}[d]  & T_0^K \ar[r]^{c^2} \ar@{.>}[d] & X^K \ar[d]^{j_1} \\
T_1^K\oplus T_1^M \ar@{.>}[r] \ar@{.>}[d] & T_0^K\oplus T_0^M \ar@{.>}[r] \ar@{.>}[d] & X^L \ar[d]^{j_2} \\
T_1^M\ar[r]^{e^1} & T_0^M \ar[r]^{e^2} & X^M  }\]
Therefore,
\begin{eqnarray*}
\psi([L])&=& \frac{1}{\langle [L],[T_1^K\oplus T_1^M\oplus T_2^K\oplus T_2^M]\rangle} [T_0^K\oplus T_0^M]\diamond [T_1^K\oplus T_1^M\oplus T_2^K\oplus T_2^M]^{-1}\\
&=& \psi([K\oplus M]).
\end{eqnarray*}
In this way, $\psi$ induces a linear map
$$\ch(\cc_{\Z/2}(\ca))/J_{\Z/2} \longrightarrow \ch(\cc_{\Z/2}(\add T))[T_{\Z/2}^{-1}].$$

For any complex $M$ and  acyclic complex $K$, we assume that their $T$-resolutions are as in (\ref{equation T-resolution of K})--(\ref{equation T-resolution of M}).
Then
\begin{align*}
&\psi([M])\diamond \psi([K])\\
=&\frac{1}{\langle [M],[ T_1^M\oplus T_2^M]\rangle\langle [K], [T_1^K\oplus T_2^K]\rangle} [T_0^M]\diamond [T_1^M\oplus T_2^M]^{-1} \diamond [T_0^K]\diamond [T_1^K\oplus T_2^K]^{-1}\\
=&\frac{\langle [T_1^M\oplus T_2^M],[T_0^K]\rangle}{\langle [M], [T_1^M\oplus T_2^M]\rangle\langle [K], [T_1^K\oplus T_2^K]\rangle\langle [T_0^K],[T_1^M\oplus T_2^M]\rangle}\\
& [T_0^M]\diamond  [T_0^K]\diamond [T_1^M\oplus T_2^M]^{-1} \diamond[T_1^K\oplus T_2^K]^{-1}\\
=&\frac{\langle [T_1^M\oplus T_2^M],[T_0^K]\rangle\langle [T_1^K\oplus T_2^K],[T_1^M\oplus T_2^M]\rangle}{\langle [M], [T_1^M\oplus T_2^M]\rangle\langle [K], [T_1^K\oplus T_2^K]\rangle\langle [T_0^K],[T_1^M\oplus T_2^M]\rangle\langle[ T_0^M],[T_0^K]\rangle} \\
&[T_0^M\oplus T_0^K]\diamond [T_1^M\oplus T_2^M\oplus T_1^K\oplus T_2^K]^{-1}\\
=&\frac{1}{\langle [M],[K]\rangle\langle [M\oplus K], [T_1^M\oplus T_2^M\oplus T_1^K\oplus T_2^K]\rangle} [T_0^M\oplus T_0^K]\diamond [T_1^M\oplus T_2^M\oplus T_1^K\oplus T_2^K]^{-1}\\
=&\frac{1}{\langle [M],[K]\rangle}\psi([M\oplus K])\\
=&\psi([M]\diamond[K]),
\end{align*}
where $[M]\diamond [K]$ is defined in  $\ch(\cc_{\Z/2}(\ca))/J_{\Z/2}$ since it is an $\A_{\Z/2,ac}(\ca)$-bimodule.
Similarly, we can prove that $\psi([K]\diamond[M])=\psi([K])\diamond \psi([M])$.

Furthermore, for any $K\in\cc_{\Z/2,ac}(\ca)$, $\psi([K])$ is invertible in $\ch(\cc_{\Z/2}(\add T))[T_{\Z/2}^{-1}]$, and so
$\psi$ induces a morphism of $\T_{\Z/2,ac}(\ca)$-bimodules
$$\widetilde{\psi}:\cm_{\Z/2}(\ca) \longrightarrow \ch(\cc_{\Z/2}(\add T))[T_{\Z/2}^{-1}].$$
In fact,
$$\widetilde{\psi}(s_1^{-1}\otimes a \otimes s_2^{-1})=\psi(s_1)^{-1}\psi(a)\psi(s_2)^{-1},\mbox{ for any }s_1^{-1}\otimes a \otimes s_2^{-1}\in \cm_{\Z/2}(\ca).$$

By Proposition \ref{proposition ideal}, the natural projection $\widehat{\Upsilon}:\ch(\cc_{\Z/2}(\ca))\rightarrow \ch(\cc_{\Z/2}(\ca))/J_{\Z/2}$ induces
an isomorphism $\cs\cd\ch(\ca)\xrightarrow{\sim}\cm_{\Z/2}(\ca)$.
Then there is a linear map from $\cs\cd\ch(\ca)$ to $\ch(\cc_{\Z/2}(\add T))[T_{\Z/2}^{-1}]$, which is also denoted by $\widetilde{\psi}$.

Clearly, $\widetilde{\psi}\circ\widetilde{\phi}=\id$. On the other hand, for any $M\in\cc_{\Z/2}(\ca)$, we have
\begin{eqnarray*}
[M]&=&\frac{1}{\langle [M] ,[T_2^M]\rangle}[X^M]\diamond [T_2^M]^{-1}\\
&=&\frac{1}{\langle [M] ,[T_2^M]\rangle\langle [X^M],[T_1^M]\rangle}[T_0^M]\diamond [T_1^M]^{-1}\diamond [T_2^M]^{-1}\\
&=&\frac{\langle [T_2^M],[T_1^M]\rangle}{\langle [M] ,[T_2^M]\rangle\langle [X^M],[T_1^M]\rangle }[T_0^M]\diamond [T_1^M\oplus T_2^M]^{-1}\\
&=&\frac{1}{\langle [M] ,[T_1^M\oplus T_2^M]\rangle }[T_0^M]\diamond [T_1^M\oplus T_2^M]^{-1}
\end{eqnarray*}
in $\cs\cd\ch(\ca)$.
So $\widetilde{\phi}\circ\widetilde{\psi}=\id$. It follows that $\widetilde{\phi}$ is an isomorphism with $\widetilde{\psi}$ being its inverse. The proof is completed.
\end{proof}

As a corollary, we prove the tilting invariance of the semi-derived Ringel-Hall algebras. % under derived equivalences induced by tilting objects.

\begin{corollary}\label{corollary invariance of derived equivalence}
Let $\ca$ be a hereditary abelian $\K$-linear category with a tilting object $T$. If $\Lambda=\End(T)^{op}$ is a hereditary algebra, then
$\cs\cd\ch(\ca)\simeq \cs\cd\ch(\mod \Lambda)$. In particular, the Drinfeld double of $\ch^e_{tw}(\ca)$ is isomorphic to that of $\ch^e_{tw}(\mod\Lambda)$.
\end{corollary}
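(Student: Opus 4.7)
The strategy is to apply Theorem~\ref{proposition isomorphism of semi derived hall algebras } twice---once to $\ca$ with the given tilting object $T$ and once to $\mod\Lambda$ with the canonical tilting object $\Lambda$---and to bridge the two localized Ringel--Hall algebras of complexes of Ext-projectives via the classical equivalence $F:=\Hom_\ca(T,-):\add T\xrightarrow{\sim}\proj\Lambda$ sending $T$ to $\Lambda$. Since $T$ is Ext-projective in $\ct$, every short exact sequence in $\add T$ (resp.\ in $\proj\Lambda$) splits, so both inherited exact structures are split. The induced functor $\cc_{\Z/2}(F):\cc_{\Z/2}(\add T)\to\cc_{\Z/2}(\proj\Lambda)$ is then an equivalence of the resulting componentwise split exact categories, and in particular yields an algebra isomorphism $\phi:\ch(\cc_{\Z/2}(\add T))\xrightarrow{\sim}\ch(\cc_{\Z/2}(\proj\Lambda))$.

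The key point is that $\phi$ carries the multiplicative set $T_{\Z/2}$ of scalar multiples of classes of acyclic complexes onto its counterpart $T'_{\Z/2}$ for $\mod\Lambda$. By Lemma~\ref{lemma tilting object HRS}(iii), Happel's tilting equivalence $\cd^b(\ca)\simeq\cd^b(\mod\Lambda)$ descends to an equivalence $\cd_{\Z/2}(\ca)\simeq\cd_{\Z/2}(\mod\Lambda)$ extending $\cc_{\Z/2}(F)$; since acyclicity in $\cc_{\Z/2}(\ca)$ is equivalent to being a zero object of $\cd_{\Z/2}(\ca)$ (and similarly for $\mod\Lambda$), the functor $\cc_{\Z/2}(F)$ both sends acyclic complexes to acyclic complexes and reflects acyclicity. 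Consequently $\phi$ extends to an isomorphism of right localizations $\ch(\cc_{\Z/2}(\add T))[T_{\Z/2}^{-1}]\cong\ch(\cc_{\Z/2}(\proj\Lambda))[{T'_{\Z/2}}^{-1}]$, and chaining this with the two invocations of Theorem~\ref{proposition isomorphism of semi derived hall algebras } gives
$$\cm\ch_{\Z/2}(\ca)\cong\cm\ch_{\Z/2}(\mod\Lambda).$$

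For the Drinfeld double assertion I would check that the chain of isomorphisms above is compatible with the componentwise Euler twist defining $\cm\ch_{\Z/2,tw}(-)$: for complexes with terms in $\add T$, the form $\langle\cdot,\cdot\rangle_{cw}$ computed in $\ca$ agrees under $F$ with the one computed in $\mod\Lambda$, because $F$ is fully faithful and higher Ext's vanish on $\add T$ and on $\proj\Lambda$. This upgrades the isomorphism to $\cm\ch_{\Z/2,tw}(\ca)\cong\cm\ch_{\Z/2,tw}(\mod\Lambda)$, and Theorem~\ref{theorem semi-derived hall algebra isomorphic to Drinfeld double} applied to both sides yields the desired isomorphism of the Drinfeld double Ringel--Hall algebras of $\ch^e_{tw}(\ca)$ and $\ch^e_{tw}(\mod\Lambda)$. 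The main technical obstacle is the acyclicity-preservation step above; an alternative, more hands-on route is to show directly that any $\Z/2$-graded acyclic complex with components in $\add T$ is contractible (since $\ca$ is hereditary and $\add T$ is Ext-projective in $\ct$, the two short exact sequences $0\to \ker d^i\to K^i\to \Im d^i\to 0$ split in a compatible way), and then to exploit that contractibility is preserved by any additive equivalence.
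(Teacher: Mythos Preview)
Your proposal is correct and follows the same strategy as the paper: apply Theorem~\ref{proposition isomorphism of semi derived hall algebras } on both sides and bridge via the equivalence $\add T\simeq\proj\Lambda$ of (split) exact categories, then invoke Theorem~\ref{theorem semi-derived hall algebra isomorphic to Drinfeld double} for the Drinfeld double statement. You are more careful than the paper on two points it leaves implicit---that the equivalence carries $T_{\Z/2}$ onto its counterpart (your contractibility argument works: since $\Im d^i\in\ct$ and $\Ext^1(\Im d^i,\ct)=0$ by hereditariness and Definition~\ref{definition of tilting objects}(iii), condition~(iv) forces $\Im d^i\in\add T$, whence both kernel--image sequences split), and that the componentwise Euler twist is respected by $F$.
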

\begin{proof}
Since $\ca$ and $\mod\Lambda$ are hereditary categories, it follows from Theorem \ref{proposition isomorphism of semi derived hall algebras } that
$$\cs\cd\ch(\ca)\cong \ch(\cc_{\Z/2}(\add T))[T_{\Z/2}^{-1}].$$
Let $\cp$ be the exact category of $\mod\Lambda$ formed by the finitely generated projective modules.
Let $R_{\Z/2}$ be the subset of $\ch(\cc_{\Z/2}(\add \Lambda))$ formed by all $a[K]$, where $a\in \Q^\times$, $K\in\cc_{\Z/2,ac}(\cp)$.
By the same arguments as in the proof of Proposition \ref{lemma hall algebra of tilting objects}, the right localization of $\ch(\cc_{\Z/2}(\cp))$ with respect to $R_{\Z/2}$ exists, and will be denoted by $\ch(\cc_{\Z/2}(\cp))[R_{\Z/2}^{-1}]$.
Theorem \ref{proposition isomorphism of semi derived hall algebras } implies that
$$\cs\cd\ch(\mod\Lambda)\cong\ch(\cc_{\Z/2}(\cp))[R_{\Z/2}^{-1}].$$
Since $\add T$ is equivalent to $\cp$ as exact categories, we have
$$\ch(\cc_{\Z/2}(\add T))[T_{\Z/2}^{-1}]\cong \ch(\cc_{\Z/2}(\cp))[R_{\Z/2}^{-1}].$$
By combining all these isomorphisms, we obtain that $\cs\cd\ch(\ca)\simeq \cs\cd\ch(\mod \Lambda)$.

The last statement follows from Theorem \ref{theorem semi-derived hall algebra isomorphic to Drinfeld double}.
\end{proof}

\subsection{Relations to Bridgeland's Hall algebras}
In this subsection, we consider the relations among the semi-derived Ringel-Hall algebras, the $\Z/2$-graded semi-derived Hall algebras defined by Gorsky, and Bridgeland's Hall algebras.

We recall the definition of $\Z/2$-graded semi-derived Hall algebra from \cite{Gor13}.
Let $\ce$ be an exact category satisfying the following conditions:
\begin{itemize}
\item[(C1)] ${\ce}$ is essentially small, idempotent complete and linear over some ground field $\K$.
\item[(C2)] For each pair of objects $A,B\in\ce$ and for each $p>0$, we have
$|\Ext^p_\ce(A,B)|<\infty$, $|\Hom_\ce(A,B)|<\infty$.
\item[(C3)] For each pair of objects $A,B\in\ce$, there exists $N>0$ such that for all $p>N$, we have
$\Ext^p_\ce(A,B)=0$.
\item[(C4)] ${\ce}$ has enough projectives, and each object has a finite projective resolution.
\end{itemize}

Let $\cp$ denote the full subcategory of the projective objects of ${\ce}$. Denote by $\widetilde{\ce}$ the closure with respect to extensions and quasi-isomorphism
classes of the full subcategory of all stalk complexes inside $\cc_{\Z/2}(\ce)$.
Define the {\em left relative Grothendieck monoid} $M_0'(\widetilde{{\ce}})$ as the free monoid generated by the set $\Iso(\widetilde{{\ce}})$, modulo the following set of relations:
$$\{ [L]=[K\oplus M]\mid \exists\text{ an exact sequence }0\rightarrow K\rightarrow L\rightarrow M\rightarrow0\mbox{ with } K\in\cc_{\Z/2,ac}({\ce})\}.$$
%Similarly, define the {\em left relative Grothendieck group} $K_0'(\widetilde{{\ce}})$ as the free group generated by the set $\Iso(\widetilde{
%{\ce}})$, modulo the following set of relations:
%$$\{ [K]-[L]+[M]=0\mid K\rightarrowtail L\twoheadrightarrow M \mbox{ is a conflation},  K\in\cc_{\Z/2,ac}({\ce})\}.$$
The quantum affine space $\A_{\Z/2,ac}({\ce})$ and the quantum torus $\T_{\Z/2,ac}({\ce})$ are defined in the same way as in Subsection \ref{subsec:quantum torus}.

Consider the $\Q$-vector space $\cm'_{\Z/2}({\ce})$ with a basis parametrized by the elements of $M_0'(\widetilde{{\ce}})$.
Define an $\A_{\Z/2,ac}({\ce})$-bimodule structure on $\cm'_{\Z/2}({\ce})$ by setting
$$[K]\diamond[M]:=\frac{1}{\langle [K],[M]\rangle}[K\oplus M],\quad [M]\diamond [K]:=\frac{1}{\langle [M],[K]\rangle} [M\oplus K]$$
for $K\in\cc_{\Z/2,ac}({\ce}),M\in\widetilde{{\ce}}$; cf. \eqref{definition of bimodule}. Then
$$\cm_{\Z/2}({\ce}):= \T_{\Z/2,ac}({\ce})\otimes_{\A_{\Z/2,ac}({\ce})}\cm'_{\Z/2}({\ce})\otimes_{\A_{\Z/2,ac}({\ce})}\T_{\Z/2,ac}({\ce})$$
is a bimodule over the quantum torus $\T_{\Z/2,ac}({\ce})$; cf. \eqref{def:bimodule}.

\begin{definition}[\cite{Gor13}]
\label{def:semiderivedGor}
We endow $\cm_{\Z/2}({\ce})$ with the following multiplication: for $[L],[M]\in\Iso(\widetilde{{\ce}})$,
\begin{equation}
[L]\diamond [M]=\frac{1}{\langle [A_L],[L]\rangle} [A_L]^{-1}\diamond \sum_{X\in\Iso(\widetilde{{\ce}})}(\frac{|\Ext^1_{\widetilde{{\ce}}}(P_L,M)_X|}{|\Hom_{\widetilde{{\ce}}}(P_L,M)|}[X]),
\end{equation}
where $A_L\rightarrowtail P_L{\twoheadrightarrow}L$ is an arbitrary conflation with $A_L\in \cc_{\Z/2,ac}({\ce}),P_L\in\widetilde{\cp}$. We call the resulting algebra the
$\Z/2$-graded semi-derived Hall algebra $\cs\cd\ch_{\Z/2}({\ce})$.
\end{definition}
It is proved in \cite{Gor13} that $\cs\cd\ch_{\Z/2}({\ce})$ is an associative unital algebra. Similarly to \eqref{eq:twistedprod}, one can define the twisted $\Z/2$-graded semi-derived Hall algebra (over $\C$) for $\ce$, which is denoted by $\cs\cd\ch_{\Z/2,tw}({\ce})$.
For the exact category $\add T$, its $\Z/2$-graded semi-derived Hall algebra satisfies
$$\cs\cd\ch_{\Z/2}(\add T)\cong \ch(\cc_{\Z/2}(\add T))[T_{\Z/2}^{-1}],$$
where $T_{\Z/2}$ is the set of all classes of acyclic $\Z/2$-graded complexes in $\cc_{\Z/2}(\add T)$; see \cite[Corollary 9.18]{Gor13}.

Let $\Lambda:=\End(T)^{op}$, and let $\cd\ch(\Lambda)$ be its Bridgeland's Hall algebra \cite{Br}. In order to avoid confusion, in the following, we change the notation of our version of semi-derived Ringel-Hall algebra to be $\cm\ch(\ca)$; correspondingly, we use $\cm\ch_{tw}(\ca)$ to denote its twisted version.
\begin{corollary}\label{corollary isomorphic of hall algebras}
Let $\ca$ be a hereditary abelian $\K$-linear category with a tilting object $T$ and $\Lambda=\End(T)^{op}$. Then there exist algebra isomorphisms
$$\cd\ch(\Lambda)\simeq\cs\cd\ch_{\Z/2,tw}(\mod\Lambda ) \xleftarrow{\sim}\cs\cd\ch_{\Z/2,tw}(\add T) \xrightarrow{\sim}\cm\ch_{tw}(\ca).$$
\end{corollary}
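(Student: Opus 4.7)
My plan is to chain together three isomorphisms, using Theorem \ref{proposition isomorphism of semi derived hall algebras } as the bridge and then invoking Gorsky's results for the middle and leftmost arrows. Throughout, write $B = \End(T)^{op}$ and let $\cp_B \subset \mod B$ denote the full exact subcategory of finitely generated projective $B$-modules.

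\textbf{The rightmost isomorphism} $\cs\cd\ch_{\Z/2,tw}(\add T) \xrightarrow{\sim}\cm\ch_{\Z/2,tw}(\ca)$. The exact category $\add T$ satisfies Gorsky's conditions (C1)--(C4) because $T$ is Ext-projective in $\ct = \Fac T$, so $\add T$ serves as its own projective part and each of its objects is its own finite projective resolution. By the reasoning used in Corollary \ref{corollary isomorphic of hall algebras} together with \cite[Corollary 9.21]{Gor13}, one has $\cs\cd\ch_{\Z/2}(\add T) \cong \ch(\cc_{\Z/2}(\add T))[T_{\Z/2}^{-1}]$. Composing with the algebra isomorphism of Theorem \ref{proposition isomorphism of semi derived hall algebras } gives the untwisted statement. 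Since the componentwise twist on both $\cs\cd\ch_{\Z/2,tw}(\add T)$ and $\cm\ch_{\Z/2,tw}(\ca)$ is defined by the same formula $\sqrt{\langle \widehat{M^0},\widehat{N^0}\rangle\langle \widehat{M^1},\widehat{N^1}\rangle}$ on classes of complexes, and the underlying bijection preserves components up to the direct-sum structure, the isomorphism descends to the twisted versions.

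\textbf{The middle isomorphism} $\cs\cd\ch_{\Z/2,tw}(\mod B) \xleftarrow{\sim} \cs\cd\ch_{\Z/2,tw}(\add T)$. The tilting functor $\Hom_\ca(T,-)$ restricts to an exact equivalence $\add T \xrightarrow{\sim} \cp_B$, sending $T$ to $B$, and this equivalence is isometric for the Euler form (both sides compute $\Hom$ and $\Ext$ as dimensions of the same spaces). It thus induces an algebra isomorphism $\cs\cd\ch_{\Z/2,tw}(\add T) \cong \cs\cd\ch_{\Z/2,tw}(\cp_B)$. On the other hand, $\mod B$ satisfies Gorsky's conditions (C1)--(C4): global dimension of $B$ is at most $2$ by the standard consequence of Lemma \ref{lemma tilting object HRS} (iii) that a tilting object in a hereditary abelian category has endomorphism algebra of global dimension at most $2$, so every $B$-module has a finite projective resolution. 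Gorsky's definition of $\cs\cd\ch_{\Z/2,tw}(\mod B)$ is built using resolutions in $\cp_B$, and this built-in identification gives $\cs\cd\ch_{\Z/2,tw}(\cp_B) \cong \cs\cd\ch_{\Z/2,tw}(\mod B)$.

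\textbf{The leftmost isomorphism} $\cd\ch(\mod B) \cong \cs\cd\ch_{\Z/2,tw}(\mod B)$ is the content of \cite[Proposition 9.23]{Gor13}, applied to the category $\mod B$, which has enough projectives as just noted.

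The principal obstacle is the middle step: one has to be sure that Gorsky's semi-derived Hall algebra $\cs\cd\ch_{\Z/2,tw}(\mod B)$ really is isomorphic, not merely related, to $\cs\cd\ch_{\Z/2,tw}(\cp_B)$, since these are defined on different exact categories with the same projective subcategory. This will follow because the semi-derived Hall algebra only uses the projective resolution machinery and the acyclic complexes in the projective part, both of which are intrinsic to $\cp_B$ once one works modulo quasi-isomorphisms. The remaining verification is bookkeeping: that the tilting equivalence $\add T \simeq \cp_B$ preserves the componentwise Euler form (it does, being an exact equivalence), and that the Bridgeland twist on $\cd\ch(\mod B)$ agrees with the componentwise twist inherited from $\cs\cd\ch_{\Z/2,tw}(\mod B)$, which is exactly what \cite[Proposition 9.23]{Gor13} establishes.
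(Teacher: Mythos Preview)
Your approach is essentially the same as the paper's: invoke Gorsky's results for the two left arrows and Theorem~\ref{proposition isomorphism of semi derived hall algebras } (via the identification $\cs\cd\ch_{\Z/2}(\add T)\cong \ch(\cc_{\Z/2}(\add T))[T_{\Z/2}^{-1}]$) for the rightmost one; the paper simply cites \cite[Corollary 9.18--9.19]{Gor13} for the first two isomorphisms rather than unpacking the middle one through $\add T\simeq\cp_B$ as you do. One small slip: you refer to ``the reasoning used in Corollary~\ref{corollary isomorphic of hall algebras}'' inside its own proof --- presumably you meant the discussion preceding the corollary or simply \cite[Corollary 9.21]{Gor13}.
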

\begin{proof}
The first two isomorphisms follow from \cite[Corollary 9.18-9.19]{Gor13}, and the last one follows from Theorem \ref{proposition isomorphism of semi derived hall algebras } since $\cs\cd\ch_{\Z/2}(\add T)\cong \ch(\cc_{\Z/2}(\add T))[T_{\Z/2}^{-1}]$.
\end{proof}

%Now, we consider the coherent sheaves on a weighted projective lines over the finite field $\K=\F_q$, and then get the following corollary directly.
\begin{corollary}\label{corollary semi derived Ringel-Hall algebra of canonical algebra isomorphic to weighted projective line}
Let $\ca$ be the category of coherent sheaves on a weighted projective line over $\K$, and $T$ be the tilting object with $\Lambda=\End(T)^{op}$ the canonical algebra. Then there exists an algebra isomorphism
$$\cd\ch(\mod\Lambda)\simeq\cs\cd\ch_{\Z/2,tw}(\mod \Lambda)\xrightarrow{\sim} \cm\ch_{tw}(\ca).$$
\end{corollary}

\begin{corollary}
Let $\ca$ be the category of coherent sheaves on a weighted projective line over $\K$. Let $\Lambda$ be the canonical algebra of the same type, i.e., there exists a tilting object $T$ in $\ca$ such that $\Lambda\cong \End(T)^{op}$. Then
the twisted semi-derived Hall algebra
$\cs\cd\ch_{\Z/2,tw}(\mod\Lambda)$ and also Bridgeland's Hall algebra $\cd\ch(\mod \Lambda)$ are isomorphic to the Drinfeld double of $\ch^e_{tw}(\ca)$.
\end{corollary}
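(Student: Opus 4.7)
The plan is to assemble this corollary directly from the two main results already established. The category $\ca = \Coh(\X)$ of coherent sheaves on a weighted projective line over $\F_q$ is a hereditary abelian $k$-category with finite-dimensional Hom and Ext spaces and, by Geigle--Lenzing, admits a tilting object $T$ whose endomorphism algebra is the canonical algebra $\Lambda$ of the same type. Moreover, $\ca$ satisfies the finitary condition of Cramer (with $d([M]) = (\rank(M), \deg(M))$), so the Drinfeld double of $\ch^e_{tw}(\ca)$ lives in the uncompleted tensor product.

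First I would invoke Corollary~\ref{corollary semi derived Hall algebra of canonical algebra isomorphic to weighted projective line}, which already provides the chain of algebra isomorphisms
\[
\cd\ch(\mod\Lambda)\ \simeq\ \cs\cd\ch_{\Z/2,tw}(\mod\Lambda)\ \xrightarrow{\sim}\ \cm\ch_{\Z/2,tw}(\ca).
\]
This reduces the statement to identifying $\cm\ch_{\Z/2,tw}(\ca)$ with the Drinfeld double of $\ch^e_{tw}(\ca)$.

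Second, I would apply Theorem~\ref{theorem semi-derived hall algebra isomorphic to Drinfeld double}, whose hypothesis is merely that $\ca$ be a hereditary abelian $k$-category essentially small with finite-dimensional Hom and Ext spaces; this yields an isomorphism between $\cm\ch_{\Z/2,tw}(\ca)$ and the Drinfeld double Ringel-Hall algebra of $\ch^e_{tw}(\ca)$. Composing the two isomorphism chains gives the claimed result.

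No step here requires a nontrivial new argument, so there is no real obstacle: the content was all absorbed into Theorem~\ref{theorem semi-derived hall algebra isomorphic to Drinfeld double} (the computation with the Hopf pairing and the Drinfeld relation) and into Corollary~\ref{corollary semi derived Hall algebra of canonical algebra isomorphic to weighted projective line} (the tilting identification via Theorem~\ref{proposition isomorphism of semi derived hall algebras }). The only point worth a brief mention is that, because $\ca$ is finitary in the sense of Cramer, the identification with the Drinfeld double holds in the uncompleted setting and no completion is needed, as noted in the remark following the proof of Theorem~\ref{theorem semi-derived hall algebra isomorphic to Drinfeld double}.
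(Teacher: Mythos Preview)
Your proposal is correct and follows essentially the same approach as the paper: the paper's own proof is a single sentence invoking exactly Corollary~\ref{corollary semi derived Hall algebra of canonical algebra isomorphic to weighted projective line} and Theorem~\ref{theorem semi-derived hall algebra isomorphic to Drinfeld double}. Your additional remark about the finitary condition (so no completion is needed) is a reasonable elaboration but not a departure from the paper's argument.
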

\begin{proof}
It follows from Corollary \ref{corollary semi derived Ringel-Hall algebra of canonical algebra isomorphic to weighted projective line} and Theorem \ref{theorem semi-derived hall algebra isomorphic to Drinfeld double}.
\end{proof}

\end{document}